\DeclareMathOperator \re {Re}
\DeclareMathOperator \im {Im}
\def \pr0{\mathcal{P}_0}
\newtheorem{thm}{Theorem}
\newtheorem{lem}{Lemma}
\newtheorem{prop}[lem]{Proposition}
\newtheorem{cor}[lem]{Corollary}
\theoremstyle{definition}
\newtheorem{defin}{Definition}
\numberwithin{equation}{section}
\numberwithin{lem}{section}
\newcommand{\Pe}{\mathcal P_e}
\newcommand{\supp} {\operatorname{supp}}
\newcommand{\rank} {\operatorname{rank}}
\newcommand{\loc}{\operatorname{loc}}
\newcommand{\Real}{\mathbb{R}}
\newcommand{\Complex}{\mathbb{C}}
\newcommand{\Integers}{\mathbb{Z}}
\newcommand{\bp}{\mathbb{B}}
\newcommand{\Natural}{\mathbb{N}}
\newcommand{\bpr}{\mathbb{B}_{r_1}}
\newcommand{\mch}{\mathcal{H}}
\newcommand{\mcg}{\mathcal{G}}
\newcommand{\fc}{c}
\newcommand{\ve}{\varepsilon}
\title[Low energy resolvent expansions in dimension two]{Low energy resolvent expansions in dimension two}
\author[T. J. Christiansen and K. Datchev]{T. J. Christiansen and K. Datchev}
\address{Department of Mathematics, University of Missouri, Columbia, MO 65211 USA}
\email{christiansent@missouri.edu}
\address{Department of Mathematics, Purdue University, West Lafayette, IN 47907 USA}
\email{kdatchev@purdue.edu}
\begin{document}
\begin{abstract}
The behavior of the resolvent at low energies has  implications for many kinds of asymptotics, including for the scattering matrix and phase, for the Dirichlet-to-Neumann map, and for wave evolution. In this paper we present a robust method, based in part on resolvent identity arguments following Vodev and boundary pairing arguments following Melrose, for deriving such expansions, and implement it in detail  for 
compactly supported perturbations of the Laplacian on $\Real^2$.  We obtain precise results for general self-adjoint black box perturbations, in the sense of Sj\"ostrand--Zworski, and also for some non-self-adjoint ones.  The most important terms are the most singular ones, and we compute them in detail, relating them to spaces of zero eigenvalues and resonances.
\end{abstract}
\maketitle

\section{Introduction}

This paper proves resolvent asymptotics for two-dimensional Euclidean scattering in the low energy limit. We begin  with some   important consequences of our expansions. %In Section~\ref{s:exapp},

\subsection{Examples and applications}\label{s:exapp} The simplest concrete scatterers to which our general results apply are Schr\"odinger potentials and obstacles. In this section, we introduce these examples, and discuss wave decay rates, the role of logarithmic capacity, and perturbations of zero-energy resonances and eigenvalues.

\subsubsection{Wave expansions} \label{s:wave} Let $f \in C_c^\infty(\mathbb R^2)$, and let $w$ solve the wave equation
\[
( \partial_t^2 +P) w(x,t) = 0 ,\qquad  w(x,0) = 0, \ \partial_t w(x,0) = f(x),
\]
where, to begin with, we consider the following two kinds of operators $P$:
\begin{enumerate}
 \item The free Laplacian $P=-\Delta$. Then,  by explicit calculation, we have
\begin{equation}\label{e:freew}w(x,t) = \frac 1 {2\pi t} \int \Big(1 - \frac{|x-x'|^2}{t^2}\Big)^{-\frac 12} f(x')dx' = \frac 1 {2\pi t} \int f + O(t^{-3}),\end{equation} 
uniformly for large $t$, and for $x$ varying in a fixed compact set. 

\vspace{3mm}

\item  The Schr\"odinger operator  $P = -\Delta+V$, with $V \in L^\infty(\mathbb R^2)$  nontrivial, nonnegative and compactly supported. Then \eqref{e:freew} is replaced by
\begin{equation}\label{e:w2}
 w(x,t)=   \frac 1 {2\pi t (\log t)^2} \Big(\int U_{\log} f\Big)U_{\log}(x) + O(t^{-1}(\log t)^{-3}),
\end{equation}
where $U_{\log}$ obeys $PU_{\log}=0$ and $U_{\log}(x) \sim \log|x|$ as $|x| \to \infty$. 
\end{enumerate}
 
Observe that, in the free case, the only bounded solutions to $Pu=0$ are constants. Further, the leading term of \eqref{e:freew} decays like $t^{-1}$, with coefficient given by a sort of projection onto constants. 
By contrast, in the perturbed case, there are no bounded solutions to $Pu=0$, and the leading term of \eqref{e:w2} has the faster decay rate $t^{-1}(\log t)^{-2}$. It is given by a sort of projection onto $U_{\log}$. %The existence and uniqueness of $U_{\log}$ is obtained in Theorem~\ref{t:resexp} below for more general perturbations $P$ when there are no bounded solutions to $Pu=0$, and Section \ref{s:schr} has more on the case $P = -\Delta +V$. 

We say that the free case $P=-\Delta$ has a resonance at zero because of the existence of bounded solutions to $Pu=0$, while for nonnegative $V$ the perturbed case $P = -\Delta+ V$ has  no resonance at zero because of the nonexistence of such solutions; see Definition \ref{d:0res} below. This accounts for the difference between \eqref{e:freew} and \eqref{e:w2}.

The wave asymptotic \eqref{e:w2} follows from the corresponding  resolvent asymptotic \eqref{e:no0resser} below, via Stone's formula for the wave propagator in terms of the resolvent. See \cite{CDY2}, where more general and complete wave asymptotics are also discussed. More specifically, Stone's formula allows us to write $w(t,x)$ as an oscillatory integral of the resolvent $(P - \lambda^2)^{-1}$, and $t \to +\infty$ asymptotics of $w$ correspond to $\lambda \to 0$ asymptotics of the resolvent.

This correspondence holds for more general operators, including metric and obstacle scattering operators. A particularly nice case, thanks to recent semiclassical estimates of Vacossin \cite{vac,vac0}, is scattering by several convex obstacles, depicted in Figure \ref{f:bumpob}. Let $\mathcal O = \bigcup_{j=1}^J \mathcal O_j$, where the $\mathcal O_j$ are convex compact subsets of $\mathbb R^2$ with smooth boundary satisfying   the Ikawa no-eclipse condition: if $i$, $j$, and $k$ are all different, then $ \mathcal O_i$ does not intersect the convex hull of $ \mathcal O_j \cup \mathcal O_k$. Let $P$ be the Dirichlet Laplacian on $\Omega = \mathbb R^2 \setminus  \mathcal O$, and let $f \in C_c^\infty(\Omega)$. Then \eqref{e:w2} holds, where  $\Delta U_{\log}=0$ in $\Omega$, $U_{\log}|_{\partial \Omega} = 0$, and $U_{\log}(x) \sim \log|x|$ as $|x| \to \infty$.

\begin{figure}[ht]
\includegraphics[width=15cm]{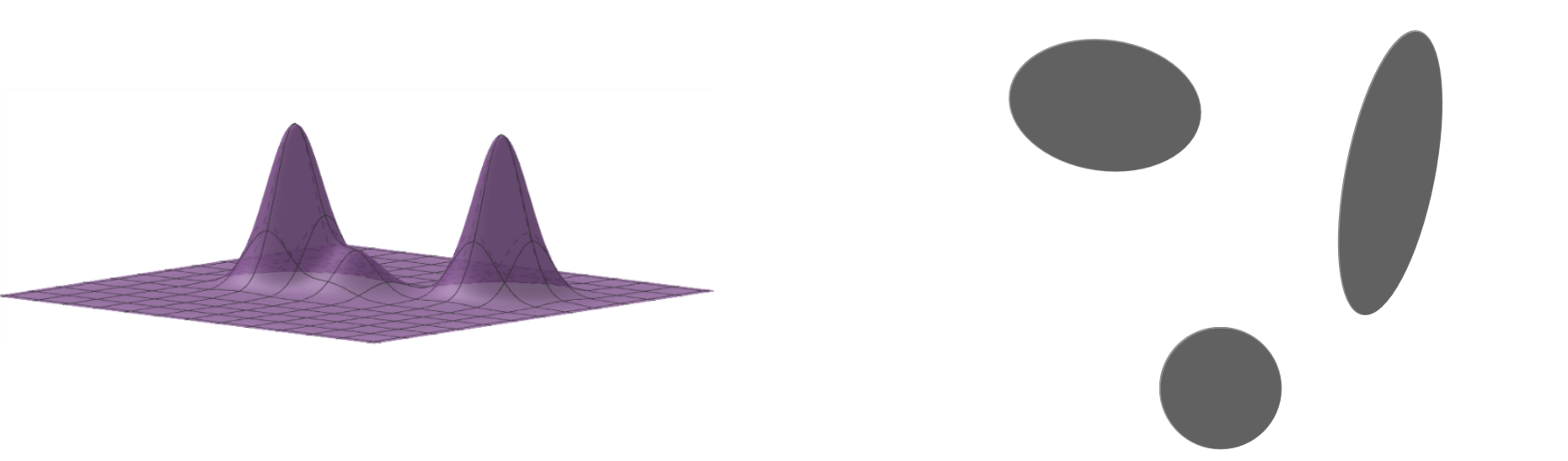}
\caption{Two examples of scatterers for which the wave asymptotics \eqref{e:w2} hold, a Schr\"odinger operator with nonnegative potential on the left, and three convex obstacles on the right.}\label{f:bumpob}
\end{figure}

\subsubsection{Logarithmic capacity and Dirichlet obstacle scattering}
A classical problem in scattering theory is {\em obstacle scattering.}  Let $\mathcal{O}\subset \Real^2$ be an {\em obstacle}, that is, a compact set such that $\Real^2\setminus \mathcal{O}$ is connected. This is a generalization of the setting of several convex obstacles discussed above. The Dirichlet problem on the exterior domain $\Real^2\setminus \mathcal{O}$ involves studying solutions of 
\begin{equation}\label{eq:Dp}
\left\{ \begin{array}{r l}
(-\Delta -\lambda^2)u=0 \;& \text{on }\;\Real^2\setminus \mathcal{O}\\
u =0 & \text{on }\; \partial \mathcal{O}.
\end{array}\right.
\end{equation}
If $\mathcal{O}$ is not polar, for example, if $\mathcal{O}$  contains a line segment, then there are no nontrivial bounded solutions of \eqref{eq:Dp} with $\lambda=0$.  There is, however, a solution $U_{\log}$ of 
$-\Delta U_{\log} =0$ on the exterior of~$\mathcal{O}$, satisfying the Dirichlet boundary condition, such that $U_{\log} (x)=\log|x|- C(\mathcal{O})+O(1/|x|)$ as $|x|\rightarrow \infty$.  The 
constant $C(\mathcal{O})$ is the logarithm of the logarithmic capacity of $\mathcal{O}$.

The function $U_{\log}$ appears in the asymptotic expansion of the cut-off resolvent of the Dirichlet Laplacian on the exterior of $\mathcal{O}$, as  does $C(\mathcal{O})$.
%, in the coefficient of $1/(\log \lambda)$.  
The constant $C(\mathcal{O})$ also appears in the expansion of  other fundamental quantities in scattering theory, including the Dirichlet-to-Neumann map, the scattering matrix, and the scattering phase; see \cite{ChDaob} for more on these expansions.  The scattering phase, $\sigma(\lambda)$, is, up to normalization, the logarithm of the determinant of the scattering matrix, and is important for its relation to the spectral shift function.
 %{\em (Put in sentence about what this is?)}  
  If $\mathcal{O}$ is not polar, then
\begin{equation}\label{eq:sp}\sigma(\lambda) = \frac{1}{2\pi i} \log \left(1 + \frac{i\pi}{\log \lambda -\log 2+\gamma +C(\mathcal{O})-\pi i/2} \right) +O(\lambda^2 \log \lambda),\; \text{as}\;
\lambda \downarrow 0.
\end{equation}
The leading terms of this expansion were found by \cite{hassellzelditch} (to error $(\log \lambda)^{-2}$) and \cite{mcg} (to error $(\log \lambda)^{-4})$ using different techniques 
which are specific
to Dirichlet obstacle scattering.  
However, using our resolvent expansion and an identity of Petkov--Zworski \cite{pz} for the scattering matrix, one can show following the technique of \cite[Section 3]{ChDaob},
that for a large class of perturbations of $-\Delta$ on $\Real^2$ the scattering phase has behavior like \eqref{eq:sp} at $0$.

\subsubsection{Perturbations of  resonances or eigenvalues at zero}

The structure of %resonances and eigenvalues at 
the bottom of the continuous spectrum is particularly rich in two-dimensional scattering. In particular, there are two kinds of resonances at $0$, distinguished by the behavior of the corresponding
resonant states at infinity.  To illustrate this, let $V\in L^\infty_c(\Real^2;\Real)$ and consider the Schr\"{o}dinger operator $-\Delta +V$.  If there is a $u \in  L^\infty(\mathbb R^2) \setminus L^2(\Real^2)$ satisfying $(-\Delta +V)u=0$, then $-\Delta+V$ is said to have a resonance at $0$, and $u$ is a corresponding resonant state.  If $u(x)=O(|x|^{-1})$ as $x\rightarrow \infty$, then $u$ is called a $p$-resonant state; otherwise $u$ is called an $s$-resonant state.
The presence of each kind of resonant state leads to a different kind of singularity in the resolvent expansion at $0$.

Let $V_{\ve}=V_0+\ve V_1$, where $V_0,\; V_1\in L^\infty(\Real^2)$ are compactly supported, real-valued potentials, and $V_1\geq 0$ is nontrivial.  
It is classical \cite{ks} that if $-\Delta +V_0$ has a resonance or eigenvalue at $0$, then for $\ve<0$, $-\Delta +V_{\ve}$ has a negative eigenvalue $-\lambda_{\ve}^2$, with $-\lambda_{\ve}^2\uparrow 0$ as $\ve \uparrow 0$. But something interesting happens when $\ve$ increases from $0$: an $s$-resonance disappears, while a $p$-resonance or an eigenvalue persists as a nonzero resonance, with quite different  $\ve$-dependence in the perturbed $p$-resonance case compared to the perturbed eigenvalue case.  For small positive $\ve$, the effect of the resonance resulting from the perturbation can be easily observed by the
Breit-Wigner peak it causes in the scattering phase -- see Figure \ref{f:bw}.
The different kinds of behavior that result from perturbing an $s$-resonance, a $p$-resonance, and an eigenvalue at $0$ are explained by the different singular behavior of the resolvent at $0$ in these three cases.  See \cite{CDG} for more on this topic.

\begin{figure}[ht]
\includegraphics[width=15cm]{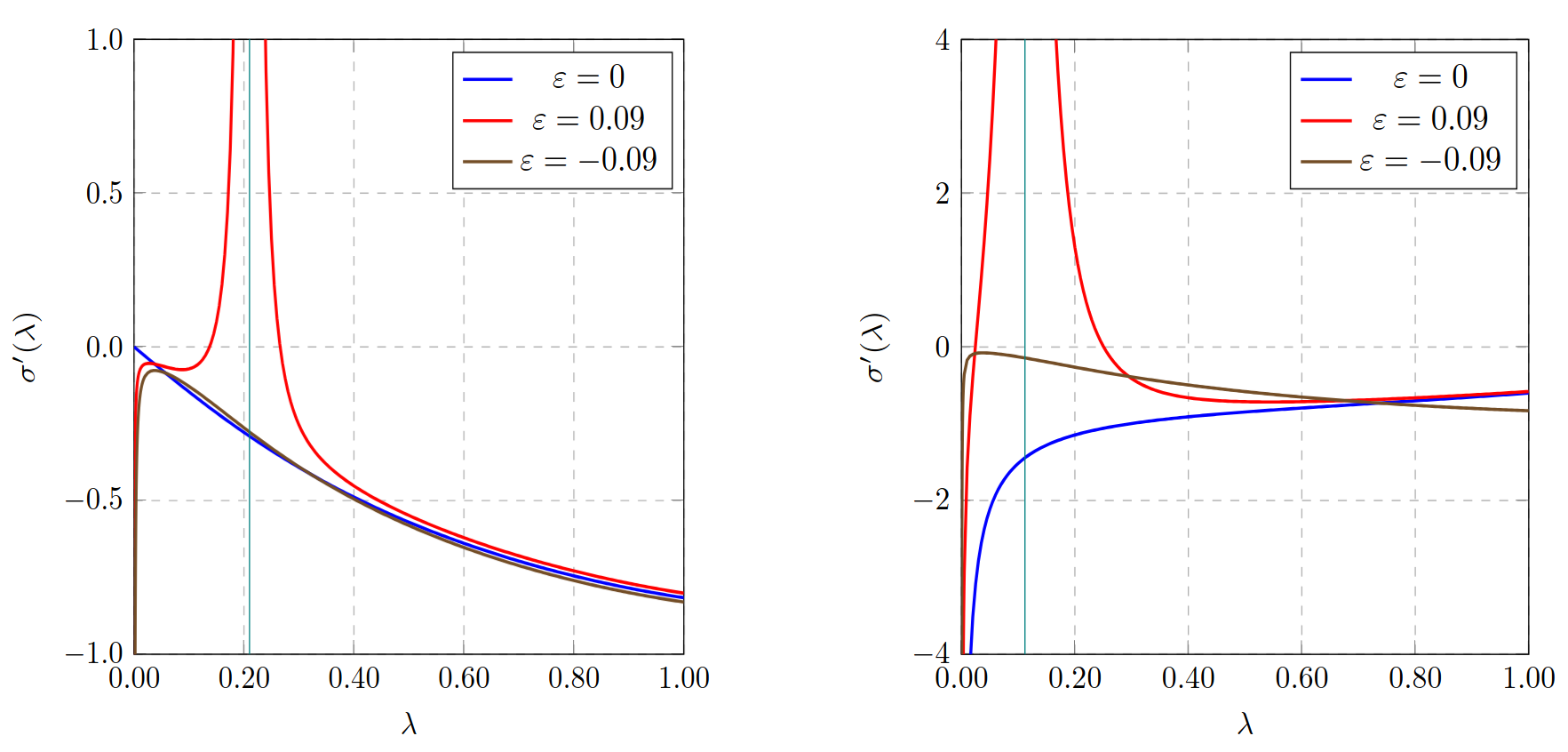}
\caption{Two examples of numerically striking Breit--Wigner peaks in the scattering phase. These peaks appear for $\varepsilon>0$ where a complex resonance occurs   near the real axis, corresponding to   the perturbation of a zero resonance/eigenvalue for $\varepsilon=0$. The peak on the left is more than twenty times taller than the already significant one on the right, because the left one corresponds to a zero eigenvalue while the right one  corresponds to a zero resonance. See \cite{CDG} for details.}\label{f:bw}
\end{figure}

\subsection{Our methods} %, and other applications}
The main results in this paper are low energy resolvent asymptotics for a broad class of operators generalizing the above examples, including black-box  operators in the sense of Sj\"ostrand and Zworski \cite{sz}, as well as certain non-self-adjoint  operators. 

Two main tools are an identity due to Vodev \cite{vo14}, and a boundary pairing functional.  (See Sections \ref{s:vod} and \ref{s:bp} respectively.)  Let $P$ denote one of the  operators introduced in Section \ref{s:exapp}.   Vodev's identity relates the $P$-resolvent $(P-\lambda^2)^{-1}$  to the free resolvent $(-\Delta - \lambda^2)^{-1}$.    Many such identities are used throughout spectral and scattering theory, but Vodev's identity has two features that make it particularly useful. The first is its generality, applying to any pair of operators which are compactly supported perturbations of one another. The second is that it involves two spectral parameters in an almost symmetric way; we denote these $\lambda$ and $z$, and in our arguments we take sometimes $\lambda \to 0$ and sometimes $z \to 0$.

Low energy asymptotics for the free resolvent follow from the standard formula for its Schwartz kernel in terms of a Hankel function. To compute asymptotics of $(P-\lambda^2)^{-1}$, we use Vodev's identity to reduce to the problem of inverting an operator $I+K(\lambda)$, where $K(\lambda)$ is compact.
In the case where the norm of the cut-off resolvent of $P$ does not grow too fast at the origin, we can show that  $I+K(\lambda)$ is 
invertible near $\lambda=0$ with a convenient expansion for its inverse.   On the other hand,  in the general case  a version of the 
analytic Fredholm theorem shows the existence of a less nice expansion of the inverse at the origin.  Further work,  using both Vodev's identity and 
a kind of boundary pairing identity similar to that 
of \cite{tapsit},  allows us to determine the coefficients of the first few singular terms in the resolvent expansion.    In particular, we show the connection between
the existence of an eigenvalue at $0$, a $p$-resonance, an $s$-resonance, and corresponding singularities of the resolvent.

Our methods work beyond the setting described in this paper. For example, in \cite{CDY1}, we apply them to Aharonov--Bohm operators. These have the form $P = (-i\nabla - \vec A)^2$ with  the support of $\vec A$ extending to infinity, and their important feature is that the magnetic potential has physical effects beyond the support of its magnetic field: see \cite{at,ds,my,cf,f24} for recent work on such operators.  Our methods also work for asymptotically conic manifolds: see \cite[Section~1.3]{CDY2}. In this paper, we focus on dimension two because of the particularly rich phenomena this setting affords, but any other dimension works just as well.

\subsection{Resolvent expansions}\label{s:mainres}

Our main resolvent expansions hold in the abstract framework of Section \ref{s:bb}, which includes the black box setting of Sj\"ostrand and Zworski \cite{sz}, as well as certain non-self-adjoint problems. 
The following examples are especially important:
\begin{enumerate}
\item Let $P= - \Delta +V$, where $V$ is a  bounded, compactly supported function.   For Theorem \ref{t:nre} we will need $V\in L^\infty_c(\Real^2;\Real)$. For Theorem \ref{t:resexp} it is enough if 
$V\in L^\infty_c(\Real^2;\Complex)$ and
$\re V\geq 0$.
\item Let $P$ be the Dirichlet or Neumann Laplacian $-\Delta$ on $\mathbb R^2 \setminus \mathcal O$, where $\mathcal O$ is a compact set with $C^\infty$ boundary and $\mathbb R^2 \setminus  \mathcal O$ is connected. 
\end{enumerate}

To describe the spaces of solutions to $Pu=0$ needed for our main statements, we introduce the following definitions. Let $\Omega = \mathbb R^2$ in the first case and $\Omega = \mathbb R^2 \setminus \mathcal O$ in the second case. Let $\mathcal H =  L^2(\Omega)$, let $\mathcal D$ be the domain of $P$, let $\mathcal H_{c}$ be the set of functions in $\mathcal H$ which have bounded support, and let $\mathcal D_{\text{loc}}$ be the set of functions which are locally in $\mathcal D$. 
  Recall that if a function $u$ is harmonic and polynomially bounded outside of a disk, then it grows or decays like a power of $|x|$ or like  $\log |x|$.
We accordingly define:
\begin{equation}\label{e:gldef}\begin{split}
\mcg_{l}&:=\{ u\in \mathcal{D}_{\loc} \colon  Pu=0,\; u(x)=O(|x|^{l}) \; \text{as $|x|\rightarrow \infty$}\},\\
\mcg_{\log}& := \{ u\in \mathcal{D}_{\loc} \colon  Pu=0,\; u(x)=O(\log |x|)\; \text{as $|x|\rightarrow \infty$}\}.
\end{split}\end{equation}
Note that $\mcg_{-2}$ is the zero eigenspace of $P$.

\begin{defin}\label{d:0res}
If $\mcg_{0} \ne \mcg_{-2}$, we say $P$ has a \textit{zero resonance}. If $u\in \mcg_0 \setminus \mcg_{-1}$, we call $u$ an $s$-resonant state and say $P$ has an $s$-resonance. If $u\in \mcg_{-1} \setminus \mcg_{-2}$, we  call $u$ a $p\,$-resonant state and 
 say $P$ has a $p\,$-resonance. 
 \end{defin} 

For example, the free Laplacian and  Neumann Laplacian always have a zero resonance with an $s$-resonant state (let $u$  be a constant) while the Dirichlet Laplacian never does. But %there is no such general result for 
the Schr\"odinger operator $-\Delta +V$ %(unless $V \ge 0$), which 
may have various combinations of $s$-resonance, $p\,$-resonance, and zero eigenvalue. 
See Section \ref{s:examples} for more on these examples.

Dimensions of quotient spaces of the $\mcg_l$ are especially relevant for the form of low-energy resolvent expansions. By standard  harmonic function expansions, recalled in \eqref{e:fcexp} below, the space of $s$-resonances (resp. $p\,$-resonances) has, modulo more rapidly decaying elements of the nullspace of $P$, dimension at most one (resp. two). In symbols,
\begin{equation}\label{e:dg0g-1}
 \dim  (\mcg_{0}/\mcg_{-1})  \leq 1 \quad \text{and} \quad \dim  (\mcg_{-1}/\mcg_{-2}) \leq 2.
\end{equation}

Recall that the resolvent $(P-\lambda^2)^{-1}$ continues meromorphically from the upper half plane to $\Lambda$, the Riemann surface of $\log \lambda$, as an operator $\mathcal H_{c}$ to $\mathcal D_{\text{loc}}$, i.e. from compactly supported functions in $\mathcal H$ to functions which are locally in $\mathcal D$.  We denote this meromorphic continuation by $R(\lambda)$. In other words, for any smooth and compactly supported $\chi$, $\chi R(\lambda) \chi$ is meromorphic on $\Lambda$, with $\|\chi R(\lambda) \chi\|_{\mch \to \mch}$ bounded away from poles.  See Section \ref{s:vod} below for a review of this.

Although a number of our results are valid for non-self-adjoint operators, for our first result we restrict ourselves to the self-adjoint case.   
\begin{thm} \label{t:nre}
Let $P$ be a self-adjoint operator chosen from among the above examples, or a general self-adjoint black-box perturbation of the Laplacian as defined in Section \ref{s:bb}.   Then for any
$\varphi_0>0$, we have the following expansion in the sense of
operators $\mch_c\rightarrow \mathcal{D}_{\loc}$:
\begin{equation}\label{e:resexp11}
 R(\lambda) =  \frac{-\Pe}{\lambda^2}   +   \frac{1}{\pi}\sum_{m=1}^{M}\frac{U_{w_m}\otimes U_{w_m} }{\lambda^2(\log \lambda -s_m)}  + B_{01}\log \lambda+ O(1),
\end{equation}
as $\lambda \to 0$ with $|\arg \lambda|<\varphi_0$. Here: 
\begin{itemize}
 \item The operator $\mathcal{P}_e$ is projection onto the zero eigenfunctions of $P$, i.e.  onto $\mcg_{-2}$.
\item The notation $U \otimes U$ means the operator mapping $f$ to $U\langle f , U\rangle_{\mathcal H}$, $M= \dim \mcg_{-1} / \mcg_{-2}$, the $U_{w_m}$ are certain elements of $\mcg_{-1}$, and the $s_m$ are certain constants: see Proposition \ref{p:B-2negk}.

\item  We have  $ B_{01}= \frac {-1}{2\pi}  U_0 \otimes U_0 -(\rho^2/2)\mathcal P_e {\bf 1}_{> \rho} \Pi_{-2} {\bf 1}_{> \rho} \mathcal P_e  $, where $U_0 \in   \mcg_0$,   $ {\bf 1}_{> \rho}$ is the characteristic function  of the set $\{x \colon |x| > \rho\}$,  $\Pi_{-2}$ is projection onto the $l= -2$ modes in the Fourier expansion \eqref{e:fcexp}, and  $\rho$ is large enough that $P = -\Delta$ when $|x|\ge\rho$: see Proposition~\ref{p:b01form}.
\end{itemize}
\end{thm}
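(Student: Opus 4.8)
The plan is to derive \eqref{e:resexp11} from a resolvent identity that reduces the problem on $\Omega$ to the explicitly known free resolvent on $\Real^2$. Fix $\rho$ large enough that $P=-\Delta$ on $\{|x|\ge\rho\}$, and pick cutoffs $\chi,\tilde\chi\in C_c^\infty$ equal to $1$ near the support of the perturbation with $\tilde\chi=1$ near $\supp\chi$. Writing $R_0(\lambda)=(-\Delta-\lambda^2)^{-1}$ for the continued free resolvent on $\Real^2$, one has the classical two-dimensional expansion $R_0(\lambda)=a(\lambda)(\mathbf 1\otimes\mathbf 1)+b_0\log\lambda+G_0+O(\lambda^2\log\lambda)$ on any compact set, where $a(\lambda)=\tfrac{1}{2\pi}(\log\lambda-\log 2+\gamma-i\pi/2)^{-1}$ or, more usefully, $a(\lambda)^{-1}=2\pi(\log\lambda+c_0)$ with $c_0$ an explicit constant; the key feature is the $(\log\lambda)^{-1}$ leading singularity carried by a rank-one term onto constants. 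Gluing this to $P$ via a Vodev-type identity (following the ``review'' promised in Section~\ref{s:vod}) expresses $\chi R(\lambda)\chi$ as $\chi R_0(\lambda)\chi$ plus a term built from a finite-rank operator $M(\lambda)$ acting between the fixed finite-dimensional data near the perturbation, and the whole question becomes the Laurent/asymptotic expansion of $M(\lambda)^{-1}$ as $\lambda\to0$.

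The next step is to identify the structure of $M(\lambda)^{-1}$ in terms of the spaces $\mcg_l$. This is exactly the content of the propositions cited in the theorem statement: Proposition~\ref{p:B-2negk} produces the intermediate-order coefficients $U_{w_m}\otimes U_{w_m}/(\log\lambda-s_m)$ from the $p$-resonant subspace $\mcg_{-1}/\mcg_{-2}$ (dimension $M$, by \eqref{e:dg0g-1}), and Proposition~\ref{p:b01form} pins down the $\log\lambda$-coefficient $B_{01}$. I would assume those two propositions (they are stated earlier in the paper) and organize the argument as follows. First, a finite-rank, meromorphic-Fredholm analysis shows that $R(\lambda)$ has at worst a pole of order two at $\lambda=0$ on the Riemann surface $\Lambda$; self-adjointness forces the most singular coefficient to be a nonnegative operator, and a standard identification of its range with $L^2$-solutions of $Pu=0$ shows it equals $-\mathcal P_e/\lambda^2$. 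This gives the first term. Second, the residual operator $R(\lambda)+\mathcal P_e/\lambda^2$ still carries $(\log\lambda)$-type singularities coming from the poorly invertible block of $M(\lambda)$ associated with $p$-resonances; expanding that block's inverse in the basis adapted to $\mcg_{-1}/\mcg_{-2}$ yields precisely the sum $\tfrac1\pi\sum_{m=1}^M U_{w_m}\otimes U_{w_m}/(\lambda^2(\log\lambda-s_m))$, with the $s_m$ the reciprocals-shifted eigenvalues of the relevant matrix. Third, after subtracting those two groups, what remains is bounded as $\lambda\to0$ except for a single rank-one $\log\lambda$ term coming from the $s$-resonant direction $\mcg_0/\mcg_{-1}$ together with a correction quadratic in the large-$|x|$ behavior of the zero eigenfunctions; matching the $\log\lambda$ coefficient against the free expansion and the boundary-pairing formula gives $B_{01}=-\tfrac1{2\pi}U_0\otimes U_0-(\rho^2/2)\mathcal P_e\mathbf 1_{>\rho}\Pi_{-2}\mathbf 1_{>\rho}\mathcal P_e$. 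The remaining terms are $O(1)$.

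To make the identifications of the coefficients canonical — rather than merely ``certain operators'' — I would use the boundary pairing argument in the style of Melrose: for two solutions $u,v$ of $(P-\lambda^2)u=(P-\bar\mu^2)v=0$ with prescribed asymptotics, Green's formula on the annulus $\{\rho<|x|<\rho'\}$ relates the pairing $\langle Pu,v\rangle-\langle u,Pv\rangle$ to boundary integrals at $|x|=\rho'$, and sending $\rho'\to\infty$ extracts the coefficients of the harmonic expansion \eqref{e:fcexp}. This is what converts the abstract Laurent coefficients of $M(\lambda)^{-1}$ into the geometrically meaningful projections $\mathcal P_e$, $U_{w_m}\otimes U_{w_m}$, $U_0\otimes U_0$, and the $\Pi_{-2}$-term; in particular it is where the factor $\rho^2/2$ enters, via $\int_{|x|=\rho}(\partial_r\log|x|)\,|x|^{-1}\,ds$-type computations.

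The main obstacle is the two-dimensional logarithm: unlike in odd dimensions, the free resolvent itself is singular at zero energy (the leading term is $(\log\lambda)^{-1}$, not $O(1)$), so the finite-rank operator $M(\lambda)$ is \emph{not} invertible at $\lambda=0$ and one must expand $M(\lambda)^{-1}$ through several orders in the nonstandard asymptotic scale $\{1,\ (\log\lambda)^{-1},\ \lambda^2\log\lambda,\dots\}$ while keeping careful track of how the kernel and cokernel of $M(0)$ interact with the $\mcg_l$ filtration. Disentangling the $\lambda^{-2}(\log\lambda)^{-1}$ terms (tied to $\mcg_{-1}$) from the pure $\lambda^{-2}$ term (tied to $\mcg_{-2}$) and from the $\log\lambda$ term (tied to $\mcg_0$) — i.e. showing the filtration by growth rate matches the filtration by singularity order — is the delicate combinatorial-and-analytic heart of the proof, and is exactly what Propositions~\ref{p:B-2negk} and~\ref{p:b01form} are set up to handle; here I would simply invoke them and assemble \eqref{e:resexp11}.
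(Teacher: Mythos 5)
Your outline follows the same broad arc as the paper: reduce to a Fredholm problem via Vodev's identity, deduce a Hahn-meromorphic expansion, and then invoke Propositions~\ref{p:b01form} and~\ref{p:B-2negk} together with boundary-pairing computations to identify the coefficients geometrically. So the skeleton is right, and you correctly locate where the real work is. But two of your intermediate steps do not hold as stated.

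First, the free resolvent does not have a $(\log\lambda)^{-1}$-type leading term. As in \eqref{e:r0series}--\eqref{eq:R2jk}, $R_0(\lambda)=R_{01}\log\lambda+R_{00}+O(\lambda^2\log\lambda)$ with $R_{01}=-\tfrac1{2\pi}\,\mathbf 1\otimes\mathbf 1$; the leading singularity is a positive power of $\log\lambda$, not a reciprocal. Negative powers of $\log\lambda$ only appear after inverting the interaction kernel (as in the non-resonant alternative of Theorem~\ref{t:resexp}, or the $p$-resonance sum in \eqref{e:resexp11}), not in $R_0$ itself. As written, your ``$a(\lambda)(\mathbf 1\otimes\mathbf 1)$'' ansatz for $R_0$ would send the subsequent bookkeeping off in the wrong direction.

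Second, and more consequentially, the claim that ``a finite-rank, meromorphic-Fredholm analysis shows that $R(\lambda)$ has at worst a pole of order two'' is incorrect. The Fredholm/Hahn-holomorphic analysis (Proposition~\ref{p:rexpbasic}) yields only an expansion $\sum_{j\ge -j_0}\sum_{k}B_{2j,k}\lambda^{2j}(\log\lambda)^k$ with \emph{some} finite $j_0$, and by itself gives no control on $j_0$. To obtain $j_0\le 1$ you genuinely need to use self-adjointness, and the paper does so via the spectral-theorem estimate $\|R(e^{i\pi/4}r)\|_{\mch\to\mch}\le r^{-2}$ along the off-axis ray $\arg\lambda=\pi/4$ (Proposition~\ref{p:resexpsa}); you cannot get this from ``positivity of the leading coefficient'' alone, since a positive finite-rank operator could in principle sit in front of a higher-order pole. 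The identification $B_{-2,0}=-\Pe$ also requires more than ``a standard identification of its range with $L^2$-solutions'': the paper needs $\|B_{-2,0}\|\le 1$, then $\Pe B_{-2,0}=B_{-2,0}$ from $PB_{-2,0}=0$, and then $\Pe P B_{00}=0$ (obtained via a boundary-pairing estimate showing $\bpr(B_{00}\phi,\psi)\to 0$), before applying $\Pe$ to $PB_{00}-B_{-2,0}=I$. If you fill in these two gaps — replace the Fredholm pole-order claim with the self-adjoint resolvent bound, and correct the $R_0$ expansion — your proposal lines up with the paper's proof.
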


\noindent\textbf{Remarks.}  1.  From the statement of Theorem \ref{t:nre}, it is clear that certain leading terms of \eqref{e:resexp11} vanish when certain types of zero resonance/eigenvalue are absent, i.e. when $\mcg_l = \{0\}$ for some $l \in \{-2,-1,0\}$. Specifically, if $\mcg_{-2}=\{0\}$, then $\|\chi R(\lambda) \chi\| = O(\lambda^{-2}(\log \lambda)^{-1})$; if $\mcg_{-1} = \{0\}$ then $\|\chi R(\lambda) \chi\|= O(\log \lambda)$; if $\mcg_{0} = \{0\}$, then $\|\chi R(\lambda) \chi\| = O(1)$.

\noindent 2. The expansion \eqref{e:resexp11} can be continued: see \eqref{eq:reb} for a general result,  Propositions~\ref{p:noneglog} and~\ref{p:easysum} for simplifications  when $\mcg_{-1}= \mcg_{-2}$, and Theorem \ref{t:resexp} for further simplifications when $\mcg_{-1}=\{0\}$.

Our results in Section \ref{s:fsrb} show that if $P$ is a (not necessarily self-adjoint) black-box perturbation of the Laplacian as defined in Section \ref{s:bb} and if $\mcg_{-1}=\{0\}$, i.e. if $P$ has neither eigenvalue $0$ nor a $p\,$-resonant state, 
then the resolvent of $P$ grows mildly in the sense of the bound $\|\chi R(\lambda) \chi\| = o(\lambda^{-2} (\log \lambda)^{-1})$: see \eqref{e:uhpbd} below.  In some cases, including Dirichlet obstacle scattering, it is straightforward to prove this bound directly: see Section \ref{s:examples}.  In any case, with the resolvent bound $\|\chi R(\lambda) \chi\| = o(\lambda^{-2} (\log \lambda)^{-1})$ we are able to prove the following more refined resolvent expansion.  The assumption of the bound on the cut-off resolvent in Theorem \ref{t:resexp}
allows an easier and more direct proof than that of Theorem \ref{t:nre}.
%Our proof of Theorem \ref{t:resexp} is more direct than that of 
%Theorem \ref{t:nre}.

\begin{defin}\label{d:series} We say a series $\sum_{n} T_n(\lambda)$ of operators $T_n(\lambda) \colon \mch_c \to \mathcal D_{\text {loc}}$ \textit{converges absolutely, uniformly on sectors near zero} if, for any $\varphi_0>0$ and any
$\chi\in C_c^\infty(\Real^2)$  which is identically $1$ on the set where $P \ne -\Delta$, there is $\lambda_0>0$, such that the series $\sum_n \|\chi T_n(\lambda) \chi \|$ converges uniformly for all $\lambda \in \Lambda$ with $|\lambda| \le \lambda_0$ and $|\arg \lambda | \le \varphi_0$. 
 \end{defin}

\begin{thm}\label{t:resexp} Let $P$ be as in the above examples, or alternatively let $P$ be a more general black-box perturbation of the Laplacian as defined in Section \ref{s:bb}. Suppose the resolvent of $P$ grows mildly at $0$ in the sense of the bound $\|\chi R(\lambda) \chi\| = o(\lambda^{-2} (\log \lambda)^{-1})$. 

%If $P$ has a zero resonance, 
If $\mcg_0  \ne \{0\}$, i.e. if $P$ has a zero resonance, 
then there are operators $B_{2j,k} \colon \mathcal H_{\text{c}} \to \mathcal D_{\text{loc}}$ such that
\begin{equation}\label{e:0resser}\begin{split}
 R(\lambda) &= \sum_{j=0}^\infty \sum_{k=0}^{2j+1} B_{2j,k} \lambda^{2j}(\log \lambda)^k = B_{01} \log \lambda + B_{00} + B_{23} \lambda^2 (\log \lambda)^3 + \cdots.
\end{split}\end{equation}
If $k\not =0$ then $B_{2j,k}$ has finite rank. Moreover, if $P$ is self-adjoint, then 
$$B_{01}=-\frac{1}{2\pi} U_0\otimes U_0,$$
where $U_0 \in \mathcal G_0$ obeys $U_0(x)=1+O(|x|^{-1})$ as $|x|\rightarrow \infty$ and is specified in Lemma \ref{l:A01}.

If $\mcg_0 = \{0\}$, i.e. if $P$ has no zero resonance, 
%If $P$ does not have a zero resonance, 
then there are operators $B_{2j,k}, \ \widetilde B_{2j,-k} \colon \mathcal H_{\text{c}} \to \mathcal D_{\text{loc}}$ and a constant $a$ such that
\begin{equation}\label{e:no0resser}\begin{split}
R(\lambda)  &=  \sum_{j=0}^\infty  \left( \sum_{k=0}^{j}  B_{2j,k}  (\log \lambda)^k + \sum_{k=1}^{j+1}  \widetilde  B_{2j,-k}  (\log \lambda -a)^{-k} \right)\lambda^{2j}\\
&=  B_{00}  +   \widetilde B_{0,-1} (\log \lambda -a)^{-1} +  B_{21}   \lambda^2 \log \lambda + \cdots.
\end{split}\end{equation}
If $k\not =0$, then $B_{2j,k}$ and $\tilde B_{2j,-k}$ have finite rank. Moreover, if $P$ is self-adjoint, then 
\[
 \widetilde B_{0,-1} = \frac 1 {2\pi} U_{\log} \otimes U_{\log},  \ \  a = \log 2 - \gamma + \frac {\pi i }2 + \lim_{|x| \to \infty} \Big( U_{\log}(x)-\log|x|\Big), \ \ \gamma = -\Gamma'(1) = 0.577\dots,
\]
where $U_{\log}$ is the unique element of $\mcg_{\log}$  such that $U_{\log}(x) \sim \log|x|$ as $|x| \to \infty$.

The series \eqref{e:0resser} and \eqref{e:no0resser} converge absolutely, uniformly on sectors near zero.
\end{thm}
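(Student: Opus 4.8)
The plan is to reduce the construction of $R(\lambda)$ near $\lambda = 0$, via a resolvent identity in the style of Vodev (Section~\ref{s:vod}), to inverting a finite‑rank perturbation of the identity on a fixed auxiliary space, and then to carry out that inversion explicitly. The engine is the low‑energy structure of the free resolvent $R_0(\lambda) = (-\Delta-\lambda^2)^{-1}$ on $\Real^2$: since its Schwartz kernel is $\frac i4 H_0^{(1)}(\lambda|x-y|)$ and $H_0^{(1)}(z) = f(z^2) + g(z^2)\log z$ with $f,g$ entire, one has, as operators $\mch_c \to \mathcal D_{\loc}$,
\begin{equation*}
 R_0(\lambda) = F(\lambda^2) + (\log\lambda)\,G(\lambda^2), \qquad G(\lambda^2) = \sum_{j\ge 0} G_j\lambda^{2j}, \quad G_0 = -\tfrac{1}{2\pi}\,\mathbf 1\otimes\mathbf 1,
\end{equation*}
with $F,G$ holomorphic in $\lambda^2$ near $0$, each $G_j$ of finite rank (it has a polynomial kernel, so any spatial cutoff of it is finite rank), and $G_0$ of rank one. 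Two features will be used repeatedly: (i) at each order $\lambda^{2j}$ only the first power of $\log\lambda$ appears; (ii) the coefficient of the leading singular term $\log\lambda$ has rank one.

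First I would glue $R(\lambda)$ out of $R_0(\lambda)$ and a compactly supported reference resolvent. Following the resolvent‑identity argument of Section~\ref{s:vod}, one writes $\chi R(\lambda)\chi$ in terms of $R_0(\lambda)$, the reference resolvent, and the inverse of an operator $I + A(\lambda)$ on a fixed Hilbert space, where $A(\lambda)$ is of finite rank (or trace class) and inherits the structure of $R_0$: $A(\lambda) = \widetilde F(\lambda^2) + (\log\lambda)\widetilde G(\lambda^2)$ with $\widetilde G(\lambda^2) = \sum_j \widetilde G_j\lambda^{2j}$, each $\widetilde G_j$ of finite rank and $\widetilde G_0$ of rank at most one. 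The problem thus becomes the inversion of $I + A(\lambda)$ as $\lambda\to 0$. The hypothesis $\|\chi R(\lambda)\chi\| = o(\lambda^{-2}(\log\lambda)^{-1})$ enters here: it ensures that the only obstruction to a Neumann series for $(I + A(\lambda))^{-1}$ is the space $\mcg_0$ of $s$‑resonances (equivalently, it forces $\mcg_{-1} = \{0\}$), which by \eqref{e:dg0g-1} has dimension at most one.

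Now I would invert $I + A(\lambda)$ in two cases. If $\mcg_0 = \{0\}$, then $I + \widetilde F(\lambda^2)$ is invertible for $|\lambda|$ small, with inverse $C(\lambda^2)$ holomorphic in $\lambda^2$; factoring $I + A(\lambda) = \big(I + (\log\lambda)D(\lambda^2)\big)\big(I + \widetilde F(\lambda^2)\big)$ with $D = C\widetilde G$ and $D_0 = D(0)$ of rank one, one inverts $I + (\log\lambda)D_0$ explicitly by the rank‑one formula — its inverse expands in powers of $(\log\lambda)^{-1}$, equivalently in $(\log\lambda - a)^{-1}$ after fixing the normalizing constant — and then removes the remaining factor $I + \lambda^2(\log\lambda)(\text{bounded})$ by a Neumann series convergent for $|\lambda^2\log\lambda|$ small. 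Reassembling yields exactly \eqref{e:no0resser}, with the stated ranges of $k$ and with finite rank for every coefficient carrying a nonzero power of $\log\lambda$ or $(\log\lambda - a)^{-1}$. If instead $\dim\mcg_0 = 1$, then $I + \widetilde F(0)$ has a one‑dimensional kernel and I would use a Schur/Feshbach reduction: $I + A(\lambda)$ is still invertible on the complementary subspace, and the inversion collapses to that of a scalar function $m(\lambda)$ whose low‑energy expansion (in $\lambda^2$ and $\log\lambda$, with leading term a nonzero multiple of $\log\lambda$ coming from $\widetilde G_0$) is read off from $A(\lambda)$; inverting $m(\lambda)$ and reassembling produces \eqref{e:0resser}, the growth of the log‑powers (up to $2j+1$ at order $\lambda^{2j}$) arising from the Schur formula combining $m(\lambda)^{-1}$ with the $\lambda^2$‑expansion. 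In both cases convergence ``absolutely, uniformly on sectors near zero'' follows because $|\log\lambda|\to\infty$ uniformly on $|\arg\lambda|\le\varphi_0$ as $|\lambda|\to 0$, so all the geometric series above (in $(\log\lambda)^{-1}$, in $\lambda^2$, and in $\lambda^2\log\lambda$) converge uniformly for $|\lambda|\le\lambda_0(\varphi_0)$.

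Finally I would identify the leading coefficients in the self‑adjoint case. Matching the most singular terms in $(P-\lambda^2)R(\lambda) = I$ shows that the range of $B_{01}$ (resp. $\widetilde B_{0,-1}$), and, taking adjoints, its co‑range, consists of solutions of $Pu = 0$ that are $O(1)$ (resp. $O(\log|x|)$), hence of elements of $\mcg_0$ (resp. $\mcg_{\log}$); self‑adjointness then forces $B_{01}$ to be a multiple of $U_0\otimes U_0$ and $\widetilde B_{0,-1}$ a multiple of $U_{\log}\otimes U_{\log}$. The constants $-\tfrac1{2\pi}$, $\tfrac1{2\pi}$ and the value of $a$ are pinned down by a boundary‑pairing computation following Melrose: apply Green's identity to $R(\lambda)f$ and a resonant state on a large circle $|x| = \rho'$, let $\rho'\to\infty$, and use the asymptotics $U_0(x) = 1 + O(|x|^{-1})$ and $U_{\log}(x) = \log|x| + \text{const} + O(|x|^{-1})$ together with the explicit constant $\log 2 - \gamma + \tfrac{\pi i}{2}$ appearing in the expansion of $\tfrac i4 H_0^{(1)}(\lambda r)$; the additive term $\lim_{|x|\to\infty}\big(U_{\log}(x) - \log|x|\big)$ in $a$ is exactly the constant produced by this pairing. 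The main obstacle is the bookkeeping in the inversion step — keeping the powers of $\log\lambda$ within the stated ranges at each order $\lambda^{2j}$, and checking that every coefficient with a nonzero log‑power has finite rank — which rests entirely on features (i) and (ii) and is most delicate in the Schur‑reduction ($s$‑resonance) case.
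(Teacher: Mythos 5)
Your overall architecture --- a Vodev-type resolvent identity exhibiting the rank-one $\log\lambda$ singularity of the free resolvent, an explicit rank-one inversion, and boundary-pairing computations for the leading coefficients --- matches the paper's proof (Section~\ref{s:rs0}, via Lemmas~\ref{l:rfk}--\ref{l:B01}) in spirit. But there is a genuine gap in the way you encode the resonance/no-resonance dichotomy, and it is not a bookkeeping issue: you have identified the wrong mechanism. You claim that $\mcg_0 = \{0\}$ corresponds to $I + \widetilde F(0)$ being invertible and $\dim\mcg_0 = 1$ to $I + \widetilde F(0)$ having a one-dimensional kernel, with a Schur/Feshbach reduction in the latter case. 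In the paper's treatment no such reduction is needed: the ``regular'' inverse $D(\lambda) = (I - K(\lambda) + A(\lambda))^{-1}$ exists and has the controlled $(\log\lambda)^k\lambda^{2j}$, $k\le j$ structure in \emph{both} cases under the mild-growth hypothesis (Lemma~\ref{l:rfk}), and both cases use the \emph{same} rank-one formula $(I - A(\lambda)D(\lambda))^{-1} = I + \frac{A(\lambda)D(\lambda)}{1-(\log\lambda-\log z)\alpha(\lambda)}$. The dichotomy lives entirely in the scalar $\alpha_{00} = \langle K_1 D_{00}w,1\rangle$: when $\alpha_{00}\neq 0$ the denominator grows like $\alpha_{00}\log\lambda$, the scalar decays like $(\log\lambda)^{-1}$ and cancels the $\log\lambda$ coming from $F(\lambda)$ (no resonance, expansion \eqref{e:no0resser}); when $\alpha_{00}=0$ the scalar is bounded and the $\log\lambda$ survives ($s$-resonance, expansion \eqref{e:0resser}). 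The resonance is a \emph{vanishing scalar}, not a \emph{nontrivial kernel} of the regular part, and the identification of $\alpha_{00}=0$ with $\mcg_0\neq\{0\}$ is the content of Lemmas~\ref{l:pubd} and~\ref{l:A01}, not an a priori matter.

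One can see your diagnostic fails by testing it on the Birman--Schwinger setup $P = -\Delta+V$ with $\widetilde F(0) = V R_{00}$. If $\phi + V R_{00}\phi = 0$, then $u:= R_{00}\phi$ solves $Pu=0$, and conversely for $u\in\mcg_{\log}$ and $\phi := -Vu$ one computes $u - R_{00}\phi$ is a bounded harmonic function, hence a constant $c$, and $\phi + V R_{00}\phi = -cV$. Thus $\phi$ is a kernel element iff $c = 0$, i.e.\ iff the generator of $\mcg_{\log}/\mcg_{-1}$ satisfies the normalization $c_0 = -\gamma_0\, c_{\log}$; this is a different codimension-one condition than $c_{\log} = 0$ (which is what $\mcg_0\neq\{0\}$ says). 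So an $s$-resonant state generically does \emph{not} produce a kernel element of $I+\widetilde F(0)$, and a kernel element can occur with $\mcg_0 = \{0\}$. If you want to make your plan work, you should instead invert the regular part unconditionally (as the paper does), isolate the rank-one log operator, and prove the vanishing of the resulting scalar is equivalent to $\mcg_0 \neq \{0\}$; your Schur/Feshbach step as written would not invert the correct degeneracy and would produce, in the $\mcg_0\neq\{0\}$ case, a factor $\sim (\log\lambda)^{-1}$ that cancels rather than preserves the $\log\lambda$ singularity --- exactly backwards.
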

\noindent\textbf{Remark.}  By Corollary \ref{c:nspchar}, if $\mcg_{-1} = \{0\}$  then the hypothesis
$\|\chi R(\lambda) \chi\| = o(\lambda^{-2} (\log \lambda)^{-1})$ holds.

The expressions for $B_{0,1}$ in the first case and $\tilde B_{0,-1}$ in the second case are similar for non-self-adjoint $P$ and can be found in Lemmas \ref{l:A01} and \ref{l:B01}. Note that, by Theorem \ref{t:nre}, if $P=P^*$ then  \eqref{e:uhpbd} implies $\mcg_{-1} = \{0\}$ and hence $U_0$ is determined uniquely by the conditions $U_0 \in \mathcal G_0$ and $U_0(x)= 1+O(|x|^{-1})$ as $|x|\rightarrow \infty$.

\subsection{Background and context} \label{s:back}
Low frequency resolvent expansions have a long history in scattering theory, explicitly since the early results of MacCamy \cite{maccamy} and implicitly even before. 
 Because in dimension two
there are several types of resonance and eigenvalue at zero, each playing a different role, this dimension is more challenging than  any other -- compare the papers \cite{JeKa,je80,je84,BGD,jn} which study 
this problem for Schr\"{o}dinger operators with  real-valued potentials decaying sufficiently fast at infinity in dimensions respectively three, at least five, four, two  (with an additional restriction) and dimension
no greater than two.  

One of our contributions in Theorem \ref{t:nre} is to give a  statement with explicit leading terms for general black-box operators. Such a 
unified and explicit result, without need for separate cases for presence of different kinds of resonance and eigenvalue at $0$, 
 seems to be new even just for Schr\"odinger operators, and we handle them together with obstacle problems and many other examples.    In the absence of elements of the null space of $P$ which decay at infinity, Theorem \ref{t:resexp} refines these results, with a more direct proof, and generalizes them to certain non-self-adjoint operators.

By comparison, Vainberg \cite{vai89} has very general abstract results and many references, but the expansions there are not as explicit as ours. Explicit results corresponding to Theorem~\ref{t:resexp} were obtained using the framework of  \cite[Chapter X]{vai89} by Kleinman and Vainberg \cite{kv} in the setting of  exterior differential operators. More recently, another abstract framework was developed by M\"uller and Strohmaier in \cite{MuSt}, and our methods (in particular the proof of Proposition~\ref{p:rexpbasic}) have some commonality with it. It was applied to obtain explicit results, similar to but not as strong as our Theorem \ref{t:resexp}, for differential forms on manifolds by Strohmaier and Waters in \cite{sw}. Some analogous results have been obtained for magnetic Pauli and Dirac operators in \cite[Theorem~6.5]{kovarik}.
%Even in the narrower setting of Schr\"odinger operators, our Theorem \ref{t:nre} seems to be the first statement of explicit leading terms without needing separate cases for presence and absence of different kinds of resonance and eigenvalue at zero.

The structure of the resolvent expansion at zero has
consequences for the low-energy behavior of the spectral measure, scattering matrix, and scattering phase -- see e.g. \cite{JeKa,ChDaob, gmwz}.  Moreover, as discussed in Section \ref{s:wave}, it
impacts the 
  long-time asymptotics of solutions of the wave or time-dependent Schr\"odinger equation, see, e.g. \cite{JeKa, vai89, schlag, ergr, dz, hintz}. %It has recently been used in an index-theoretic approach to Levinson's theorem on the number of bound states \cite{rtz, anrr}; see \cite{bgdw} for earlier results. 
We explore some of these applications for the 
  scattering matrix and scattering phase of the Dirichlet 
  Laplacian in \cite{ChDaob}, for resonance and eigenvalue behavior near zero in \cite{CDG}, for Aharonov--Bohm operators in \cite{CDY1}, and  for wave evolution in \cite{CDY2}.
 
Wave decay results, such as  \eqref{e:freew} and \eqref{e:w2} and related results, have been much studied for decades. The field is too wide-ranging to survey here. Let us mention the seminal work of Morawetz \cite{m61}, and the surveys in  \cite[Epilogue]{lp89}, \cite[Chapter X]{vai89}, \cite{dr}, \cite{tat}, \cite{dz}, \cite{vasy}, \cite{sch21}, \cite{klainerman}. %, and the recent result \cite{hintz}, which uses a low energy resolvent expansion to obtain a powerful wave decay in the spirit of \eqref{e:freew} and \eqref{e:w2} for a general relativity problem. 

%Expansions like \eqref{e:freew} and \eqref{e:w2} require, in addition to the low energy analysis of this paper, high energy control of the resolvent as well. For Schr\"odinger operators this is straightforward (Dyatlov--Zworski) but for more general problems this depends on the geometry of the perturbation. A recent work in obstacle scattering, the setting of \cite{ChDaob}, is Vacossin ??, who proves ...

%Low energy resolvent expansions have attracted recent work in related areas.  A wave decay result for a general relativity problem, in the spirit of \eqref{e:freew} and \eqref{e:w2}, was obtained using a low energy resolvent expansion by Hintz in . 

Another recent direction is discussed in the survey \cite{ADH} on subwavelength resonator systems. These show strong scattering of waves by small objects, by analyzing at low energies a non-self-adjoint problem which does not fit the assumptions of Section \ref{s:bb}, but is still amenable to our methods: see the Remark following Proposition \ref{p:rexpbasic} for a resolvent expansion in their setting. 

\subsection{Plan of the paper}

In Section \ref{s:prelim} we introduce our abstract framework, give examples, and prove some preliminary lemmas. In Section \ref{s:rs0} we prove Theorem \ref{t:resexp}, establishing our best and most direct resolvent expansions under the mild growth assumption \eqref{e:uhpbd}. The proof of Theorem \ref{t:resexp} has significant overlap with the proof of \cite[Theorem 1]{ChDaob}; moreover, the latter is a special case of the former.  %In Section \ref{s:rs0} 
We accordingly refer the reader to corresponding parts of \cite{ChDaob} for certain details.

In Section \ref{s:fsrb} we prove our most general resolvent expansions. In Section \ref{s:renobd}, we refine the Section~\ref{s:fsrb} expansions in   the self-adjoint case: Theorem \ref{t:nre} follows from Propositions \ref{p:resexpsa},  \ref{p:b01form}, and~\ref{p:B-2negk}. Further information about negative powers of $\log \lambda$ is contained in Propositions~\ref{p:noneglog} and~\ref{p:easysum}. Sections \ref{s:fsrb} and \ref{s:renobd} do not use anything from Section \ref{s:rs0}.

\subsection{Notation and conventions}

\begin{itemize}
\item The spaces $\mathcal G_l$, $\mathcal G_{\log}$ of polynomially bounded solutions to $Pu=0$ are defined in \eqref{e:gldef}. 
\item   $\mathcal{P}_e$ is projection onto the zero eigenfunctions of $P$, i.e.  onto $\mcg_{-2}$.
\item $\Lambda$ is the Riemann surface of $\log \lambda$.
\item  $ {\bf 1}_{> \rho}$ is the characteristic function  of the set $\{x \colon |x| > \rho\}$.
\item The constant $\gamma_0 = \log 2 - \gamma + \frac {\pi i }2$ is defined in terms of Euler's constant $\gamma = -\Gamma'(1) = 0.577\dots$
\item The free resolvent notations $R_0(\lambda)$, $\tilde R_0(\lambda)$ and $R_{2j,k}$ are introduced in Section \ref{s:freeres}.
\item We use the complex inner product $a \cdot b = a^1\bar b^1 +a^2 \bar b^2$ for $a, b \in \mathbb C^2$.
\item The black-box notations $P$, $\mathcal H$, $\mathcal D$, $\mathcal B$, including the tensor product $\otimes$ and involution $u \mapsto\bar u$, are introduced in Section \ref{s:bb}.
\item The Fourier coefficients  $v_l, \; \fc_0,\; \fc_{\log}$ are introduced in \eqref{e:fcexp}.
%$a_l,\; b_l,\; v_l=(a_l,b_l), \; \fc_0,\; \fc_{\log}$ are introduced in \eqref{e:fcexp}.
\item The cutoff $\chi_1 \in C_c^\infty(\mathbb R^2)$, which  is $1$ near $\mathcal B$ and depends only on $|x|$, is introduced in the beginning of Section \ref{s:bp}. 
\item The radius $r_1>0$ is always large enough that $\chi_1(x) = 0$ when $|x|>r_1-1$, and sometimes taken larger so as to satisfy additional requirements.
\item The boundary pairing $\mathbb B$ is defined in \eqref{eq:bdrypair}.
%\item The spaces $\mathcal F_l$ and $\mathcal F'_l$ of asymptotics of functions as $|x| \to \infty$ are defined in the equations leading up to \eqref{eq:F'}.
\item The operators $K_1$, $K(\lambda)$, $F(\lambda)$ used in Vodev's identity are defined in the equations from \eqref{e:k1def} to \eqref{e:fdef}.
\item The operators $\tilde F(\lambda)$, $F_{2j,k}$, $A(\lambda)$, $D(\lambda)$, $D_{2j,k}$, the function $w$, and the complex numbers $\alpha(\lambda)$, $\alpha_{2j,k}$ are defined in \eqref{e:fseries} and Lemma \ref{l:rfk}.
\item The notation $U_w$,  where $w\in \Complex^2$, denotes an element of $\mcg_{-1}$ obeying $U_w(x)= w\cdot x/|x|^2+O(|x|^{-2})$.
%$v_{-1}(U_w) =(a_{-1}(U_w), b_{-}(U_w))= w$. 
Note that such a $U_w$ does not necessarily exist for arbitrary $w \in \mathbb C^2$, and when it does exist it is not necessarily unique.
\end{itemize}

\section{Preliminaries}\label{s:prelim}

\subsection{The free resolvent}\label{s:freeres}

Let $-\Delta$ be the nonnegative Laplacian on $\mathbb R^2$ and  $R_0(\lambda) = (-\Delta - \lambda^2)^{-1}$ its resolvent for $\lambda$ in the upper half plane. We briefly review some standard facts about $R_0$; see Section~2A of \cite{ChDaob} for references. The integral kernel of $R_0(\lambda)$ is given %in terms of Bessel functions 
by 
\begin{equation}\label{e:h10ser}
 R_0(\lambda)(x,y)=  \frac{i}{4} H^{(1)}_0(\lambda|x-y|), \quad
 H^{(1)}_0(s) = \frac{2i}\pi \sum_{m=0}^\infty  \left(\log  s - \gamma_m\right)  \frac{(-s^2/4)^m}{(m!)^2},
\end{equation}
where 
\[
 \gamma_0=\log 2 -\gamma +\tfrac{\pi i}{2},  \quad \gamma = -\Gamma'(1) = 0.577\dots,\quad \gamma_m = \gamma_{m-1} + \tfrac 1m \text{ for } m \ge 1.
\]
For any $f \in L^2_{c}(\mathbb R^2)$ and $\lambda$ in the upper half plane,
\begin{equation}\label{e:r0exp}
 R_0(\lambda) f(x) = O(e^{-|x|\im \lambda}), \qquad \text{as } |x| \to \infty.
\end{equation}

It follows from  \eqref{e:h10ser} that   $R_0(\lambda)\colon L^2_{c}(\mathbb R^2)\to H^2_{\text{loc}}(\mathbb R^2)$ continues holomorphically from the upper half plane to $\Lambda$, the Riemann surface of $\log \lambda$. For each $\lambda \in \Lambda$  we write
\begin{equation}\label{e:r0series}
 R_0(\lambda) = \sum_{j=0}^\infty \sum_{k=0}^1  R_{2j,k}  \lambda^{2j}(\log \lambda)^k =  R_{01}\log \lambda + \tilde R_0(\lambda),
\end{equation}
where $\tilde R_0(\lambda)$ is defined by the equation, the $R_{2j,k}$ are  operators $L^2_{c}(\mathbb R^2)\to H^2_{\text{loc}}(\mathbb R^2)$, and the series converges absolutely, uniformly on sectors near zero. More explicitly,  the integral kernels of the leading terms are as follows, with asymptotics valid  for $y$ in a fixed compact set
and $|x|\rightarrow \infty$:
  \begin{equation} \begin{split}\label{eq:R2jk}
R_{01}(x,y)&=-\frac{1}{2\pi}  \\
R_{00}(x,y)&=-\frac{1}{2\pi}(\log |x-y| - \gamma_0)  
 \\ & = - \frac 1 {2\pi} (\log|x| -\gamma_0) +  \frac 1 {4\pi}\sum_{m=1}^\infty \frac 1 {m|x|^{2m}}   (2x\cdot y - |y|^2)^m   \\
% & =- \frac 1 {2\pi} (\log|x| -\gamma_0) +\frac{1}{4\pi |x|^2}(2x\cdot y-|y|^2) +\frac{1}{2\pi}\left(\frac{x\cdot y}{|x|^2}\right)^2+O(|x|^{-3})  \\
R_{21}(x,y)& = \frac{1}{8\pi}|x-y|^2=\frac{1}{8\pi}(|x|^2-2x\cdot y+|y|^2)  \\
R_{20}(x,y)& = \frac{1}{8\pi}\big(\log|x-y|- \gamma_0 - 1\big) |x-y|^2  \\ & = 
\frac{1}{8\pi}\Big(\log|x|- \gamma_0-1-\frac{x\cdot y}{ |x|^2}\Big) |x-y|^2+O(|x|^{-2})|x-y|^2  \\
R_{41}(x,y)&%= \frac{-1}{128\pi}|x-y|^4 
= \frac {-1}{128\pi} ((|x|^2 + |y|^2)^2 - 4 |x|^2 (x\cdot y) + 4 (x \cdot y)^2  - 4 (x\cdot y)|y|^2).
\end{split}\end{equation}

 %The leading coefficient is
%\[
 %R_{01} = - \frac{1}{2\pi} (1 \otimes 1), 
%\]
%where $1 \otimes 1$ means the operator mapping $\varphi$ to the constant $\int_{\mathbb R^2} \varphi$. The next term has integral kernel
%\[
 %R_{00}(x,y) =  - \frac 1 {2\pi} \log|x-y| + \frac{\log 2 - \gamma}{2\pi} + \frac i 4.
%\]
%Since $\log|x-y| = \frac 12 \log |x-y|^2 = \log |x| + \frac 12 \log(1 - \frac {2x\cdot y}{|x|^2} + \frac{|y|^2}{|x|^2})$, we obtain
%\begin{equation}\label{e:r00xinf}
% R_{00} f\, (x) = \Big(- \frac 1 {2\pi} \log|x| +  \frac{\log 2 - \gamma}{2\pi} + \frac i 4\Big) \int_{\mathbb R^2} f +  \frac 1 {4\pi}\sum_{m=1}^\infty \frac 1 {m|x|^{2m}} \int  (2x\cdot y - |y|^2)^m f(y)\,dy,
%\end{equation}
%for $|x|$ large enough, when $f \in L^2_{\text{comp}}(\mathbb R^2)$.

\subsection{Black-box setup and notation}\label{s:bb}
The operator $P$ will either be a black-box perturbation of $-\Delta$ on $\mathbb R^2$ in the sense of \cite{sz}; see also \cite[Chapter 4]{dz}, or a %slight
variant of this that allows for certain non-self-adjoint operators as described below.  
 We briefly review the basic definitions and results that we will need. Let $\mathcal H$ be a complex Hilbert space with orthogonal decomposition
\[
 \mathcal H = \mathcal H_0 \oplus L^2(\mathbb R^2 \setminus \mathcal{B}),
\]
where $\mathcal H_0$ is a separable Hilbert space and $\mathcal{B}$ is a fixed ball in $\mathbb R^2$. Define similarly
\[
  \mathcal H_{\text{loc}} = \mathcal H_0 \oplus L^2_{\text{loc}} (\mathbb R^2 \setminus \mathcal{B}), \qquad \text{and} \qquad    \mathcal H_{c} = \mathcal H_0 \oplus L^2_{c} (\mathbb R^2 \setminus \mathcal{B}).
\]
Let $u \mapsto u|_{\mathcal B}$ and $u \mapsto u|_{\mathbb R^2 \setminus \mathcal B}$ denote the orthogonal projections $\mathcal H \to \mathcal H_0$ and  $\mathcal H \to L^2(\mathbb R^2 \setminus \mathcal{B})$, and denote their natural extensions $\mathcal H_{\text{loc}} \to \mathcal H_0$ and  $\mathcal H_{\text{loc}} \to L^2_{\text{loc}}(\mathbb R^2 \setminus \mathcal{B})$ in the same way. For $\chi$ a function on $\mathbb R^2$ which is equal to a constant $c$ near $\mathcal{B}$, we define
\[
 \chi u = c u|_{\mathcal B} + \chi u|_{\mathbb R^2 \setminus \mathcal B}.
\]

We assume that at least one of (A1) or (A2) holds.

A1. For the ``classic" black-box operator, 
let $P$ be a self-adjoint operator on $\mathcal H$ with dense domain $\mathcal D$. We assume that $P$ is lower semi-bounded, that $u \mapsto ((P+z_0)^{-1}u)|_{\mathcal B}$ is compact for some $z_0$ with $\im z_0>0.$ 

A2. Alternatively, in order to allow certain non-self-adjoint operators $P$, we assume (as in equations (4.4.10) and (4.4.11) of \cite{dz}) that there is an involution defined on $\mathcal H$,
 $u\mapsto \bar u$, so that 
if $u\in L^2(\Real^2 \setminus \mathcal{B})$ is $0$ near $\mathcal{B}$, then $\bar{u}$ is the complex conjugate of $u$ as usual.  Moreover, we assume that if
 $c\in \Complex$ and $u\in \mathcal{H}$, then $\overline{cu} = \bar{c} \bar{u}$, and $\langle \bar u, \bar v\rangle_\mch = \langle v, u \rangle_\mch$.  An example to 
keep in mind is of course $\mch =L^2(\Real)$ with the involution  given by complex conjugation.   We need also some hypotheses on the operator $P$: it has dense domain $\mathcal{D}\subset \mathcal{H}$, if $u\in 
\mathcal {D}$ then $\bar u$ is in the domain of $P^*$, with 
$\overline{P^* \bar u}=Pu$.  Moreover, we assume  there is an $M>0$ so that  for all $u\in \mathcal{D}$, $\re \langle P u,u\rangle >-M\|u\|^2_\mch$, and $u \mapsto ((P+z_0)^{-1}u)|_{\mathcal B}$ is compact for some $z_0$ with $\im z_0>0.$

We also assume  $\mathcal D|_{\mathbb R^2 \setminus \mathcal B} \subset H^2(\mathbb R^2 \setminus \mathcal{B})$, that $(Pu)|_{\mathbb R^2 \setminus \mathcal B} = (-\Delta u)|_{\mathbb R^2 \setminus \mathcal B}$, and that if $f \in H^2(\mathbb R^2 \setminus \mathcal{B})$ and $f = 0$ near $\mathcal{B}$ then $f \in \mathcal D$. Put $\mathcal D_{\text{loc}} = \mathcal D|_{\mathcal B} \oplus H^2_{\text{loc}}(\mathbb R^2)$.   
We use a tensor product notation analogous to \cite[$(2.2.19)$]{dz}:
\[
 (g \otimes h)f = g \langle f, h \rangle_{\mathcal H},
\]
and we use the same inner product notation for pairing vectors in $\mathcal H_{\text{loc}}$ with vectors in $\mathcal H_{\text{c}}$.
By statements like
\[
u(x) = O(|x|^l), \qquad \text{or} \qquad u = O(|x|^l),
\]
as $|x| \to \infty$, as in \eqref{e:gldef}, we mean $u|_{\mathbb R^2 \setminus \mathcal B}(x) = O(|x|^l)$.  If $u\in \mathcal{D}_{\loc}$, by $Pu$ we mean 
$Pu=-\Delta (1-\chi)u+P\chi u\in \mathcal{H}_{\loc}$, where $\chi\in C_c^\infty(\Real^2)$ is $1$ in a neighborhood of $\mathcal{B}$.

We can now make sense of the spaces $\mcg_l$ and $\mcg_{\log}$ of functions in $\mathcal D_{\text{loc}}$ annihilated by $P$, defined in \eqref{e:gldef}. Recall that such functions have a large $|x|$ expansion in polar coordinates: if $\phi \in \bigcup_{l}\mcg_l$, then  there are coefficients $\fc_0(\phi)$, $\fc_{\log}(\phi)\in \Complex$, and $v_l(\phi)\in \Complex^2$, such that for $r$ large enough we have
\begin{equation}\label{e:fcexp}
\phi(r \cos \theta , r \sin \theta)=\fc_0(\phi)+\fc_{\log}(\phi)\log r + \sum_{l \in \mathbb  Z \setminus\{0\}} v_l(\phi)  \cdot (\cos(|l|\theta),\sin(|l|\theta)) r^{l}.
\end{equation}

Under the above assumptions, $R(\lambda)=(P-\lambda^2)^{-1}\colon \mathcal H \to \mathcal D$ is meromorphic for $\lambda$ in the upper half plane, and $R(\lambda)$ continues meromorphically as an operator from $\mathcal H_{\text{comp}}$ to $\mathcal D_{\text{loc}}$ to $\lambda \in \Lambda$; we  review these facts in Section \ref{s:vod} below.

We record the following straightforward lemma for future reference.
\begin{lem}\label{l:resA2}
Suppose the black-box operator $P$ and the Hilbert space $\mch$ satisfy (A2).  Let $f\in \mch$ and take $\im \lambda>0$, away from any square roots of eigenvalues of $P$. Then  $(R(\lambda))^*f= \overline{R(\lambda)\bar f}$.
\end{lem}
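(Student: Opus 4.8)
The plan is to relate $(R(\lambda))^*$ to $R(\bar\lambda)$ via the definition of the adjoint, then use the defining property of the involution from (A2), namely $\overline{P^*\bar u} = Pu$, to convert statements about $P^*$ into statements about $P$. First I would fix $\lambda$ with $\im\lambda>0$ and away from square roots of eigenvalues, so that both $R(\lambda) = (P-\lambda^2)^{-1}$ and $(P^*-\bar\lambda^2)^{-1}$ are bounded operators on $\mathcal H$; note that since $\re\langle Pu,u\rangle > -M\|u\|^2$ and $P^*$ has the analogous bound, $\lambda^2$ and $\bar\lambda^2$ lie in the resolvent sets of $P$ and $P^*$ respectively once $|\lambda|$ is suitable, with the finitely many exceptional points excluded by hypothesis. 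The standard identity $\big((P-\lambda^2)^{-1}\big)^* = (P^*-\bar\lambda^2)^{-1}$ then holds as bounded operators on $\mathcal H$.

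Next I would unwind what the involution says about $(P^*-\bar\lambda^2)^{-1}$. Given $f\in\mathcal H$, set $u = R(\lambda)\bar f \in \mathcal D$, so $(P-\lambda^2)u = \bar f$. By (A2), $\bar u$ lies in the domain of $P^*$ and $\overline{P^*\bar u} = Pu$. Applying the involution to $(P-\lambda^2)u = \bar f$ and using $\overline{cu} = \bar c\,\bar u$ together with $\overline{Pu} = \overline{\overline{P^*\bar u}} = P^*\bar u$ (the involution is its own inverse, which follows from it being an involution), I get $(P^* - \bar\lambda^2)\bar u = f$. Hence $\bar u = (P^*-\bar\lambda^2)^{-1}f = (R(\lambda))^* f$, i.e. $(R(\lambda))^* f = \bar u = \overline{R(\lambda)\bar f}$, which is exactly the claim.

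The only points needing a little care are bookkeeping ones: checking that $\bar\lambda^2$ is genuinely in the resolvent set of $P^*$ for $\im\lambda>0$ outside the exceptional set (so that the inverse exists and the adjoint identity is literally an identity of bounded operators, not just densely defined), and confirming that the involution is an involution in the strong sense $\bar{\bar u} = u$ so that $\overline{P^*\bar u} = Pu$ can be rearranged to $P^*\bar u = \overline{Pu}$. Neither is a real obstacle — the first follows from the numerical range bound on $P^*$ inherited from the hypothesis on $P$ (or directly, since $((P^*)+z_0)^{-1}$ statements mirror those for $P$ under the involution), and the second is part of the meaning of "involution" — but they are the places where the argument could superficially look incomplete, so I would state them explicitly. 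I expect the main (very mild) obstacle to be simply making sure the domain of $P^*$ and the resolvent-set membership are handled cleanly rather than glossed over.
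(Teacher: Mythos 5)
Your proof is correct and is essentially the same as the paper's: both hinge on applying the involution to $(P-\lambda^2)R(\lambda)\bar f = \bar f$ via the (A2) identity $\overline{P^*\bar u}=Pu$ and then identifying the result with the action of $(R(\lambda))^* = (P^*-\bar\lambda^2)^{-1}$. The paper phrases it as a comparison of two operator equations rather than going through $u=R(\lambda)\bar f$ explicitly, but the content is identical, and your extra remarks about $\bar\lambda^2$ lying in $\rho(P^*)$ and $\bar{\bar u}=u$ are just the same bookkeeping made explicit.
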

\begin{proof}
Note that our assumptions (A2) imply that 
$$ (P^*-\overline{\lambda}^2) \overline{R(\lambda) f}=\overline{(P-\lambda^2)R(\lambda)f}= \bar f$$
so that $(P^*-\overline{\lambda}^2)\overline{R(\lambda)\bar f}=f$.  Comparing this to $(P^*-\overline{\lambda^2})(R(\lambda))^*=I$ gives $(R(\lambda))^*f= \overline{R(\lambda)\bar f}$.
\end{proof}

For our results in Section \ref{s:rs0} we will  need to assume that the resolvent grows mildly at $0$ in the sense of the bound
\begin{equation}\label{e:uhpbd}
 \|\chi R(i\kappa) \chi \|_{\mathcal H \to \mathcal D} =o(\kappa^{-2} (\log \kappa)^{-1}), \  \text{  as } \kappa \to 0 \text{ along the positive real axis,}
\end{equation}
for any $\chi \in C_c^\infty(\mathbb R^2)$ which is 1 near $\mathcal B$.  Corollary \ref{c:nspchar} shows that if $P$ has neither eigenvalue $0$ nor a $p\,$-resonance state (i.e. if $\mcg_{-1} = \{0\}$), then the 
estimate \eqref{e:uhpbd} holds.  Below we consider some examples in which one can rather easily check that there are no nontrivial elements of the 
null space of  $P$  which decay at infinity.

%For self-adjoint $P$ we give less refined results without this assumption in 
%Section \ref{s:renobd}.

\subsection{Examples}\label{s:examples} We now discuss further the basic examples we introduced earlier.

\subsubsection{Schr\"{o}dinger operators}\label{s:schr} Let  $P = -\Delta  + V$, with $V\in L^\infty_c(\Real^2;\Complex)$.     If either $\re V\geq 0$   or $\pm \im V \ge 0$ with $\im V \not \equiv 0$, then we can quickly show that $\mcg_{-1}=\{0\}$,
%$P$ cannot have eigenvalue $0$ or a $p\,$-resonant state, 
and thus  by
Corollary~\ref{c:nspchar}  the resolvent satisfies the estimate (\ref{e:uhpbd}).  Indeed, if $u \in \mcg_{-1}$, then 
$$0=\int_{|x|<\rho} Pu \overline{u} dx= \int_{|x|<\rho} (|\nabla u|^2 +V |u|^2)dx-\int_{|x|=\rho} \partial_r u \overline{u} dS_x.$$
By \eqref{e:fcexp}, $\lim_{\rho \rightarrow \infty }\int_{|x|=\rho} \partial_r u \overline{u} dS_x=0$.
Hence 
\begin{equation}\label{e:nablauvu} 
\int_{\Real^2} (|\nabla u|^2 +V |u|^2)dx = 0. 
\end{equation}
If $\re V\geq 0$, then taking the real part of \eqref{e:nablauvu} shows that $\nabla u\equiv 0$. Since $u(x)=O(|x|^{-1})$, we get $u\equiv 0$.   If $\pm \im V\geq 0$, then taking the imaginary part of \eqref{e:nablauvu} shows that $\im V |u|^2 \equiv 0$. Using  $\im V \not \equiv 0$ and  unique continuation \cite[Corollary 2]{robbiano}, we again get  $u\equiv 0$.

%If $\{x \colon \im V(x) \ne 0 \}$ has nonempty interior then $U\equiv 0$ follows by Carleman's original results on unique continuation: \cite{carleman}, and see also \cite[Theorem 6.3]{jk}. The weaker condition that $\{x \colon \im V(x) \ne 0 \}$ has positive measure also suffices if we use the stronger unique continuation result of \cite{gl}.

If  $\re V\geq 0$, one can prove the resolvent estimate \eqref{e:uhpbd} directly, without resorting to Corollary~\ref{c:nspchar}. One way is to argue as in Lemma 2.1 of \cite{ChDaob}: if $u = R(i\kappa)\chi f$, then
\begin{equation}\label{e:ladyzhenskaya}
 \|\nabla u\|^2_{L^2} + \kappa^2 \|u\|^2_{L^2} \le \langle \chi f,  u \rangle_{L^2} \le %\|\chi f\|_{L^{4/3}} \|u\|_{L^4}\le
 \|\chi\|_{L^4} \|f\|_{L^2} \|\nabla u\|_{L^2}^{1/2} \| u\|_{L^2}^{1/2},
\end{equation}
which implies the stronger estimate $\|R(i\kappa) \chi \|_{\mathcal H \to \mathcal D} =O (\kappa^{-3/2})$.

We remark that the  paper \cite{CDG} studies low-energy behavior of resonances and the scattering phase for Schr\"{o}dinger operators, with explicit calculations for 
 Schr\"{o}dinger operators with circular well potentials.

\subsubsection{Obstacles}\label{s:obst} Let $P=-\Delta$ be the Dirichlet or Neumann Laplacian $-\Delta$ on $\Omega = \mathbb R^2 \setminus \mathcal O$, where $\mathcal O$ is a compact set with $C^\infty$ boundary and $\Omega$ is connected. 
%Then \eqref{e:uhpbd} follows from \cite[Lemma 16]{m75} (see also \cite[Appendix B]{b98}). [[Double-check and elaborate here! Maybe discuss lower regularity?]]

To determine whether $P$ has a zero resonance or eigenvalue, we compute as in the proof of Theorem 4.19 of \cite{dz}. Let $u \in \mcg_0$, and take $\rho$ large enough that $|x| < \rho$ for all $x \in \mathcal O$. As in Section \ref{s:schr}, we have
\[
 0 \le \int_{\{x \in \Omega \colon |x| \le \rho\}} |\nabla u|^2 = - \int_{|x|=\rho} (\partial_r u) \bar u  = O(\rho^{-1}), \qquad \text{as }\rho \to \infty,
\]
and hence $u$ is constant. Thus $u$ can be nontrivial if and only if the boundary condition is Neumann. We conclude that $P$ 
has an $s$-resonance in the Neumann case, but not in the Dirichlet case. In either case there is no $p\,$-resonance and no zero eigenvalue, so the estimate (\ref{e:uhpbd}) holds by Corollary~\ref{c:nspchar}.

In the Dirichlet case  (\ref{e:uhpbd}) also follows directly from \eqref{e:ladyzhenskaya}, and it holds moreover for any obstacle $\mathcal O$ which is not polar: see \cite[Section 2.2]{ChDaob} for more details.

\subsection{Boundary pairing}\label{s:bp}

It is convenient to introduce a boundary pairing, similar to that from \cite[Section 6.1]{tapsit}, whose notation we now adapt to our setting. Let $\chi_1 \in C_c^\infty(\mathbb R^2;[0,1])$ be such that $\chi_1$ is $1$ near $\mathcal{B}$ and $\chi_1$ depends only on $|x|$.
For $r_1>0$ such that $\chi_1(x)=0$ for $|x|>r_1-1$, and $u, v\in H^2(\{ x\in \Real^2: \left| |x|-r_1\right|<1\})$, define
\begin{equation}\label{eq:bdrypair}
\bp(u,v) = \bp_{r_1}(u,v)=\int_{|x|=r_1} u\partial_r\overline{v}-(\partial_r u)\overline{v}.
\end{equation}
The circle $|x|=r_1$ plays the role of the boundary of the interaction region.\footnote{In \cite[Section 6.1]{tapsit} the `boundary' in `boundary pairing' refers to a boundary at infinity. See also \cite[Section 4.4.3]{dz} for another related usage.}

%Let $\chi\in C_c^\infty(\Real^2;[0,1]) $ be $1$ in a neighborhood of $\mathcal{B}$ and have its support in $\{x\in \Real^2: |x|<r_1\}$.  
Define for $u_1,\; u_2 \in \mch_{\loc}$,
\begin{equation}\label{eq:ipr1}
\langle u_1,u_2 \rangle_{|x|<r_1}= \langle \chi_1^{1/2} u_1, \chi_1 ^{1/2}u_2   \rangle + \int_{|x|<r_1} (1-\chi_1) u_1 \overline{ u_2} dx,
\end{equation}
and define similarly $\langle u_1,u_2 \rangle_{|x|>r_1},\; \langle u_1,u_2 \rangle_{r_1<|x|<r_2}$. Note that this  definition is independent of the choice of $\chi_1$. 

\begin{lem} \label{l:bpid} Let $u_1\in \mathcal{D}_{\loc}$, $u_2\in \mch_{\loc}$ and suppose for each $\chi \in C_c^\infty(\Real^2)$ which is constant near $\mathcal{B}$, $u_2 \chi $ is in the domain of $P^*$.  Then
$$\bp(u_1,u_2)= \langle P u _1,u _2\rangle_{|x|<r_1}-\langle u_1, P^*u_2 \rangle_{|x|<r_1}.$$
\end{lem}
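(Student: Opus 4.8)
The plan is to reduce the identity to Green's formula (integration by parts) on the annular region where $P=-\Delta$, together with the definition of $P^*$ on the part of $\mathcal H$ supported near $\mathcal B$. First I would fix a cutoff $\psi\in C_c^\infty(\mathbb R^2)$ which is $1$ on a neighborhood of $\overline{\mathcal B}$ and supported in $\{|x|<r_1\}$; by hypothesis $\psi u_2$ lies in the domain of $P^*$, and $\psi u_1\in\mathcal D$ since $u_1\in\mathcal D_{\loc}$. Writing $u_1 = \psi u_1 + (1-\psi)u_1$ and $u_2 = \psi u_2 + (1-\psi)u_2$, I would split both inner products $\langle\cdot,\cdot\rangle_{|x|<r_1}$ and the boundary integral $\mathbb B(u_1,u_2)$ accordingly. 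The boundary term $\mathbb B$ only sees values near $|x|=r_1$, where $\psi\equiv 0$, so $\mathbb B(u_1,u_2)=\mathbb B((1-\psi)u_1,(1-\psi)u_2)$.

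The main computation is then on the ``outer'' pieces, where everything reduces to genuine functions in $H^2$ of the annulus $\{r_1-1<|x|<r_1+1\}$ (intersected with $\{|x|<r_1\}$) and $P=-\Delta=P^*$ acts as the honest Laplacian. There I would apply Green's second identity on $\{|x|<r_1\}\setminus\supp\psi$:
\[
\int_{|x|<r_1}\big((-\Delta v_1)\overline{v_2} - v_1\overline{(-\Delta v_2)}\big)\,dx = \int_{|x|=r_1}\big(v_1\partial_r\overline{v_2} - (\partial_r v_1)\overline{v_2}\big),
\]
with $v_i=(1-\psi)u_i$, noting there is no contribution from the inner boundary $\partial\supp\psi$ because $1-\psi$ vanishes there to all orders (it is identically $1$ outside a compact set and identically $0$ near $\mathcal B$, being locally constant near both). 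This yields $\mathbb B(u_1,u_2)=\langle P(1-\psi)u_1,(1-\psi)u_2\rangle_{|x|<r_1} - \langle(1-\psi)u_1,P^*(1-\psi)u_2\rangle_{|x|<r_1}$ for the outer pieces.

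It remains to check that the cross terms and the ``inner'' piece $\langle P\psi u_1,\psi u_2\rangle_{|x|<r_1} - \langle\psi u_1,P^*\psi u_2\rangle_{|x|<r_1}$ cancel. For the pure inner piece this is exactly the defining property of the adjoint: $\langle P\psi u_1,\psi u_2\rangle_{\mathcal H} = \langle\psi u_1,P^*\psi u_2\rangle_{\mathcal H}$ since $\psi u_1\in\mathcal D$ and $\psi u_2\in\mathcal D(P^*)$, and on the support of $\psi$ the weighted inner product $\langle\cdot,\cdot\rangle_{|x|<r_1}$ agrees with $\langle\cdot,\cdot\rangle_{\mathcal H}$. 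For the cross terms, one uses that $P\psi u_1$ and $P^*\psi u_2$ differ from $-\Delta\psi u_i$ only inside $\supp\psi$ (where the other factor $1-\psi$ need not vanish, but where we can again integrate by parts on the transition annulus $\{\psi\ne 0,1\}$), while $(1-\psi)u_i$ and $\psi u_i$ are supported in overlapping regions where $P$ acts as $-\Delta$; a further application of Green's formula on the transition annulus shows these cross contributions combine to zero, the would-be boundary terms on $|x|=r_1$ vanishing because $\psi\equiv 0$ there. The main obstacle is purely bookkeeping: being careful that ``$Pu_1$'' for $u_1\in\mathcal D_{\loc}$ is defined via a cutoff (as in the excerpt, $Pu_1 = -\Delta(1-\chi)u_1 + P\chi u_1$) and that this definition is compatible with the splitting above — i.e. that the answer does not depend on the choice of $\psi$, which follows from the independence of $\langle\cdot,\cdot\rangle_{|x|<r_1}$ on $\chi_1$ noted after \eqref{eq:ipr1} and from Green's formula absorbing any discrepancy into a boundary integral over a circle where all cutoffs are locally constant.
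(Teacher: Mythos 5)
Your overall strategy---reduce to Green's formula on the region where $P=-\Delta$, handle the piece near $\mathcal B$ via the defining property of the adjoint, and check cross terms cancel---is exactly the paper's approach (the paper's proof just says ``This follows in a straightforward way from Green's formula''). But there is a genuine slip in the outer piece. You apply Green's formula on $\{|x|<r_1\}\setminus\supp\psi$ with $v_i=(1-\psi)u_i$, and claim the inner boundary $\partial\supp\psi$ contributes nothing because ``$1-\psi$ vanishes there to all orders.'' This is backwards: on $\partial\supp\psi$ it is $\psi$ that vanishes (to all orders), so $1-\psi\equiv 1$ there, and $v_i=u_i$ near that circle. The boundary term at $\partial\supp\psi$ is therefore a genuine, generally nonzero, pairing. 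Relatedly, on the domain you chose the integrand is just $(-\Delta u_1)\bar u_2-u_1\overline{(-\Delta u_2)}$ and is not the integrand of $\langle P(1-\psi)u_1,(1-\psi)u_2\rangle_{|x|<r_1}-\langle(1-\psi)u_1,P^*(1-\psi)u_2\rangle_{|x|<r_1}$, whose support also includes the transition annulus $\{0<\psi<1\}$.

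The fix is small and restores your argument: apply Green's formula for $v_i=(1-\psi)u_i$ over the \emph{entire} region $\{|x|<r_1\}$ exterior to $\mathcal B$, not just outside $\supp\psi$. Since $\psi\equiv 1$ near $\mathcal B$, the $v_i$ vanish identically near $\mathcal B$, so the inner boundary contribution is genuinely zero, and $\partial\supp\psi$ is interior to the domain so no boundary term arises there. At $|x|=r_1$, where $\psi\equiv 0$, you recover $\mathbb B(u_1,u_2)$. Combined with the adjoint identity for $\langle P\psi u_1,\psi u_2\rangle-\langle\psi u_1,P^*\psi u_2\rangle$ (which is $0$, since $\psi u_1\in\mathcal D$, $\psi u_2\in\mathcal D(P^*)$, and both $P\psi u_1$, $P^*\psi u_2$ are supported in $\{|x|<r_1\}$) and the cancellation of the cross terms by Green's formula on the transition annulus (which you describe correctly), this gives the lemma.
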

\begin{proof}
This follows in a straightforward way from Green's formula.
\end{proof}

The next two results are closely related.

\begin{lem}\label{l:sGreens} Let $\chi_1 \in C_c^\infty(\Real^2)$ be one in a neighborhood of $\mathcal{B}$, and such that $\chi_1(x)=0$ for $|x|>r_1-1$.
Let $u\in \mathcal{D}_{\loc}$, $v\in H^2_{\loc}(\Real^2)$.  Then
 $$\langle [-\Delta, \chi_1]u,v\rangle =\langle (1-\chi_1)u,\Delta v \rangle_{|x|<r_1} +\langle (1-\chi_1)Pu ,v \rangle_{|x|<r_1}
-\bp(u, v).$$
\end{lem}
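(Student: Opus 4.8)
The plan is to reduce the identity to Green's formula on the annular region, exactly as in Lemma~\ref{l:bpid}, being careful about the cutoff $\chi_1$ and the behavior inside $\mathcal{B}$. First I would write $[-\Delta,\chi_1]u = -\Delta(\chi_1 u) + \chi_1\Delta u$ on $\mathbb{R}^2\setminus\overline{\mathcal B}$, where $\Delta u$ is to be interpreted via $Pu$ as in the black-box conventions (recall $(Pu)|_{\mathbb{R}^2\setminus\mathcal B} = (-\Delta u)|_{\mathbb{R}^2\setminus\mathcal B}$), and note that $[-\Delta,\chi_1]u$ is supported in the annulus $\{x : |x|-r_1 \text{ bounded}\}$ where $u \in H^2_{\loc}$ and $\chi_1$ is already $0$ outside $|x|>r_1-1$, so all the objects are smooth enough that Green's formula applies on the disk $\{|x|<r_1\}$.

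The key computation is then: pair $[-\Delta,\chi_1]u$ with $v$ in $L^2(\{|x|<r_1\})$, expand the commutator, and integrate by parts. Write
\[
\langle [-\Delta,\chi_1]u,v\rangle = \langle -\Delta(\chi_1 u), v\rangle_{|x|<r_1} + \langle \chi_1 \Delta u, v\rangle_{|x|<r_1}.
\]
In the first term I would move the two derivatives off $\chi_1 u$ onto $v$; since $\chi_1 u$ vanishes near $|x|=r_1$ there are no boundary terms from it there, and since $\chi_1$ is constant near $\mathcal B$ the black-box pairing makes sense; this produces $\langle \chi_1 u, \Delta v\rangle_{|x|<r_1}$. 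Now rewrite $\chi_1 u = u - (1-\chi_1)u$ and $\chi_1\Delta u = \Delta u - (1-\chi_1)\Delta u$ (with $\Delta u$ on the support of $1-\chi_1$ being the genuine $-Pu$ from the operator), and recombine: the $u$ and $\Delta u$ pieces assemble into $\langle u,\Delta v\rangle - \langle \Delta u, v\rangle$ over $\{|x|<r_1\}$, which by Green's formula (Lemma~\ref{l:bpid} with $P^*$ replaced by $-\Delta$ on the relevant region, or directly) equals $-\bp(u,v)$, the boundary integral at $|x|=r_1$; the remaining pieces are exactly $\langle (1-\chi_1)u,\Delta v\rangle_{|x|<r_1} + \langle (1-\chi_1)Pu, v\rangle_{|x|<r_1}$ after using $\Delta u = -Pu$ on $\supp(1-\chi_1)$ inside the disk and flipping the sign. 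Care with signs in $\bp$ (which is defined as $\int u\partial_r\bar v - (\partial_r u)\bar v$) is the one place to be attentive.

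The main obstacle is bookkeeping rather than conceptual: correctly tracking which copy of "$\Delta u$" is the literal Laplacian (valid away from $\mathcal B$, where $1-\chi_1$ is supported — and there $Pu = -\Delta u$) versus the formal $Pu \in \mathcal H_{\loc}$, and making sure the integrations by parts on the disk $\{|x|<r_1\}$ produce boundary terms only at $|x|=r_1$ and not at $\partial\mathcal B$ — this is guaranteed because $\chi_1$ is constant (indeed $1$) near $\mathcal B$, so $[-\Delta,\chi_1]u$ and all the $(1-\chi_1)$-weighted objects are supported in the region where everything is a genuine $H^2_{\loc}$ function on $\mathbb{R}^2$. Once the cutoff regions are sorted out, this is just Green's formula, so I would keep the written proof to a sentence or two, as the authors have done with Lemma~\ref{l:bpid}.
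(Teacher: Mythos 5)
Your overall plan (Green's formula plus cutoff bookkeeping) is the right one, and your calculation would go through in the concrete examples (Schr\"odinger, obstacle) where $\mathcal H_0$ is a function space. But there is a genuine gap for a general black box $P$, and it lies precisely in the decomposition you chose. You start from
\[
\langle [-\Delta,\chi_1]u,v\rangle = \langle -\Delta(\chi_1 u), v\rangle_{|x|<r_1} + \langle \chi_1 \Delta u, v\rangle_{|x|<r_1},
\]
but neither term on the right makes sense in the abstract setting: $\chi_1 u$ and $\chi_1 \Delta u$ are nonzero on $\mathcal B$ (where $\chi_1=1$), and there $u$ lives in the abstract space $\mathcal H_0$, while $v\in H^2_{\loc}(\Real^2)$ is an ordinary function; there is no pairing between $\mathcal H_0$ and $L^2(\mathcal B)$ in this framework. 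The same problem recurs when you later write $\chi_1 u = u - (1-\chi_1)u$ and $\chi_1\Delta u = \Delta u - (1-\chi_1)\Delta u$ and assemble the ``$u$ and $\Delta u$'' pieces into $\langle u,\Delta v\rangle - \langle \Delta u, v\rangle$: these pairings are undefined for the same reason, and invoking Lemma~\ref{l:bpid} with ``$P^*$ replaced by $-\Delta$'' is exactly what the hypotheses of that lemma are there to forbid, since a generic $v\in H^2_{\loc}(\Real^2)$ need not have $\chi v$ in $\mathcal D(P^*)$. Your closing paragraph correctly identifies that ``all the $(1-\chi_1)$-weighted objects'' are safe, but the decomposition you actually use is not $(1-\chi_1)$-weighted.

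The paper avoids this by choosing the other commutator expansion: on $\{|x|<r_1\}$ one has $[-\Delta,\chi_1]u = \big(\Delta(1-\chi_1) + (1-\chi_1)P\big)u$, so \emph{both} pieces carry a factor $(1-\chi_1)$ and are therefore genuine $H^2$ functions vanishing near $\mathcal B$. Green's formula then applies to $\langle \Delta\big((1-\chi_1)u\big), v\rangle_{|x|<r_1}$ directly (all quantities are honest functions on $\{|x|<r_1\}\setminus\mathcal B$), producing $\langle (1-\chi_1)u,\Delta v\rangle_{|x|<r_1}$ plus a boundary term at $|x|=r_1$; since $\chi_1$ vanishes there, that boundary term is $-\bp(u,v)$. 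Your argument can be repaired by replacing $-\Delta(\chi_1 u) + \chi_1 \Delta u$ with $\Delta((1-\chi_1)u) + (1-\chi_1)Pu$ at the first step; then the rest of your computation becomes the paper's proof.
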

\begin{proof}
We  write
\begin{align*} \langle [&-\Delta, \chi_1]u,v \rangle  = \langle (\Delta(1-\chi_1)+(1-\chi_1)P)u,v\rangle_{|x|<r_1}\\&
= \langle (1-\chi_1)u,\Delta v \rangle_{|x|<r_1} +\langle (1-\chi_1)Pu, v \rangle_{|x|<r_1}
+ \int_{|x|=r_1}(\partial_r u \overline{v}- u \partial_r\overline{v}),
\end{align*}
by using Green's formula.
\end{proof}

\begin{lem}\label{l:bpl}
Fix $\phi\in \mathcal{H}_c $ and $y_0\in \Real^2$ such that $|y_0|<r_1$. Then
$$\langle \phi, \overline{R_0(\lambda)}( \bullet, y_0)\rangle _{|x|>r_1}=- \bp_{r_1}( R(\lambda)\phi, \overline{R_0(\lambda)}(\bullet, y_0)).$$
\end{lem}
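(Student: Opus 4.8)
The plan is to apply Lemma~\ref{l:bpid} with $u_1 = R(\lambda)\phi$ and $u_2 = \overline{R_0(\lambda)}(\bullet, y_0)$, after checking the hypotheses, and then to simplify the resulting $|x|<r_1$ inner products using the resolvent equations satisfied by $u_1$ and $u_2$. First I would verify the hypotheses of Lemma~\ref{l:bpid}: since $\phi \in \mathcal{H}_c$ we have $R(\lambda)\phi \in \mathcal{D}_{\loc}$; since $|y_0|<r_1$, the function $x \mapsto \overline{R_0(\lambda)}(x,y_0)$ is smooth away from $y_0$ and lies in $H^2_{\loc}$ near $|x|=r_1$, and because the involution/conjugation satisfies the properties in (A2) (or trivially in the self-adjoint case), $\overline{R_0(\lambda)}(\bullet,y_0)\chi$ is in the domain of $P^*$ for the relevant cutoffs — here one uses that $P = -\Delta$ outside $\mathcal{B}$ together with Lemma~\ref{l:resA2}. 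So Lemma~\ref{l:bpid} gives
\[
\bp_{r_1}\big(R(\lambda)\phi, \overline{R_0(\lambda)}(\bullet,y_0)\big) = \big\langle P R(\lambda)\phi, \overline{R_0(\lambda)}(\bullet,y_0)\big\rangle_{|x|<r_1} - \big\langle R(\lambda)\phi, P^* \overline{R_0(\lambda)}(\bullet,y_0)\big\rangle_{|x|<r_1}.
\]

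Next I would compute the two terms on the right. On $\{|x|<r_1\}$ we have $P R(\lambda)\phi = \phi + \lambda^2 R(\lambda)\phi$. For the second term, $P^*$ acts on $\overline{R_0(\lambda)}(\bullet,y_0)$: applying the conjugation and using $\overline{P^* \bar u} = Pu$ (or self-adjointness), and the fact that $P = -\Delta$ where $|x|<r_1$ away from $\mathcal{B}$ combined with $(-\Delta-\lambda^2)R_0(\lambda)(\bullet,y_0) = \delta_{y_0}$, one gets $P^* \overline{R_0(\lambda)}(\bullet,y_0) = \overline{\delta_{y_0}} + \bar\lambda^2 \overline{R_0(\lambda)}(\bullet,y_0)$ in $\{|x|<r_1\}$ (with the $\mathcal{B}$-part handled by the black-box hypotheses since $y_0 \notin \mathcal{B}$ may be assumed, or more carefully by noting $R_0(\lambda)(\bullet,y_0)$ solves the free equation). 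Pairing in $\langle\cdot,\cdot\rangle_{|x|<r_1}$, the $\lambda^2 R(\lambda)\phi$ term and the $\bar\lambda^2$ term cancel — because $\langle \lambda^2 R(\lambda)\phi, \overline{R_0(\lambda)}(\bullet,y_0)\rangle_{|x|<r_1} = \overline{\langle \bar\lambda^2 \overline{R_0(\lambda)}(\bullet,y_0)\rangle}\cdot(\ldots)$, the conjugation on the second slot turning $\bar\lambda^2$ back into $\lambda^2$ — leaving $\langle \phi, \overline{R_0(\lambda)}(\bullet,y_0)\rangle_{|x|<r_1} - \langle R(\lambda)\phi, \overline{\delta_{y_0}}\rangle_{|x|<r_1}$. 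The last pairing evaluates $R(\lambda)\phi$ against a delta at $y_0$ via the conjugation identity, giving $\overline{R(\lambda)\phi(y_0)} = (R_0(\lambda)\,?\,)$ — actually the cleaner route is to recognize that it equals the full-space pairing $\langle \phi, \overline{R_0(\lambda)}(\bullet,y_0)\rangle_{\mathbb{R}^2}$ collapsed, i.e. using $\langle \phi, \overline{R_0(\lambda)}(\bullet,y_0)\rangle_{\mathbb{R}^2} = \overline{R_0(\lambda)\bar\phi(y_0)}$ is not quite what appears, so I will instead add and subtract the $|x|>r_1$ piece.

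The efficient bookkeeping is: write $\langle \phi, \overline{R_0(\lambda)}(\bullet,y_0)\rangle_{|x|<r_1} = \langle \phi, \overline{R_0(\lambda)}(\bullet,y_0)\rangle_{\mathbb{R}^2} - \langle \phi, \overline{R_0(\lambda)}(\bullet,y_0)\rangle_{|x|>r_1}$, and similarly the delta-pairing term $\langle R(\lambda)\phi, \overline{\delta_{y_0}}\rangle$ should combine with $\langle \phi, \overline{R_0(\lambda)}(\bullet,y_0)\rangle_{\mathbb{R}^2}$ to cancel, since $\langle \phi, \overline{R_0(\lambda)}(\bullet,y_0)\rangle_{\mathbb{R}^2}$ is (by Lemma~\ref{l:resA2} applied to the free operator) $\overline{(R_0(\lambda)\bar\phi)(y_0)}$, while evaluating $R(\lambda)\phi$ against the conjugated delta gives $\overline{R(\lambda)\phi(y_0)}$ — these are \emph{not} equal in general, so the surviving terms must be exactly $-\bp_{r_1}(R(\lambda)\phi,\overline{R_0(\lambda)}(\bullet,y_0)) + \langle\phi,\overline{R_0(\lambda)}(\bullet,y_0)\rangle_{|x|>r_1} = 0$ once one also invokes that $R(\lambda)\phi$ and $\overline{R_0(\lambda)}(\bullet,y_0)$ both solve the free Helmholtz equation in $\{|x|>r_1\}$, so their boundary pairing there telescopes to $-\infty$ contributions that vanish by the outgoing/exponential decay \eqref{e:r0exp}. \textbf{The main obstacle} I anticipate is precisely this last point: carefully justifying that the contribution from $\{|x|>r_1\}$ is accounted for, i.e. handling the delta function and the infinite end correctly — one must integrate by parts in an annulus $r_1<|x|<r_2$, use that $(-\Delta-\lambda^2)$ annihilates both functions there, and then let $r_2\to\infty$ using that $R_0(\lambda)(\bullet,y_0)$ decays exponentially while $R(\lambda)\phi = R_0(\lambda)(\text{something compactly supported})$ outside the interaction region also decays, so the boundary term at $|x|=r_2$ vanishes and the boundary term at $|x|=r_1$ reproduces exactly $-\bp_{r_1}$. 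Keeping the conjugation bars straight through every pairing, so that $\lambda$ versus $\bar\lambda$ cancellations are valid, is the other place where care is needed; Lemma~\ref{l:resA2} and the (A2) axioms $\langle \bar u,\bar v\rangle_\mch = \langle v,u\rangle_\mch$ are the tools for this.
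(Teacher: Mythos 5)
Your proposal runs into trouble at the first step. You want to apply Lemma~\ref{l:bpid} with $u_2 = \overline{R_0(\lambda)}(\bullet,y_0)$, but this function is singular at $y_0$, which lies inside $\{|x|<r_1\}$. Near $y_0$ we have $(-\Delta-\bar\lambda^2)\overline{R_0(\lambda)}(\bullet,y_0) = \delta_{y_0}$ in the sense of distributions, which is not in $L^2$, so $\chi\,\overline{R_0(\lambda)}(\bullet,y_0)$ is \emph{not} in the domain of $P^*$ and Lemma~\ref{l:bpid} does not apply. You sense this (the ``delta-pairing term'' and the ``these are not equal in general'' admission) and try to patch it by adding and subtracting pieces, but the bookkeeping becomes circular rather than resolving the difficulty. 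The $\lambda$ versus $\bar\lambda$ cancellation you invoke also needs justification; it is not automatic for general $\lambda\in\Lambda$.

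The paper's proof avoids all of this by working \emph{only} in the exterior annulus $r_1<|x|<\rho$, where $\overline{R_0(\lambda)}(\bullet,y_0)$ is smooth and solves the free Helmholtz equation, and $P=-\Delta$. It first reduces to $\lambda = i|\lambda|$ by analytic continuation, so that $\lambda^2 = \bar\lambda^2$ is real (eliminating your conjugation headaches) and so that both $R(\lambda)\phi$ and $R_0(\lambda)(\bullet,y_0)$ decay exponentially at infinity by \eqref{e:rexp} and \eqref{e:r0exp}. Writing $\phi = (P-\lambda^2)R(\lambda)\phi$ and applying Green's formula in the annulus gives $\langle\phi,\psi_2\rangle_{\rho>|x|>r_1} = \bp_\rho(\psi_1,\psi_2) - \bp_{r_1}(\psi_1,\psi_2)$; the $\bp_\rho$ term vanishes as $\rho\to\infty$ by the exponential decay, and since $\phi$ has compact support the left side converges to $\langle\phi,\psi_2\rangle_{|x|>r_1}$. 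This is precisely the argument you describe as ``the main obstacle'' at the end of your proposal — it is not an obstacle to your proof, it \emph{is} the proof, and your interior-ball starting point should be abandoned. One small correction to your phrasing there: it is not true that $(-\Delta-\lambda^2)$ annihilates $R(\lambda)\phi$ in the annulus; rather $(P-\lambda^2)R(\lambda)\phi = \phi$, and it is precisely the potential support of $\phi$ in $|x|>r_1$ that produces the nonzero left-hand side.
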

\begin{proof} By analytic continuation, it is enough to prove the equality for $\lambda =i|\lambda|$, $|\lambda|>0$, and $\lambda$ away from any poles of $R(\lambda)$.  
Recall that, by \eqref{e:r0exp}, %and \eqref{e:rexp},
%$(R(\lambda)\phi)(x)$ and 
$R_0(\lambda)(x, y_0)$ is exponentially decaying in $|x|$.  
Then, with $\psi_1 = R(\lambda) \phi$ and $\psi_2 = \overline{R_0(\lambda)}( \bullet, y_0)$, we have
\[
\langle \phi,  \psi_2\rangle _{|x|>r_1} = \lim_{\rho \rightarrow \infty}  \langle (P-\lambda^2)\psi_1, \psi_2\rangle _{\rho > |x|>r_1}
 =   - \bp_{r_1}( \psi_1, \psi_2),
\]
where we used  Green's formula in the annulus $\rho > |x|>r_1$ as in Lemma \ref{l:bpid}, the compact support of $\phi$, and $(-\Delta -\lambda^2)R_0(\lambda)(x, y_0)=0$ for $|x|>r_1$.
\end{proof}

%Let $u\in L^2_{\loc}$ be a function which is harmonic for $\{|x|>r_0\}$ and grows at most polynomially at infinity. Then, as in \eqref{e:harmexp}, there are an integer $L$ and constants $\fc_{l,\pm}(u)$, $\fc_0(u)$, $\fc_{\log}(u)$ such that
%\begin{equation}\label{eq:fc}
%u(r\cos \theta, r \sin \theta)=\sum_{l=-\infty}^{L}\left( \fc_{l,+}(u)e^{i l \theta}+ \fc_{l,-}(u)e^{-i l \theta}\right) r^{l}+ \fc_0(u)+\fc_{\log}(u), \qquad \text{for } r=|x|>r_0.
%\end{equation}

We shall frequently take boundary pairings of functions which are harmonic near infinity, and so we record the following lemma, whose proof is a straightforward calculation which we omit.

\begin{lem}\label{l:bdrypair}
Let $\phi,\; \psi$ be harmonic and polynomially bounded for $x$ such that  $|x| > r_1-1$.  Then, in terms of the Fourier series expansion, \eqref{e:fcexp},
we have
\[
\frac 1 {2\pi} \bp(\phi,\psi)= \fc_0(\phi)\overline{\fc}_{\log}(\psi)-\fc_{\log}(\phi)\overline{\fc}_0(\psi)+\sum_{l\in \Integers\setminus\{0\}} l  v_{-l} (\phi) \cdot v_{l}(\psi).
\] 
\end{lem}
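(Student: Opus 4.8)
## Proof Proposal for Lemma \ref{l:bdrypair}

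The plan is to compute $\bp(\phi,\psi) = \int_{|x|=r_1} \phi\,\partial_r\overline\psi - (\partial_r\phi)\overline\psi$ by substituting the Fourier expansions \eqref{e:fcexp} for $\phi$ and $\psi$ and using orthogonality of the trigonometric system on the circle $|x|=r_1$. First I would write, for $x = r(\cos\theta,\sin\theta)$ with $r = r_1$,
\[
\phi = \fc_0(\phi) + \fc_{\log}(\phi)\log r + \sum_{l\neq 0} v_l(\phi)\cdot(\cos(|l|\theta),\sin(|l|\theta))\,r^l,
\]
and similarly for $\psi$, together with the radial derivatives
\[
\partial_r\phi = \fc_{\log}(\phi)\,r^{-1} + \sum_{l\neq 0} l\,v_l(\phi)\cdot(\cos(|l|\theta),\sin(|l|\theta))\,r^{l-1}.
\]
The key structural point is that the $\theta$-integral $\int_0^{2\pi}(\cdot)\,r_1\,d\theta$ kills all cross terms between distinct Fourier modes and between a nonzero mode and the constant/logarithmic part, by orthogonality of $\{\cos(|l|\theta),\sin(|l|\theta)\}$; and within the constant/log sector the integrand $\phi\,\partial_r\overline\psi - (\partial_r\phi)\overline\psi$ evaluated on the pieces $\{\fc_0, \fc_{\log}\log r\}$ produces, after the $r$-factors cancel, exactly $\fc_0(\phi)\overline{\fc}_{\log}(\psi) - \fc_{\log}(\phi)\overline{\fc}_0(\psi)$ times $2\pi$ (the $\fc_0\overline{\fc_0}$ and $\fc_{\log}\overline{\fc_{\log}}$ contributions cancel because $\partial_r$ of a constant vanishes and the $\log r \cdot r^{-1}$ terms appear symmetrically with opposite sign).

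For the nonzero modes, I would fix $l \neq 0$ and compute the contribution of the pair of terms $v_l(\phi)\cdot e_{|l|}(\theta)\,r^l$ in $\phi$ against $v_{-l}(\psi)$ and $v_l(\psi)$ in $\psi$ — but note that $\int_0^{2\pi} e_{|l|}(\theta)\otimes e_{|l'|}(\theta)\,d\theta$ is nonzero only when $|l| = |l'|$, so for a fixed angular frequency $|l|$ one must pair the $\pm l$ radial behaviors. Writing $\phi$'s mode-$|l|$ part as $(v_l(\phi) r^l + v_{-l}(\phi) r^{-l})\cdot e_{|l|}(\theta)$ and similarly for $\psi$, the boundary integrand contributes $\big[(v_l(\phi) r^l + v_{-l}(\phi) r^{-l})\big]\cdot\big[\overline{(-l) v_l(\psi) r^{l-1} + l\, v_{-l}(\psi) r^{-l-1}}\big]$ minus the analogous term with $\phi,\psi$ roles of differentiation swapped; multiplying by $r$, the $r$-powers combine so that only the $r^{l}\cdot r^{-l} = r^0$ cross terms survive with the right sign, yielding after the $\theta$-integration $2\pi$ times $\big(l\, v_{-l}(\phi)\cdot v_l(\psi) - (-l)\, v_l(\phi)\cdot v_{-l}(\psi)\big)$; summing the two orderings and the two signs, and then summing over $l \in \Integers\setminus\{0\}$ with the understanding that each unordered frequency is counted once from $l>0$ and once from $l<0$, reproduces $\sum_{l\neq 0} l\, v_{-l}(\phi)\cdot v_l(\psi)$. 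Dividing by $2\pi$ gives the claimed formula.

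The main obstacle — and the reason the paper says "whose proof is a straightforward calculation which we omit" — is purely bookkeeping: keeping careful track of the sign conventions in $\bp$, the convention $a\cdot b = a^1\overline{b^1} + a^2\overline{b^2}$ for the $\Complex^2$ inner product (so that complex conjugation lands only on the $\psi$-coefficients), and the double-counting inherent in summing over all $l \in \Integers\setminus\{0\}$ when each angular frequency $|l|$ carries both an $r^{|l|}$ and an $r^{-|l|}$ channel. Once one verifies that the harmonic, polynomially-bounded hypothesis guarantees the expansion \eqref{e:fcexp} is valid on the annulus $|x| > r_1 - 1$ (so term-by-term integration of the finite-plus-convergent series is legitimate on $|x| = r_1$), there is no analytic subtlety, only the need to organize the algebra so the cancellations are manifest.
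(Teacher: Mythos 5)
The paper omits this proof entirely (``a straightforward calculation which we omit''), so there is no authorial argument to compare against; your strategy --- substitute the expansion \eqref{e:fcexp}, exploit $\theta$-orthogonality on the circle $|x|=r_1$, and pair the $r^{\pm|l|}$ radial channels for each angular frequency --- is the only sensible one and is the right approach. The constant/$\log$ sector is handled correctly.

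However, the writeup of the nonzero-mode calculation contains sign errors that, as stated, do not reproduce the lemma. You write the radial derivative of the frequency-$|l|$ block of $\psi$ as $(-l)\,v_l(\psi)\,r^{l-1}+l\,v_{-l}(\psi)\,r^{-l-1}$, but $\partial_r(r^{\pm l}) = \pm l\, r^{\pm l - 1}$, so it should be $l\,v_l(\psi)\,r^{l-1}-l\,v_{-l}(\psi)\,r^{-l-1}$; both signs are flipped. Carrying the correct derivative through, the surviving $r^0$ terms in $\phi\,\partial_r\overline\psi - (\partial_r\phi)\overline\psi$, after multiplying by $r_1$ and integrating in $\theta$ (which gives a factor $\pi$, not $2\pi$, for each of $\cos^2$ and $\sin^2$), yield for each $n=|l|\ge 1$ the contribution
\[
2\pi\, n\bigl(v_{-n}(\phi)\cdot v_n(\psi) - v_n(\phi)\cdot v_{-n}(\psi)\bigr),
\]
with a \emph{minus} between the two dot products; your stated formula $2\pi\bigl(l\,v_{-l}(\phi)\cdot v_l(\psi) - (-l)\,v_l(\phi)\cdot v_{-l}(\psi)\bigr) = 2\pi l\bigl(v_{-l}(\phi)\cdot v_l(\psi) + v_l(\phi)\cdot v_{-l}(\psi)\bigr)$ has a plus. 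The minus is what allows the sum over $n\ge 1$ to be rewritten as $\sum_{l\in\Integers\setminus\{0\}} l\, v_{-l}(\phi)\cdot v_l(\psi)$ after reindexing $n\mapsto -l$ in the second summand. So the conclusion is right, but the displayed algebra needs the signs corrected to actually produce it; as written the intermediate step neither matches the correct computation nor follows from your own (also incorrect) derivative formula.
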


We will often use the following special case:
\begin{lem}\label{l:Ucommint}
If $\phi\in \mcg_{\log}$, then $ c_{\log}(\phi) = \frac {-1} {2\pi}\langle [\Delta,\chi_1] \phi, 1 \rangle = R_{01}[\Delta,\chi_1] \phi$.  
\end{lem}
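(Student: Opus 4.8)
\textbf{Proof strategy for Lemma \ref{l:Ucommint}.}

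The plan is to apply Lemma \ref{l:bdrypair} with $\phi \in \mcg_{\log}$ and $\psi = 1$, and then to reconcile the resulting expression with the commutator pairing $\langle [\Delta,\chi_1]\phi, 1\rangle$ via Lemma \ref{l:sGreens}. First I would compute $\frac{1}{2\pi}\bp(\phi, 1)$ using Lemma \ref{l:bdrypair}: since the constant function $1$ has Fourier data $\fc_0(1) = 1$, $\fc_{\log}(1) = 0$, and $v_l(1) = 0$ for all $l \ne 0$, the sum collapses and only the term $-\fc_{\log}(\phi)\overline{\fc}_0(1) = -\fc_{\log}(\phi)$ survives, giving $\bp(\phi,1) = -2\pi \fc_{\log}(\phi)$, i.e. $\fc_{\log}(\phi) = \frac{-1}{2\pi}\bp(\phi,1)$.

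Next I would relate $\bp(\phi,1)$ to $\langle[\Delta,\chi_1]\phi,1\rangle$ using Lemma \ref{l:sGreens} with $u = \phi$ and $v = 1$. Since $\Delta 1 = 0$ and $P\phi = 0$, the first two terms on the right-hand side of the identity in Lemma \ref{l:sGreens} vanish, leaving $\langle[-\Delta,\chi_1]\phi, 1\rangle = -\bp(\phi,1)$, hence $\langle[\Delta,\chi_1]\phi,1\rangle = \bp(\phi,1)$. Combining with the previous step gives $\fc_{\log}(\phi) = \frac{-1}{2\pi}\langle[\Delta,\chi_1]\phi,1\rangle$, which is the first claimed equality. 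One small point to check here is that the inner product $\langle[\Delta,\chi_1]\phi, 1\rangle$ is well defined: $[\Delta,\chi_1]\phi$ is supported in the annulus where $\chi_1$ is non-constant (a compact set away from $\mathcal B$), so the pairing against the constant $1 \in \mch_{\loc}$ makes sense, and one should note that on the support of $[\Delta,\chi_1]$ we have $P = -\Delta$ so $P\phi = 0$ genuinely means $\Delta\phi = 0$ there, which is needed for the harmonicity hypothesis of Lemma \ref{l:bdrypair} and for Lemma \ref{l:sGreens} to apply cleanly.

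Finally, for the second equality $\langle[\Delta,\chi_1]\phi,1\rangle = R_{01}[\Delta,\chi_1]\phi$, I would invoke the explicit form of the free resolvent coefficient: from \eqref{eq:R2jk}, $R_{01}$ has integral kernel $R_{01}(x,y) = -\frac{1}{2\pi}$, so $R_{01} g = -\frac{1}{2\pi}\int g(y)\,dy = -\frac{1}{2\pi}\langle g, 1\rangle$ for any compactly supported $g$. Applying this to $g = [\Delta,\chi_1]\phi$ gives $R_{01}[\Delta,\chi_1]\phi = -\frac{1}{2\pi}\langle[\Delta,\chi_1]\phi, 1\rangle$, which matches $\fc_{\log}(\phi)$ by the computation above. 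I do not anticipate a serious obstacle here; the only mildly delicate point is keeping track of signs and of the conjugation convention in the pairing $\langle\cdot,\cdot\rangle$, and ensuring the function $\phi$ is genuinely harmonic (not just $P\phi=0$) on the relevant annulus so that both Lemma \ref{l:bdrypair} and the integral-kernel computation apply.
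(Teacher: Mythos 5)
Your proposal is correct and takes essentially the same approach as the paper, whose proof is exactly the one-line instruction to combine Lemma \ref{l:sGreens} with $v=1$, Lemma \ref{l:bdrypair}, and the kernel formula for $R_{01}$ from \eqref{eq:R2jk}. Your sign bookkeeping and the observation that $[\Delta,\chi_1]\phi$ is supported where $P=-\Delta$ are both accurate.
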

\begin{proof} Use Lemma \ref{l:sGreens} with  $v=1$, Lemma \ref{l:bdrypair}, and the resolvent kernel formula \eqref{eq:R2jk}.
\end{proof}

We supplement the $\mathcal G_l$ spaces   with the following related spaces. For $l\in \Integers\setminus\{0\}$, let 
$$\mathcal{F}_l=\{ u \in C^\infty(\mathbb R^2\setminus \{0\}) \colon u(r\cos \theta, r \sin \theta)=(c_+e^{il\theta}+c_-e^{-il\theta})r^l,\; \text{  $c_{\pm}\in \Complex$}\},$$
and  $$\mathcal F_0= \{ u\in C^\infty(\mathbb R^2\setminus \{0\})\colon u(r\cos \theta, r \sin \theta)=c_0+c_1\log r, \ c_0 \text{ and } c_1\in \Complex \}.$$
%Choose $r_1>0$ so that $\chi_1(x)=0$ for $|x|>r_1$ and define, for $u, v\in H^2(\{ x\in \Real^2: \left| |x|-r_1\right|<1\})$ 
%\begin{equation}\label{eq:bdrypair}
%\bp(u,v)=\int_{|x|=r_1} u\partial_r\overline{v}-(\partial_r u)\overline{v}.
%\end{equation}
Note from Lemma \ref{l:bdrypair} that $\bp$ is a nondegenerate pairing from $ \mathcal F_l \times \mathcal F_{-l}$ to $\mathbb C$, and that $\bp(u,v)=0$ if $u \in \mathcal F_l$, $v \in \mathcal F_k$, with $l\not = -k$.
Let 
\begin{equation}\label{eq:F'}
\mathcal F_l'= \{ u\in \mathcal F_l \colon \text{there exists } u'\in \mathcal{D}_{\loc} \text{ with } Pu'=0\;\text{and } u \sim u' \text{ as } r \to \infty\}.
\end{equation}

\begin{lem}\label{l:F'pairing}
Suppose $u\in \mathcal F_l'$ and $v\in \mathcal F_{-l}'$.  If $P=P^*$,  then $\bp(u,v)=0$.  If instead $P$ and $\mathcal{H}$ satisfy the hypotheses (A2), then
$\bp(u,\bar v)=0$.
\end{lem}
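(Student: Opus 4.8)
The plan is to prove both assertions by taking elements $u' \in \mathcal D_{\loc}$ and $v' \in \mathcal D_{\loc}$ witnessing membership of $u \in \mathcal F_l'$ and $v \in \mathcal F_{-l}'$, so $Pu' = 0$, $Pv' = 0$, $u \sim u'$, and $v \sim v'$ as $r \to \infty$. The key observation is that the boundary pairing $\bp_{r_1}$ only sees the behavior of functions near the circle $|x| = r_1$, and by Lemma \ref{l:bdrypair} it only depends on the Fourier coefficients $\fc_0, \fc_{\log}, v_l$ of harmonic functions near infinity. Since $u' - u$ and $v' - v$ involve only faster-decaying harmonic modes (elements of $\mathcal F_k$ with $k \ne l$, resp. $k \ne -l$, plus $\log$ and constant corrections that get annihilated by the orthogonality in Lemma \ref{l:bdrypair}), one can argue that $\bp_{r_1}(u,v) = \bp_{r_1}(u', v')$ once $r_1$ is taken large enough, or more precisely compute both sides in terms of the shared Fourier data. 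So it suffices to show $\bp_{r_1}(u', v') = 0$ (resp. $\bp_{r_1}(u', \bar v') = 0$).

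For the self-adjoint case, I would apply Lemma \ref{l:bpid} with $u_1 = u'$, $u_2 = v'$: the hypothesis that $v' \chi$ lies in the domain of $P^* = P$ holds since $v' \in \mathcal D_{\loc}$, and we get $\bp_{r_1}(u', v') = \langle Pu', v'\rangle_{|x| < r_1} - \langle u', P v'\rangle_{|x| < r_1} = 0 - 0 = 0$, using $Pu' = 0 = Pv'$. The only subtlety is checking that $v' \chi$ is genuinely in the domain of $P^*$: for our examples (Schr\"odinger, obstacles) and black-box operators this follows from the local mapping properties of $\mathcal D$, since away from $\mathcal B$ we just have $-\Delta$ acting on $H^2_{\loc}$ functions, and near $\mathcal B$ we use $v' \in \mathcal D|_{\mathcal B} \oplus H^2_{\loc}$ together with the black-box axioms on cutoffs. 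In the (A2) case, I would instead use Lemma \ref{l:resA2}-style reasoning, or rather work directly: set $w = \bar{v}'$, which by the (A2) hypotheses satisfies $P^* w = \overline{P v'} = 0$ and $w \sim \bar v$ as $r \to \infty$, where $\bar v$ is the mode $(\overline{c_-} e^{il\theta} + \overline{c_+} e^{-il\theta}) r^{-l} \in \mathcal F_l$ if $v$ had coefficients $c_\pm$ — so $\bar v \in \mathcal F_l$ and $\bp_{r_1}(u, \bar v)$ is a pairing of $\mathcal F_l$ with itself composed with conjugation. Then Lemma \ref{l:bpid} gives $\bp_{r_1}(u', \bar v') = \langle P u', \bar v'\rangle_{|x|<r_1} - \langle u', P^* \bar v'\rangle_{|x|<r_1} = 0$.

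The main obstacle I anticipate is the bookkeeping in reducing from $\bp(u,v)$ to $\bp(u',v')$: one must be careful that the tails $u - u'$ and $v - v'$, while individually of lower order, do not contribute to the bilinear pairing against the leading mode. The clean way is to note that $u, u'$ both lie in (a $r_1$-dependent enlargement of) a finite-dimensional space of harmonic functions near $|x| = r_1$ and apply Lemma \ref{l:bdrypair} termwise: the pairing $\bp(u,v)/2\pi$ equals $\sum_{k} k\, v_{-k}(u) \cdot v_k(v)$ plus the $\fc_0 \overline{\fc}_{\log}$ terms, and since $u \in \mathcal F_l$ has only the mode $k = l$ (nonzero only if $l \ne 0$; the $l=0$ case with $\log$ terms needs its own small check using that $\fc_{\log}$ or $\fc_0$ of $u'$ matches that of $u$), the only surviving term pairs $v_l(u)$ with $v_{-l}(v)$, and these coincide for $(u,v)$ and $(u',v')$ since $u \sim u'$, $v \sim v'$. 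Equivalently, and perhaps more cleanly, I would choose $r_1$ large and directly invoke Lemma \ref{l:bpid} on $u', v'$ without ever mentioning $u, v$ in the interior, using only the asymptotic equivalence at the very end to identify the boundary integrand over $|x| = r_1$ up to terms that integrate to zero by the orthogonality of distinct Fourier modes. With this in hand the self-adjoint and (A2) cases are immediate from Lemmas \ref{l:bpid} and the (A2) axioms on $\bar{\,\cdot\,}$ and $P^*$.
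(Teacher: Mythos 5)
Your proposal is correct and matches the paper's argument: the paper's proof simply notes that $\bp(u,v)=\bp(u',v')$ (and $\bp(u,\bar v)=\bp(u',\overline{v'})$ in case (A2)) because the boundary pairing depends only on the matching leading Fourier data, and then applies Lemma \ref{l:bpid} to conclude, which is exactly what you do. One small slip that does not affect the argument: if $v\in\mathcal F_{-l}$ then $\bar v$ is again in $\mathcal F_{-l}$ (not $\mathcal F_l$ as you wrote), so $\bp(u,\bar v)$ remains the ordinary sesquilinear $\mathcal F_l\times\mathcal F_{-l}$ pairing.
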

\begin{proof}
First we note that if $u'$ is as in (\ref{eq:F'}) and $v'$ is the analog for $v$, then
$\bp(u,v)=\bp(u',v')$ and $\bp(u,v)=\bp (u',\overline{v'}). $  Then the lemma follows from an application of Lemma \ref{l:bpid}.
\end{proof}
%We remark that this lemma ensures that $\bp(u,v)=0$ whenever $u\in F'(P,j)$ and $v\in F'(P,k)$.
From Lemma \ref{l:F'pairing} and the nondegeneracy of the mapping $\bp: \mathcal F_l \times \mathcal F_{-l}\rightarrow \Complex$, we deduce
\begin{cor}\label{c:ubF's}
If $l\in \Natural_0$, then $\dim \mathcal F_l' + \dim \mathcal F_{-l}' \leq 2$.  %Moreover, $\dim F'(P,0)\leq 1$.
\end{cor}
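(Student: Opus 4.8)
The plan is to reduce the statement to a standard linear-algebra fact about subspaces that are isotropic for a nondegenerate pairing. Essentially all the needed ingredients have been assembled above: each space $\mathcal F_l$ (and $\mathcal F_0$) is two-dimensional; the boundary pairing $\bp\colon \mathcal F_l\times \mathcal F_{-l}\to\Complex$ is nondegenerate (noted right after Lemma \ref{l:bdrypair}); and Lemma \ref{l:F'pairing} says that $\mathcal F_l'$ and $\mathcal F_{-l}'$ pair to zero under $\bp$ in the self-adjoint case, while $\mathcal F_l'$ and $\overline{\mathcal F_{-l}'}$ pair to zero under hypotheses (A2).

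First I would observe that $\mathcal F_l'$ is a linear subspace of $\mathcal F_l$: a linear combination of solutions $u'\in\mathcal{D}_{\loc}$ of $Pu'=0$ is again such a solution, and the matching condition $u\sim u'$ as $r\to\infty$ is preserved under linear combinations. Next I would record the linear-algebra input. If $B\colon V\times W\to\Complex$ is a nondegenerate sesquilinear pairing between two $n$-dimensional spaces, then $v\mapsto B(v,\cdot)$ identifies $V$ with the space of (anti-)linear functionals on $W$, so for any subspace $V_0\subseteq V$ its annihilator $V_0^{\perp}=\{w\in W\colon B(v,w)=0 \text{ for all } v\in V_0\}$ has dimension $n-\dim V_0$. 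Applying this with $V=\mathcal F_l$, $W=\mathcal F_{-l}$, $n=2$, $V_0=\mathcal F_l'$: in the case $P=P^*$, Lemma \ref{l:F'pairing} gives $\mathcal F_{-l}'\subseteq(\mathcal F_l')^{\perp}$, hence $\dim\mathcal F_{-l}'\le 2-\dim\mathcal F_l'$, which is precisely the assertion.

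For the (A2) case one extra remark suffices: conjugation $v\mapsto\bar v$ maps $\mathcal F_{-l}$ bijectively onto itself, since $r^{-l}$ and $\log r$ are real-valued, and it is anti-linear, so $\overline{\mathcal F_{-l}'}$ is a subspace of $\mathcal F_{-l}$ with $\dim\overline{\mathcal F_{-l}'}=\dim\mathcal F_{-l}'$; then Lemma \ref{l:F'pairing} gives $\overline{\mathcal F_{-l}'}\subseteq(\mathcal F_l')^{\perp}$ and the same count applies. The value $l=0$ is not genuinely special: there the two factors coincide and the claim reads $\dim\mathcal F_0'\le 1$, i.e. $\mathcal F_0'$ is isotropic for the nondegenerate pairing on the two-dimensional space $\mathcal F_0$, which the same argument yields. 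I do not expect a real obstacle here; the only point requiring a little care is the bookkeeping in the (A2) case, namely checking that conjugation preserves $\mathcal F_{-l}$ and complex dimensions and that the pairing is used with the correct (anti-)linearity convention.
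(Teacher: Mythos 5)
Your proposal is correct and is exactly the argument the paper intends: the paper deduces the corollary in one line from Lemma \ref{l:F'pairing} together with the nondegeneracy of $\bp$ on $\mathcal F_l\times\mathcal F_{-l}$, and your annihilator dimension count (with the conjugation bookkeeping in case (A2) and the isotropic-subspace reading when $l=0$) just fills in the routine details.
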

Since $ \dim  (\mcg_{-1}/\mcg_{-2}) = \dim \mathcal F_{-1}'$ and $ \dim  (\mcg_{0}/\mcg_{-1}) \le \dim \mathcal F_0'$, Corollary \ref{c:ubF's} generalizes and sharpens \eqref{e:dg0g-1}. In particular, we obtain

\begin{cor}\label{c:g0glog}
 At most one of $\mcg_{\log} \setminus \mcg_0$ and $\mcg_0 \setminus \mcg_{-1}$ is nonempty.
\end{cor}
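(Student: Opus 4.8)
The plan is to derive Corollary~\ref{c:g0glog} directly from Corollary~\ref{c:ubF's} in the case $l = 0$. First I would observe that $\mcg_{\log} \setminus \mcg_0$ nonempty means there is $u \in \mcg_{\log}$ with $c_{\log}(u) \ne 0$; by \eqref{e:fcexp} the leading behavior of such a $u$ at infinity is $c_{\log}(u) \log r$, which (after subtracting the lower-order $c_0$ term, which is a bounded harmonic function but not in $\mathcal F_0$ by itself... actually $c_0$ is a constant so $c_0 + c_{\log}(u)\log r \in \mathcal F_0$) gives an element of $\mathcal F_0'$ with nonzero $\log r$ coefficient. Hence $\dim \mathcal F_0' \ge 1$, and moreover this element is \emph{not} equivalent at infinity to any constant, so it is genuinely a ``new'' direction.

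The key point is the asymmetry between $\mcg_{\log}$ and $\mcg_0$: elements of $\mathcal F_0'$ coming from $\mcg_0 \setminus \mcg_{-1}$ have the form $c_0$ with $c_0 \ne 0$ (a nonzero constant), while elements coming from $\mcg_{\log} \setminus \mcg_0$ have the form $c_0 + c_{\log}\log r$ with $c_{\log} \ne 0$. These are linearly independent in $\mathcal F_0$. So if \emph{both} $\mcg_{\log}\setminus \mcg_0$ and $\mcg_0 \setminus \mcg_{-1}$ were nonempty, we would get $\dim \mathcal F_0' \ge 2$. But Corollary~\ref{c:ubF's} with $l = 0$ reads $2\dim \mathcal F_0' \le 2$, i.e. $\dim \mathcal F_0' \le 1$, a contradiction. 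Therefore at most one of the two sets is nonempty.

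The main obstacle — really the only thing requiring care — is checking that $\mcg_0 \setminus \mcg_{-1} \ne \emptyset$ forces a constant direction in $\mathcal F_0'$ and that $\mcg_{\log} \setminus \mcg_0 \ne \emptyset$ forces a genuinely independent ($\log r$) direction in $\mathcal F_0'$, so that the two together give $\dim \mathcal F_0' \ge 2$. For the first: if $u \in \mcg_0 \setminus \mcg_{-1}$ then $u = O(1)$ but $u \ne O(|x|^{-1})$, so in \eqref{e:fcexp} we have $c_{\log}(u) = 0$ but $c_0(u) \ne 0$ (all the $v_l$ terms with $l < 0$ are $O(|x|^{-1})$, and those with $l > 0$ would contradict $u = O(1)$, hence vanish), giving the constant $c_0(u) \in \mathcal F_0'$. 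For the second: if $u \in \mcg_{\log}\setminus \mcg_0$ then $u = O(\log|x|)$ but $u \ne O(1)$, forcing $c_{\log}(u) \ne 0$, and the $\mathcal F_0$-leading part $c_0(u) + c_{\log}(u)\log r$ lies in $\mathcal F_0'$. Since a function with $c_{\log} \ne 0$ cannot be a scalar multiple of a nonzero constant, these two elements of $\mathcal F_0'$ are linearly independent, completing the argument.
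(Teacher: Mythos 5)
Your proof is correct and takes essentially the same approach as the paper: it applies Corollary~\ref{c:ubF's} with $l=0$ to get $\dim \mathcal F_0' \le 1$, and observes that elements of $\mcg_0\setminus\mcg_{-1}$ and $\mcg_{\log}\setminus\mcg_0$ would contribute linearly independent directions (constant vs.\ genuine $\log r$) to $\mathcal F_0'$. The paper states this deduction very tersely; you have supplied the Fourier-expansion details that justify it.
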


\subsection{Vodev's identity}\label{s:vod}

%Let $\chi_1 \in C_c^\infty(\mathbb R^2)$ be such that $\chi_1$ is $1$ near $\mathcal{B}$ and $\chi_1$ depends only on $|x|$. 
Let $z$ and $\lambda$ be in the upper half plane, away from any square roots of eigenvalues of $P$. To relate the resolvents of $P$ and $-\Delta$, we start by using 
$$ R(\lambda)(1-\chi_1)(-\Delta - \lambda^2)R_0(\lambda) = R(\lambda)\{(P-\lambda^2)(1-\chi_1) + [\chi_1,\Delta]\}R_0(\lambda)$$
 to write
\begin{equation}\label{e:rzr0z}
 R(\lambda)(1-\chi_1) = \{1-\chi_1 - R(\lambda)[\Delta,\chi_1]\}R_0(\lambda).
\end{equation}
 %Hence, for any $f \in \mathcal H_{\text{comp}}$ and $z$ in the upper half plane,
%\begin{equation}\label{e:rexp}
 %R_0(z) f|_{\mathbb R^2 \setminus \mathcal B}(x) = O(e^{-|x|\im z}), \qquad \text{as } |x| \to \infty.
%\end{equation}}
Similarly to \eqref{e:rzr0z} we have
\begin{equation}\label{eq:lco}
 (1-\chi_1)R(z) = R_0(z)\{1-\chi_1 + [\Delta,\chi_1]R(z)\}.
\end{equation}
 We note for later use that this implies that for any $f \in \mathcal H_{c}$ and $\lambda$ in the upper half plane,
\begin{equation}\label{e:rexp}
 R(\lambda) f|_{\mathbb R^2 \setminus \mathcal B}(x) = O(e^{-|x|\im \lambda}), \qquad \text{as } |x| \to \infty.
\end{equation}
Since $R(\lambda) - R(z) = (\lambda^2 - z^2)R(\lambda)R(z)$, we have
\[
 R(\lambda) - R(z)  = (\lambda^2 - z^2)\Big(R(\lambda)\chi_1(2-\chi_1)R(z) + R(\lambda)(1-\chi_1)^2 R(z)\Big),
\]
and inserting  \eqref{e:rzr0z} and (\ref{eq:lco}) gives
\[
\begin{split}
R(\lambda)-R(z)= &(\lambda^2-z^2)\Big(R(\lambda) \chi_1(2-\chi_1)R(z)
\\
&+ \{ (1-\chi_1)-R(\lambda)[\Delta,\chi_1]\} R_0(\lambda)R_0(z)
\{ (1-\chi_1)+[\Delta,\chi_1]R(z)\}\Big).
\end{split}
\]
Plugging in $(\lambda^2 - z^2)R_0(\lambda)R_0(z) = R_0(\lambda)-R_0(z)$,
and putting
\begin{equation}\label{e:k1def}
 K_1 = 1 - \chi_1 +[\Delta,\chi_1] R(z),
\end{equation}
gives
\begin{equation}\label{e:vodevido}
\begin{split}
R(\lambda)-R(z)= &(\lambda^2-z^2)R(\lambda) \chi_1(2-\chi_1)R(z) + \{ 1-\chi_1-R(\lambda)[\Delta,\chi_1]\} (R_0(\lambda)-R_0(z))K_1.
\end{split}
\end{equation}
We now bring the $R(\lambda)$ terms to the left, the remaining terms to the right, and factor, obtaining
\begin{equation}\label{e:vodevid}
 R(\lambda)  (I - K(\lambda)) = F(\lambda),
\end{equation}
where
\begin{align}
 K(\lambda) &= (\lambda^2 - z^2)\chi_1(2-\chi_1)R (z)  - [\Delta, \chi_1](R_0(\lambda) - R_0(z))K_1,\\
 F(\lambda) &= R(z) + (1-\chi_1)(R_0(\lambda) - R_0(z))K_1 . \label{e:fdef}
\end{align}
Here and below we shorten formulas by using notation which displays $\lambda$-dependence but not $z$-dependence for operators other than resolvents.  The identities \eqref{e:vodevido} and \eqref{e:vodevid} are versions of Vodev's resolvent identity \cite[$(5.4)$]{vo14}.

Our first use of Vodev's identity is to prove meromorphic continuation of the resolvent to $z$, using the technique of Section 2 of \cite{cd}. Take $\chi \in C_c^\infty(\mathbb R^2)$ such that $\chi$ is $1$ near the support of $\chi_1$ and multiply \eqref{e:vodevid} on the left and right by $\chi$. That gives $\chi R(\lambda)\chi(I-K(\lambda)\chi) = \chi F(\lambda)\chi$. Observe now that $K(\lambda) \chi$ is compact $\mathcal H \to \mathcal H$, and $\|K(\lambda)\|_{\mathcal H \to \mathcal H} \to 0$ as $\lambda \to z$. Consequently, by the analytic Fredholm theorem, $\chi R(\lambda) \chi =  \chi F(\lambda)\chi(I-K(\lambda)\chi)^{-1}$ continues meromorphically from the upper half plane to $\Lambda$, the Riemann surface of $\log \lambda$.

Thus  \eqref{e:vodevido} and \eqref{e:vodevid} continue to hold for any $z$ and $\lambda$ in $\Lambda$, with $K(\lambda)$ and $K_1$  mapping 
$\mathcal H_{\text{c}}$ to $\mathcal H_{\text{c}}$, and
$R(\lambda)$ and $F(\lambda)$ mapping  $\mathcal H_{\text{c}}$ to $\mathcal D_{\text{loc}}$.

\section{Resolvent expansions with mild growth}\label{s:rs0}

The main result of this section is the proof of Theorem \ref{t:resexp},  an asymptotic expansion for the resolvent near the origin under the mild growth assumption \eqref{e:uhpbd}.

We now begin to use the assumption \eqref{e:uhpbd} on the rate at which the cutoff resolvent norms may grow near zero energy.  Parts of this proof are the same as that of \cite[Theorem 1]{ChDaob}.  That result, for Dirichlet obstacle scattering, is a special case of \eqref{e:no0resser}.

Our first lemma, from \cite{ChDaob}, is based on Vodev's identity \eqref{e:vodevid} and on part of  \cite[Proposition~3.1]{vo99}. To state it, use the free resolvent series \eqref{e:r0series}  and $F(\lambda)$ as in \eqref{e:fdef} to write
\begin{equation}\label{e:fseries}
 F(\lambda) = \sum_{j=0}^\infty \sum_{k=0}^1  F_{2j,k} \lambda^{2j} (\log \lambda)^k =  F_{01} \log \lambda + \tilde F(\lambda),
\end{equation}
where
%\begin{equation}\label{e:ftilde}
%F_{01} = -\frac{1}{2\pi}(1-\chi_1)(1 \otimes 1)K_1,
%\end{equation}
%and 
each $F_{2j,k}$ is bounded $\mathcal H_{c} \to \mathcal D_{\text{loc}}$.  Moreover, if $k\not =0$, then $F_{2j,k}$ has finite rank.%, and 
%$F_{0,1}= -\frac{1}{2\pi}(1-\chi_1)(1\otimes 1)K_1$. 

\begin{lem}\label{l:rfk}
Assume \eqref{e:uhpbd}. There is $z_0 > 0$ such that for every $z$ on the positive imaginary axis obeying $0<-iz\le z_0$,  we have
\begin{equation}\label{eq:smsing}\begin{split}
R(\lambda) =   - \frac{\log z}{1-(\log \lambda-\log z)\alpha(\lambda)} \tilde{F}(\lambda)D(\lambda) (w\otimes 1)K_1D(\lambda) + 
  \tilde{F}(\lambda)D(\lambda)\\ 
+\frac{\log \lambda}{1-(\log \lambda-\log z)\alpha(\lambda)}\left(  \left(  \frac{  -1}{2\pi} (1-\chi_1)    +\tilde{F}(\lambda) D(\lambda) w\right) \otimes 1      \right) K_1D(\lambda) 
\end{split}\end{equation}
where $w = \frac 1 {2\pi} \Delta \chi_1$,  and %$D(\lambda)$ is a bounded family of operators $\mathcal H \to \mathcal H$ such that
%\begin{equation}\label{e:balphabd}
% \|D(\lambda) - D(0)\|_{\mathcal H \to \mathcal H} = O(\lambda^2 \log \lambda), \qquad |\alpha(\lambda) - \alpha(0)| = O(\lambda^2 \log \lambda).
%\end{equation}
%More precisely,
\begin{equation}\label{e:dseries}
D(\lambda) = \sum_{j=0}^\infty \sum_{k=0}^j D_{2j,k}\lambda^{2j}(\log \lambda)^k, \quad \alpha(\lambda) %= \langle K_1 D(\lambda)w,1\rangle  
=  \sum_{j=0}^\infty \sum_{k=0}^j \alpha_{2j,k}\lambda^{2j}(\log \lambda)^k,
\end{equation}
for some  operators $D_{2j,k}\colon \mathcal H_{c} \to \mathcal H_{c}$ and complex numbers $\alpha_{2j,k}$ which depend on $z$ but not on $\lambda$. 
If $k\not =0$ then  $D_{2j,k}$ has finite rank.  The series  converge absolutely, uniformly on sectors near zero. We also have the following variant of Vodev's identity
 \begin{equation}
 \label{eq:vodevmod}R(\lambda)(I-A(\lambda)D(\lambda))= F(\lambda) D(\lambda),
 \end{equation}
where $A(\lambda) = (\log \lambda - \log z) (w \otimes 1)  K_1$.
%More precisely,
%\begin{equation}
% D(\lambda) = (I - \tilde K(\lambda))^{-1},  \quad \text{where} \quad  \tilde  K(\lambda) = K(\lambda) - A(\lambda), \quad A(\lambda) = (\log \lambda - \log z) (w \otimes 1)  K_1.
%\end{equation}
\end{lem}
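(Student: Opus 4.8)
The plan is to start from Vodev's identity \eqref{e:vodevid}, $R(\lambda)(I-K(\lambda))=F(\lambda)$, and to isolate the rank-one singular part of $K(\lambda)$ coming from the $\log\lambda$ term of the free resolvent. By \eqref{eq:R2jk} the operator $R_{01}$ in the splitting \eqref{e:r0series} has constant kernel $-1/(2\pi)$, so $R_{01}=-\tfrac1{2\pi}(1\otimes 1)$; since $[\Delta,\chi_1]1=\Delta\chi_1=2\pi w$, this gives $[\Delta,\chi_1]R_{01}=-(w\otimes 1)$ and $(1-\chi_1)R_{01}=-\tfrac1{2\pi}((1-\chi_1)\otimes 1)$. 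Substituting \eqref{e:r0series} into the definitions of $K(\lambda)$ and $F(\lambda)$ then yields $K(\lambda)=A(\lambda)+\hat K(\lambda)$ and $F(\lambda)=F_{01}\log\lambda+\tilde F(\lambda)$, where $A(\lambda)=(\log\lambda-\log z)(w\otimes 1)K_1$ is rank one, $F_{01}=-\tfrac1{2\pi}((1-\chi_1)\otimes 1)K_1$, and
\[
\hat K(\lambda):=(\lambda^2-z^2)\chi_1(2-\chi_1)R(z)-[\Delta,\chi_1]\bigl(\tilde R_0(\lambda)-\tilde R_0(z)\bigr)K_1
\]
vanishes at $\lambda=z$ and, by \eqref{e:r0series} and \eqref{e:h10ser}, expands as $\hat K(\lambda)=\hat K_{00}+\sum_{j\ge 1}\sum_{k=0}^1\hat K_{2j,k}\lambda^{2j}(\log\lambda)^k$, convergent absolutely and uniformly on sectors near zero, with $\hat K_{2j,1}=-[\Delta,\chi_1]R_{2j,1}K_1$ of finite rank because the kernel of $R_{2j,1}$ is a polynomial in $(x,y)$.

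The main step, and the one where the mild growth hypothesis \eqref{e:uhpbd} is needed, is to invert $I-\hat K(\lambda)$ near $\lambda=0$. I would work with cutoffs so that all operators act on functions supported in $\{|x|<r_1\}$ and estimate $\|z^2\chi_1(2-\chi_1)R(z)\|=o((\log z)^{-1})$ directly from \eqref{e:uhpbd}, and $\|[\Delta,\chi_1](R_{00}-\tilde R_0(z))K_1\|\le\|R_{00}-\tilde R_0(z)\|\,\|K_1\|=O(z^2\log z)\cdot o(z^{-2}(\log z)^{-1})=o(1)$, using \eqref{e:r0series} for the first factor and \eqref{e:uhpbd} again for $\|K_1\|=1+o(z^{-2}(\log z)^{-1})$; hence $\|\hat K_{00}\|=o(1)$ as $z\to 0$ along the positive imaginary axis. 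So there is $z_0>0$ with $\|\hat K_{00}\|<1$ for $0<-iz\le z_0$, and since $\hat K(\lambda)\to\hat K_{00}$ as $\lambda\to 0$ with the convergent expansion above, the Neumann series $D(\lambda):=\sum_{n\ge 0}\hat K(\lambda)^n=(I-\hat K(\lambda))^{-1}$ converges for $\lambda$ in a small enough sector near $0$. Since each $\hat K(\lambda)$-factor carries at most $\lambda^2\log\lambda$ and a $\log\lambda$ only through a finite-rank factor $\hat K_{2j,1}$, rearranging this series into powers of $\lambda^2$ and $\log\lambda$ gives the expansion \eqref{e:dseries} for $D(\lambda)$, with $k\le j$, with $D_{2j,k}$ of finite rank when $k\ne 0$, convergent absolutely and uniformly on sectors near zero; then $\alpha(\lambda):=\langle K_1 D(\lambda)w,1\rangle$ has the claimed expansion with $\alpha_{2j,k}=\langle K_1 D_{2j,k}w,1\rangle$.

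Next I would convert Vodev's identity into \eqref{eq:vodevmod} and \eqref{eq:smsing}. Right-multiplying $R(\lambda)(I-A(\lambda)-\hat K(\lambda))=F(\lambda)$ by $D(\lambda)$ and using $(I-A(\lambda)-\hat K(\lambda))D(\lambda)=I-A(\lambda)D(\lambda)$ gives \eqref{eq:vodevmod}. The operator $A(\lambda)D(\lambda)=(\log\lambda-\log z)(w\otimes 1)K_1 D(\lambda)$ has rank one, with range spanned by $w$ and trace $(\log\lambda-\log z)\alpha(\lambda)$, and the factorization $I-K(\lambda)=(I-\hat K(\lambda))(I-D(\lambda)A(\lambda))$ shows that near $0$ the resolvent $R(\lambda)$ is singular exactly where $1-(\log\lambda-\log z)\alpha(\lambda)=0$. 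Off those points, the rank-one inversion formula $(I-A(\lambda)D(\lambda))^{-1}=I+(1-(\log\lambda-\log z)\alpha(\lambda))^{-1}A(\lambda)D(\lambda)$ yields $R(\lambda)=F(\lambda)D(\lambda)+(1-(\log\lambda-\log z)\alpha(\lambda))^{-1}F(\lambda)D(\lambda)A(\lambda)D(\lambda)$. Substituting $F(\lambda)=F_{01}\log\lambda+\tilde F(\lambda)$ and $A(\lambda)=(\log\lambda-\log z)(w\otimes 1)K_1$, using $D(\lambda)(w\otimes 1)=(D(\lambda)w)\otimes 1$ and $((1-\chi_1)\otimes 1)K_1 D(\lambda)(w\otimes 1)K_1 D(\lambda)=\alpha(\lambda)((1-\chi_1)\otimes 1)K_1 D(\lambda)$, and collecting terms: the coefficient of $((1-\chi_1)\otimes 1)K_1 D(\lambda)$ collapses to $-\log\lambda/(2\pi(1-(\log\lambda-\log z)\alpha(\lambda)))$, while splitting the remaining factor $\log\lambda-\log z$ into $\log\lambda$ and $-\log z$ sends the $\tilde F$-terms to the $\tilde F(\lambda)D(\lambda)w$ contribution of the first line, the second term, and the third term $\tilde F(\lambda)D(\lambda)$ of \eqref{eq:smsing}. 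Specializing to $\lambda=z$, where $D=I$, $A=0$, $F=R(z)$, recovers the trivial identity $R(z)=R(z)$ and serves as a useful check.

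The hardest part is the second paragraph: proving that $I-\hat K(\lambda)$ is invertible near $\lambda=0$ for small $z$, with a convergent expansion of the inverse. The subtlety is that $K_1$ itself blows up like $\|R(z)\|$ as $z\to 0$, so the smallness of $\hat K_{00}$ is obtained only by pairing that blow-up against the $O(z^2\log z)$ decay of $R_{00}-\tilde R_0(z)$ --- precisely the point at which \eqref{e:uhpbd} is indispensable. An alternative organization of this step is the analytic Fredholm argument of \cite[Proposition 3.1]{vo99}, used in \cite{ChDaob}, to which I would refer for the functional-analytic bookkeeping.
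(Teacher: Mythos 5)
Your proposal is correct and follows essentially the same route as the paper, which defers to Lemma 2.3 and identity (2.23) of \cite{ChDaob}: split $K(\lambda)=A(\lambda)+\hat K(\lambda)$ off Vodev's identity, use the mild growth hypothesis \eqref{e:uhpbd} to cancel the growth of $K_1$ against the $O(z^2\log z)$ smallness of $\tilde R_0(z)-R_{00}$ so that $\|\hat K_{00}\|<1$ and $D(\lambda)=(I-\hat K(\lambda))^{-1}$ is given by a convergent Neumann series, then invert the rank-one factor $I-A(\lambda)D(\lambda)$ by Sherman--Morrison. The algebraic rearrangement into \eqref{eq:smsing}, the $k\le j$ and finite-rank structure of $D_{2j,k}$, and the check at $\lambda=z$ are all carried out correctly.
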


\begin{proof}
The identity \eqref{eq:smsing} generalizes identity (2.23) of \cite{ChDaob}, and the other assertions generalize Lemma~2.3 of the same paper. The same proofs work in our setting and we omit them.
\end{proof}

We now derive the form of the resolvent expansions, in terms of  two cases depending on $\alpha_{00}$.
\begin{lem}\label{l:basicre} Let the notation and assumptions of Lemma \ref{l:rfk} hold.  

\noindent 1. If $\alpha_{00} = 0$, then there are operators $B_{2j,k} \colon \mathcal H_{c} \to \mathcal D_{\text{loc}}$ such that 
\begin{equation}\label{eq:smsingexp}
R(\lambda) =  \sum_{j=0}^\infty \sum_{k=0}^{2j+1} B_{2j,k}  \lambda^{2j} (\log \lambda)^{k} =  B_{01}  \log \lambda +   B_{00}  +   B_{23}  \lambda^2 (\log \lambda)^3 + \cdots.
\end{equation}

\noindent 2. If $\alpha_{00} \ne 0$, then there are operators $B_{2j,k}, \ \widetilde B_{2j,-k}  \colon \mathcal H_{c} \to \mathcal D_{\text{loc}}$ such that 
\begin{align}\label{eq:smsingexp2}
R(\lambda)  &=  \sum_{j=0}^\infty  \left( \sum_{k=0}^{j}  B_{2j,k}  (\log \lambda)^k + \sum_{k=1}^{j+1}   \widetilde B_{2j,-k}  (\log \lambda - \log z - \alpha_{00}^{-1})^{-k} \right)\lambda^{2j} \nonumber \\
&=  B_{00}  +   \widetilde B_{0,-1} (\log \lambda - \log z - \alpha_{00}^{-1})^{-1} +  B_{11}   \lambda^2 \log \lambda + \cdots
\end{align}
The series  converge absolutely, uniformly on sectors near zero.  If $k\not =0$, then $B_{2j,k}$ and $\widetilde B_{2j,-k}$ have finite rank.
\end{lem}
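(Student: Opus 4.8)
The plan is to substitute the expansions furnished by Lemma~\ref{l:rfk} directly into the identity \eqref{eq:smsing} and to read off the form of $R(\lambda)$ after inverting the scalar factor $1-(\log\lambda-\log z)\alpha(\lambda)$. Throughout I would abbreviate $L=\log\lambda$ and $\zeta=\log z$, and use that, by Lemma~\ref{l:rfk} and \eqref{e:fseries}, the three building blocks $\widetilde F(\lambda)$, $D(\lambda)$, $\alpha(\lambda)$ are absolutely and uniformly convergent on sectors near zero, with $\widetilde F(\lambda)=\sum_{j\ge 0}\sum_{k=0}^{\min(1,j)}F_{2j,k}\lambda^{2j}L^k$ and with $D(\lambda)$, $\alpha(\lambda)$ having $L$-degree at most $j$ in their $\lambda^{2j}$-coefficients; moreover $F_{2j,1}$ and $D_{2j,k}$ ($k\ge 1$) have finite rank, while the first two summands of \eqref{eq:smsing} are, for each fixed $\lambda$, operators of the form (scalar)$\times g(\lambda)\otimes h(\lambda)$, hence rank $\le 1$, so all their expansion coefficients have finite rank. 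The only factor in \eqref{eq:smsing} not already of this benign shape is $G(\lambda):=(1-(L-\zeta)\alpha(\lambda))^{-1}$, and the two cases of the lemma are precisely the two possible behaviours of $G$ near $\lambda=0$.

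In Case~1, where $\alpha_{00}=0$, I would first note that $(L-\zeta)\alpha(\lambda)=\sum_{j\ge 1}\lambda^{2j}p_j(L)$ with $\deg p_j\le j+1$, so on any sector near zero this quantity tends to $0$; hence $G(\lambda)=\sum_{n\ge 0}\bigl((L-\zeta)\alpha(\lambda)\bigr)^n$ converges there, with $\lambda^{2j}$-coefficient a polynomial in $L$ of degree $\le 2j$ and constant term $1$. Inserting $G(\lambda)$ together with the other series into \eqref{eq:smsing}, multiplying out the absolutely convergent products, and accounting for the extra factor $L$ in the numerator of the first summand, a degree count gives that the $\lambda^{2j}$-coefficient of $R(\lambda)$ is a polynomial in $L$ of degree $\le 2j+1$; that is \eqref{eq:smsingexp}. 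Absolute uniform convergence on sectors is inherited. For the rank claim I would use that the first two summands have all coefficients of finite rank, while in the remaining summand $\widetilde F(\lambda)D(\lambda)$ every coefficient of $L^k$ with $k\ge 1$ is a finite sum of products one of whose factors lies among the finite-rank operators $F_{2j,1}$, $D_{2j,k}$ ($k\ge 1$); hence $B_{2j,k}$ has finite rank when $k\ge 1$.

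In Case~2, where $\alpha_{00}\ne 0$, I would set $\mu=L-\zeta-\alpha_{00}^{-1}$ and $\nu=\mu^{-1}$, so that $\nu\to 0$ and $\lambda^2\mu\to 0$ on sectors near zero. A direct computation gives $1-(L-\zeta)\alpha(\lambda)=-\alpha_{00}\mu-(\mu+\alpha_{00}^{-1})(\alpha(\lambda)-\alpha_{00})$; writing $L=\mu+\zeta+\alpha_{00}^{-1}$ and expressing $\alpha(\lambda)-\alpha_{00}=\sum_{j\ge 1}(\lambda^2\mu)^j r_j(\nu)$ with $\deg r_j\le j$, one obtains a factorization
\[
1-(L-\zeta)\alpha(\lambda)=-\alpha_{00}\,\mu\,(1-E(\lambda)),
\]
where $E(\lambda)$ is a series in the small quantities $\lambda^2\mu$ and $\nu$ that converges and vanishes at $\lambda=0$ on sectors near zero. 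Then $G(\lambda)=-\alpha_{00}^{-1}\nu\sum_{n\ge 0}E(\lambda)^n$, whose $\lambda^{2j}$-coefficient is a Laurent polynomial in $\mu$ with powers in $[-j-1,j-1]$. Substituting into \eqref{eq:smsing}, rewriting the $\widetilde F$, $D$, $\alpha$ factors and the numerator $L$ through $\mu$, and collecting terms, the $\lambda^{2j}$-coefficient of $R(\lambda)$ becomes a Laurent polynomial in $\mu$ with powers in $[-j-1,j]$. Finally I would re-expand the nonnegative powers of $\mu$ into powers of $L=\log\lambda$ by the binomial theorem and keep the negative powers as $(\log\lambda-\zeta-\alpha_{00}^{-1})^{-k}$; this is \eqref{eq:smsingexp2}. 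The finite-rank statement for $B_{2j,k}$ ($k\ge 1$) and $\widetilde B_{2j,-k}$ follows as in Case~1, since the negative powers of $\mu$ come only from $G(\lambda)$ and hence only through the two rank-$\le 1$ summands of \eqref{eq:smsing}; convergence is again inherited. One could alternatively run this argument through the variant \eqref{eq:vodevmod}.

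The main obstacle is the bookkeeping in these two degree counts, and in particular, in Case~2, justifying the factorization $1-(L-\zeta)\alpha(\lambda)=-\alpha_{00}\mu(1-E(\lambda))$ with $E(\lambda)\to 0$ on sectors, and then tracking both the highest and the lowest powers of $\mu$ through every product in \eqref{eq:smsing} so as to land exactly on the exponent ranges $0\le k\le j$ for the positive powers and $1\le k\le j+1$ for the negative powers. Everything else — convergence on sectors and the finite-rank assertions — then reduces to the properties of $\widetilde F$, $D$, $\alpha$ already recorded in Lemma~\ref{l:rfk}.
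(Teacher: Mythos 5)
Your proof is correct and follows essentially the same strategy as the paper's: substitute the series from Lemma~\ref{l:rfk} into \eqref{eq:smsing}, expand $G(\lambda)=(1-(\log\lambda-\log z)\alpha(\lambda))^{-1}$ by a geometric series (in Case~2 after first extracting the factor $\mu=\log\lambda-\log z-\alpha_{00}^{-1}$), and track degrees and ranks term by term. The only real difference is that you carry out the Case~2 factorization $1-(\log\lambda-\log z)\alpha(\lambda)=-\alpha_{00}\mu(1-E(\lambda))$ explicitly, whereas the paper simply cites the analogous computation in equation $(2.30)$ of \cite{ChDaob}.
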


\begin{proof}
\noindent 1. If $\alpha_{00} = 0$, then $|(\log \lambda - \log z)\alpha(\lambda)| \to 0$ as $\lambda \to 0$ and, using the series for $\alpha$ from \eqref{e:dseries},  we expand $1/(1-(\log \lambda - \log z)\alpha(\lambda))$ in the series
\[
\sum_{m=0}^\infty (\log \lambda - \log z)^m \alpha(\lambda)^m = \sum_{j=0}^\infty\sum_{k=0}^{2j} a_{2j,k} \lambda^{2j}(\log \lambda)^k.
\]
Inserting this,  and the series for $D$ and $\tilde F$ from \eqref{e:dseries} and \eqref{e:fseries}, into \eqref{eq:smsing} gives the resolvent series expansion \eqref{eq:smsingexp}, with $B_{01}$ of rank at most $1$.  Moreover, $B_{2j,k}$ has finite rank for any $k\not =0$.

\noindent 2. If $\alpha_{00} \ne 0$, then $|(\log \lambda - \log z)\alpha(\lambda)| \to \infty$ as $\lambda \to 0$. As in equation (2.30) of \cite{ChDaob}, we expand $1/(1-(\log \lambda - \log z)\alpha(\lambda))$ in the series
\[
 \sum_{j=0}^\infty \left( \sum_{k=0}^{j-1} b_{2j,k} (\log \lambda)^k + \sum_{k=1}^{j+1} b_{2j,-k} (\log \lambda - \log z - \alpha_{00}^{-1})^{-k} \right)\lambda^{2j}.
\]
Inserting this series and the series \eqref{e:dseries} and \eqref{e:fseries} for $D$ and $\tilde F$ into \eqref{eq:smsing} gives \eqref{eq:smsingexp2}, with all the $B_{2j,k}$, $\widetilde B_{2j,k}$ having finite
rank if $k \not =0$, and $\widetilde B_{0,-1}$ having rank at most $1$.
\end{proof}

We will use the following two lemmas for uniqueness statements when computing $B_{01}$ and $\widetilde B_{0,-1}$, and also to identify the cases $\alpha_{00} = 0$ and $\alpha_{00} \ne 0$ with the resonant $\mcg_0 \ne \{0\}$ and non-resonant $\mcg_0 = \{0\}$ cases respectively. We defer the proofs to the end of the section.

\begin{lem}\label{l:no0eig}
Assume \eqref{e:uhpbd}. Then $0$ is not an eigenvalue of $P$, i.e. $\mcg_{-2}=\{0\}$. \end{lem}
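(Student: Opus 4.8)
The goal is to show that the mild-growth assumption \eqref{e:uhpbd} forces $\mcg_{-2} = \{0\}$, i.e. that $0$ is not an $L^2$-eigenvalue of $P$. The plan is to argue by contradiction: suppose $u \in \mcg_{-2} \setminus \{0\}$, so $u \in \mathcal D$, $Pu = 0$, and $u \in L^2$. The point is that such a $u$ would produce a pole of the resolvent at $\lambda = 0$ of the form $-\lambda^{-2}(u \otimes u)$ (after normalizing $\|u\|=1$), whose norm blows up like $|\lambda|^{-2}$ — strictly faster than the permitted rate $o(\lambda^{-2}(\log\lambda)^{-1})$, giving the contradiction.

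To make this precise, I would first recall that since $0$ is a discrete (isolated) point of the spectrum in the black-box sense, one can compute the singular part of $\chi R(\lambda)\chi$ at $\lambda = 0$ via the Riesz projection / Laurent expansion in the spectral variable $\zeta = \lambda^2$. The zero eigenspace $\mcg_{-2}$ is finite-dimensional (compactness of $(P+z_0)^{-1}|_{\mathcal B}$ plus the exterior estimate forces eigenfunctions to decay, hence finite multiplicity, as in Lemma \ref{l:no0eig}'s context), and the most singular term of the Laurent expansion of $(P - \zeta)^{-1}$ at $\zeta = 0$ is $-\Pe/\zeta = -\Pe/\lambda^2$, where $\Pe$ is the orthogonal projection onto $\mcg_{-2}$. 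Thus, picking $\chi \equiv 1$ near $\mathcal B$ and near the (necessarily compactly supported modulo exterior decay) eigenfunctions, we get $\chi R(i\kappa)\chi = \kappa^{-2}\chi\Pe\chi + O(\text{less singular})$ as $\kappa \downarrow 0$. If $\mcg_{-2} \ne \{0\}$ then $\chi \Pe \chi \ne 0$ for suitable $\chi$ (since eigenfunctions in $\mcg_{-2}$ decay and are not supported entirely outside any compact set — indeed $Pu = 0$ with $u \in L^2$ and $u|_{\mathbb R^2 \setminus \mathcal B}$ harmonic and $L^2$ forces $u$ to be $O(|x|^{-1})$, in particular $u \not\equiv 0$ on the support of $\chi$), so $\|\chi R(i\kappa)\chi\| \ge c\kappa^{-2}$ for some $c > 0$ and all small $\kappa > 0$. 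This contradicts \eqref{e:uhpbd}, which demands $\|\chi R(i\kappa)\chi\| = o(\kappa^{-2}(\log\kappa)^{-1}) = o(\kappa^{-2})$.

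An alternative, more self-contained route that avoids invoking abstract spectral-projection machinery: use Vodev's identity in the form \eqref{eq:smsing} or the variant \eqref{eq:vodevmod} from Lemma \ref{l:rfk} directly. Under \eqref{e:uhpbd} all the operators $D(\lambda)$, $\tilde F(\lambda)$, and the scalar $\alpha(\lambda)$ appearing there have convergent expansions with at worst a $\log\lambda$ singularity at $0$ (no negative powers of $\lambda$), so the right-hand side of \eqref{eq:smsing} is, in either of the two cases of Lemma \ref{l:basicre}, $O(\log\lambda)$ or better as $\lambda \to 0$ — in particular $o(\lambda^{-2})$, with no $\lambda^{-2}$ pole. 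Hence $\lambda^2 R(\lambda) \to 0$ as $\lambda \to 0$. But if $u \in \mcg_{-2} \setminus \{0\}$, then applying $R(\lambda) = (P - \lambda^2)^{-1}$ to $u$ gives $(P - \lambda^2) R(\lambda) u = u$ while $Pu = 0$, so $R(\lambda)u = -\lambda^{-2}u$ (here $u$ is a genuine $L^2$-eigenfunction so this identity is valid in $\mathcal H$); thus $\lambda^2 R(\lambda) u = -u \not\to 0$, a contradiction.

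The main obstacle is making rigorous the claim that \eqref{e:uhpbd} genuinely precludes a $\lambda^{-2}$ pole — i.e. either (a) justifying that the finite-rank/abstract Laurent expansion of $R$ at $0$ has leading term exactly $-\Pe/\lambda^2$ in the black-box setting, including the non-self-adjoint case (A2), where one works with $P$ and $P^*$ and uses Lemma \ref{l:resA2}; or (b) in the Vodev-identity approach, checking that none of $D_{2j,k}$, $F_{2j,k}$, $\alpha_{2j,k}$ contributes a negative power of $\lambda$, which is already packaged into Lemma \ref{l:rfk}. The cleanest writeup is probably (b): it sidesteps eigenfunction-decay and multiplicity discussions entirely, since Lemma \ref{l:rfk}'s conclusion that $R(\lambda) = \tilde F(\lambda)D(\lambda) + (\text{terms with the } 1/(1-(\log\lambda-\log z)\alpha(\lambda)) \text{ factor})$ immediately gives $\lambda^2 R(\lambda)\chi \to 0$ in norm, which is all that is needed to contradict $R(\lambda)u = -\lambda^{-2} u$ for a hypothetical eigenfunction $u$.
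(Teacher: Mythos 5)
Your overall intuition — that a nontrivial $\mcg_{-2}$ would force a $\lambda^{-2}$ pole that violates \eqref{e:uhpbd} — is the right heuristic, but both of your proposed routes sidestep the one genuine difficulty, and that difficulty is exactly where the paper's proof does its work. The issue is that an eigenfunction $\phi\in\mcg_{-2}$ is \emph{not} compactly supported: it decays like $|x|^{-2}$ but lives on all of $\mathbb R^2\setminus\mathcal B$. Because of this, neither route as stated closes.

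For route (b): Lemma \ref{l:rfk} and its expansion \eqref{eq:smsing} control $R(\lambda)$ only as an operator $\mathcal H_c\to\mathcal D_{\loc}$; the factor $K_1 D(\lambda)$ on the right side of \eqref{eq:smsing} takes compactly supported inputs. The identity $R(i\kappa)\phi = -\kappa^{-2}\phi$ involves the resolvent acting on the non--compactly-supported $\phi$. You write ``$\lambda^2 R(\lambda)\chi\to 0$ contradicts $R(\lambda)u=-\lambda^{-2}u$,'' but the $\chi$ is on the wrong side: controlling $\lambda^2\chi R(\lambda)\chi$ does not control $\lambda^2\chi R(\lambda)\phi$, because $\chi\phi\ne\phi$. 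To bridge the gap you would have to control $\chi R(i\kappa)(1-\chi_1)\phi$ separately, and via \eqref{e:rzr0z} this reduces to estimating $\tilde\chi R_0(i\kappa)$ applied to the non--compactly-supported tail of $\phi$. That estimate is the real content of the lemma and your proposal does not address it. The paper's proof establishes $\|\tilde\chi R_0(i\kappa)(1-\chi_0)\phi\|=O(\log\kappa)$ by combining: (i) the Hankel-function asymptotics of the kernel of $R_0$; (ii) the crucial cancellation $\int_{|y|>\rho}H_0^{(1)}(i\kappa|y|)\phi(y)\,dy=0$, which holds because the $l=0$ Fourier mode of $\phi$ vanishes for $|y|>\rho$ (this is where $\phi\in\mcg_{-2}$, rather than merely $\phi\in L^2$, is used); and (iii) an integral estimate on $|H_0^{(1)}(is(1+\varepsilon_1))-H_0^{(1)}(is)|$ near $s=0$. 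Without something playing the role of (ii) the needed bound simply fails, since $R_0(i\kappa)$ applied to a generic $L^2$ function with $|x|^{-2}$ decay is \emph{not} $O(\log\kappa)$ after cutoff.

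For route (a): $0$ is not an isolated point of the spectrum of $P$ --- the essential spectrum is $[0,\infty)$ --- so there is no Riesz projection and no classical Laurent expansion of $(P-\zeta)^{-1}$ at $\zeta=0$. The leading coefficient $B_{-2,0}=-\Pe$ is established in the paper (Proposition \ref{p:resexpsa}), but that requires a genuine argument via Vodev's identity, works only in the self-adjoint case (A1), and lives in a later section (the paper explicitly flags this as an \emph{alternative} deduction for self-adjoint $P$). As stated, your route (a) invokes machinery that is not available at this threshold, and in any case would not cover assumption (A2).

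In short: the contradiction strategy is fine, but the non-compact support of $\phi$ means you cannot apply \eqref{e:uhpbd} or Lemma \ref{l:rfk} directly, and the key missing ingredient is the $O(\log\kappa)$ bound on $\tilde\chi R_0(i\kappa)(1-\chi_0)\phi$, which exploits the vanishing of the $l=0$ Fourier mode of the eigenfunction at infinity.
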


\begin{lem}\label{l:pubd}
Suppose $R(i\varepsilon)$ has a limit $\mathcal H_{\text{c}} \to \mathcal D_{\text{loc}}$ as $\varepsilon\to 0^+$. Then $\mcg_0 = \{0\}$.
\end{lem}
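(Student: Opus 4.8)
The plan is to prove Lemma~\ref{l:pubd} by a contradiction argument: assume $R(i\varepsilon)$ converges as $\varepsilon \to 0^+$ to some bounded operator $\mathcal H_c \to \mathcal D_{\loc}$, and suppose toward a contradiction that there is a nonzero $u \in \mcg_0$, i.e.\ $u \in \mathcal D_{\loc}$ with $Pu = 0$ and $u = O(1)$ at infinity. The idea is that such a $u$ is an ``almost-eigenfunction'' in the sense that $P$ kills it but it fails to be in $\mathcal D$ only because of its behavior at spatial infinity; the resolvent applied to $Pu$-type data, or rather $R(i\varepsilon)$ applied to the localization error $[\Delta,\chi_1]u$, should blow up as $\varepsilon \to 0$ unless $u$ actually decays, and a quantitative estimate via the boundary pairing will force $u \equiv 0$.

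Concretely, first I would use Vodev's identity in the form \eqref{e:rzr0z}, or more directly Lemma~\ref{l:sGreens}/Lemma~\ref{l:bpid}, to relate $R(i\varepsilon)[\Delta,\chi_1]u$ to $u$ itself. Since $\chi_1 u \in \mathcal D_{\loc}$ agrees with $u$ near $\mathcal B$ and $(P - \varepsilon^2)(\chi_1 u) = [\Delta,\chi_1]u - \varepsilon^2 \chi_1 u$ (using $Pu = 0$ and that $P = -\Delta$ off the support of $\chi_1$), we get
\begin{equation*}
 \chi_1 u = R(i\varepsilon)\big([\Delta,\chi_1]u\big) - \varepsilon^2 R(i\varepsilon)(\chi_1 u).
\end{equation*}
Multiplying by a fixed cutoff $\chi$ which is $1$ on $\supp \chi_1$, the left side is independent of $\varepsilon$ and nonzero, while the right side, by the assumed convergence of $R(i\varepsilon)$ and the factor $\varepsilon^2$ in the second term, converges as $\varepsilon \to 0^+$. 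So $u_0 := \lim_{\varepsilon \to 0^+} R(i\varepsilon)([\Delta,\chi_1]u)$ exists in $\mathcal D_{\loc}$ and satisfies $\chi_1 u = u_0$ near $\mathcal B$. Now the key point: $u_0 = R(i\varepsilon\to 0)([\Delta,\chi_1]u)$ should be shown to satisfy $P u_0 = [\Delta,\chi_1]u$ and to have better decay at infinity than a generic element of $\mcg_0$ --- in fact, by the exponential decay \eqref{e:rexp} of $R(i\varepsilon)$-outputs for $\varepsilon > 0$ combined with the uniform limit, $u_0$ should be $o(1)$, or at least its $\fc_0, \fc_{\log}$ Fourier data should be controlled. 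Then $\chi_1 u - u_0 \in \mcg_{-2}$ would have to vanish by Lemma~\ref{l:no0eig}, but actually the cleaner route is: $w := u - u_0$ (interpreting $u_0$ as extended by the resolvent formula) satisfies $Pw = Pu - [\Delta,\chi_1]u = -[\Delta,\chi_1]u$ near... I need to be careful; the honest version is that $(1-\chi_1)u - $ (remainder) lies in a space to which Lemma~\ref{l:no0eig} applies, forcing decay, and then the Schrödinger-type energy identity \eqref{e:nablauvu}-style argument --- or rather Lemma~\ref{l:F'pairing} and Corollary~\ref{c:ubF's} on the $\mathcal F_l'$ spaces --- pins down that $u \in \mcg_{-2} = \{0\}$.

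A more robust way to organize the contradiction, which I would actually pursue, is through the boundary pairing. By Lemma~\ref{l:bpl} with $\phi = [\Delta,\chi_1]u$ and a suitable $y_0$, or by pairing $R(i\varepsilon)([\Delta,\chi_1]u)$ against $\overline{R_0(i\varepsilon)}(\bullet, y_0)$ and using Lemma~\ref{l:bdrypair} to extract Fourier coefficients, one obtains an identity whose left-hand side involves $\|\nabla(\chi_1 u)\|^2$-type quantities (or the boundary pairing $\bp(u, \bar u)$, which by Lemma~\ref{l:F'pairing} vanishes since $u \in \mathcal F_0'$ if $u \notin \mcg_{-1}$, or $u$ has even faster decay) and whose right-hand side is $O(\varepsilon^2)$ or converges. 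Passing to the limit gives a Dirichlet-type identity that forces $\nabla u \equiv 0$ where $P = -\Delta$, hence (using $u = O(1)$ and unique continuation, or just that a bounded harmonic function constant at infinity extending to a global solution of $Pu = 0$ must itself be constant and then in $\mathcal D$) that $u \in \mcg_{-2}$, which is $\{0\}$ by Lemma~\ref{l:no0eig}.

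The main obstacle I expect is the bookkeeping at infinity: making rigorous the claim that the limit $u_0 = \lim_{\varepsilon\to 0^+} R(i\varepsilon)([\Delta,\chi_1]u)$ genuinely has strictly better decay (or strictly smaller $\mathcal F_0$-data) than $u$, so that $u - (\text{something built from }u_0)$ lands in $\mcg_{-2}$ rather than merely back in $\mcg_0$. The exponential decay \eqref{e:rexp} holds for each fixed $\varepsilon > 0$ but degenerates as $\varepsilon \to 0$, so one cannot simply pass it to the limit; the correct substitute is a boundary-pairing computation (Lemmas~\ref{l:bpl} and~\ref{l:bdrypair}) that tracks the $\fc_0$ and $\fc_{\log}$ coefficients of $R(i\varepsilon)([\Delta,\chi_1]u)$ uniformly in $\varepsilon$, combined with the vanishing $\bp(u,\bar u) = 0$ from Lemma~\ref{l:F'pairing}. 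Once that is in place, Lemma~\ref{l:no0eig} closes the argument immediately. I would also double-check that the cases $\mcg_0 \setminus \mcg_{-1} \neq \emptyset$ versus $\mcg_{-1}\setminus\mcg_{-2} \neq \emptyset$ are both covered --- by Corollary~\ref{c:g0glog} and Lemma~\ref{l:no0eig} the relevant nullspace structure is constrained, which should streamline this.
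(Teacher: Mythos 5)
There is a genuine gap, and you half-identify it yourself: your central strategy---show that $u \in \mcg_0$ actually lies in $\mcg_{-2}$ and then invoke Lemma~\ref{l:no0eig}---does not close with the tools you sketch. Your first decomposition gives (after fixing signs: $\lambda = i\varepsilon$ means $\lambda^2 = -\varepsilon^2$, and $P(\chi_1 u) = -[\Delta,\chi_1]u$) the identity $\chi_1 u = -R(0)[\Delta,\chi_1]u$. But this is a dead end: the left side is compactly supported, so it tells you nothing new about the decay of $u$ at infinity, and the conclusion that $R(0)[\Delta,\chi_1]u$ is compactly supported is consistent with $u \in \mcg_0 \setminus \mcg_{-1}$. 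Likewise, $\bp(u,\bar u) = 0$ is vacuous for $u \in \mcg_0$: since $u$ is bounded, $\fc_{\log}(u) = 0$ and $v_l(u) = 0$ for all $l \ge 1$, so \emph{every} term in Lemma~\ref{l:bdrypair} vanishes automatically, yielding no constraint. And the Dirichlet-energy identity you invoke as a fallback works for Schr\"odinger operators (Section~\ref{s:schr}) but not for general black-box perturbations, where you cannot integrate by parts across $\mathcal B$.

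The paper's argument is structurally different and sidesteps all of this. Fix $U \in \mcg_0$ but test against an \emph{arbitrary} $\phi \in \mch_c$ (not $[\Delta,\chi_1]u$): Lemma~\ref{l:bpid} applied to $R(0)\phi$ and $U$ gives $\langle \phi, U\rangle = \bp(R(0)\phi, U)$. The crux is then to show $R(0)\phi$ is bounded and harmonic at infinity. This is extracted from identity \eqref{eq:lco}: writing out the $\log(i\varepsilon)$ coefficient via $R_{01}$, the mere existence of the limit $R(0) = \lim_{\varepsilon\to 0^+} R(i\varepsilon)$ forces $\int \bigl((1-\chi_1 + [\Delta,\chi_1]R(0))\phi\bigr)\,dx = 0$ (this is \eqref{e:1chi1phi0}), which kills the $\log|x|$ term in the $R_{00}$ expansion and shows $(1-\chi_1)R(0)\phi = c_\phi + O(|x|^{-1})$. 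Then $\bp(R(0)\phi, U) = 0$ by Lemma~\ref{l:bdrypair} (two bounded harmonic functions), hence $\langle \phi, U\rangle = 0$ for all $\phi \in \mch_c$, hence $U = 0$---no need to first place $U$ in $\mcg_{-2}$. The ingredient you are missing is precisely the observation that the \emph{existence} of the zero-energy limit forces the $\fc_{\log}$ coefficient of $R(0)\phi$ to vanish, which is exactly the ``uniform $\fc_0$, $\fc_{\log}$ tracking'' you said you would need but did not supply.
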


To compute $B_{01}$ and $\widetilde B_{0,-1}$, we expand the equation $(P-\lambda^2)R(\lambda)=I$, for $\im \lambda>0$, using \eqref{eq:smsingexp} or \eqref{eq:smsingexp2}, and compare powers of $\lambda$, $\log \lambda$, and 
$(\log \lambda -\log z-\alpha_{00}^{-1})^{-1}$. This gives the following identities in the sense of operators $\mathcal H_c \to \mathcal D_{\text{loc}}$:
\begin{equation}\label{eq:niceids}
\begin{array}{lll}
P B_{2j,k}=B_{2j-2,k}, & P \widetilde B_{2j,k}=\widetilde B_{2j-2,k}& \text{if $(j,k)\not =(0,0)$,}\\
PB_{00}=I,
\end{array}
\end{equation}
where we understand that $B_{2j,k}=0$ and $\widetilde B_{2j,k}=0$ for the terms not appearing in \eqref{eq:smsingexp} or \eqref{eq:smsingexp2}.

\begin{lem} \label{l:A01} Let the notation and assumptions of Lemma \ref{l:basicre} hold, and suppose $\alpha_{00}=0$.  Then  
\begin{equation}\label{e:a01u}
B_{01} =  -\frac{1}{2\pi} U_0 \otimes U_0, \; \text{in case (A1), or } \; B_{01}=-\frac{1}{2\pi}U_0\otimes \overline{U_0}\; \text{in case (A2)}
\end{equation}
 and $U_0$ is the unique element of $\mathcal G_0$ obeying
\begin{equation}\label{e:uaasy}
 U_0(x) =   1 - \int_{\mathbb R^2}  \frac {x \cdot y}{|x|^2} (K_1D_{00}w)(y)dy 
+ O(|x|^{-2}), \qquad \text{as } |x| \to \infty.
\end{equation}
\end{lem}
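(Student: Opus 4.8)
The plan is to extract $B_{01}$ from the representation \eqref{eq:smsing} and then characterize it via the spaces $\mcg_l$. First I would read off the coefficient of $\log\lambda$ in \eqref{eq:smsingexp}: since $\alpha_{00}=0$, the scalar prefactor $(1-(\log\lambda-\log z)\alpha(\lambda))^{-1}$ is a convergent power series in $\lambda^{2}$ and $\log\lambda$ with constant term $1$, and the only other source of a $\log\lambda$ at order $\lambda^0$ in \eqref{eq:smsing} is the explicit $\log\lambda$ multiplying the first line. Hence $B_{01}=\bigl(\frac{-1}{2\pi}(1-\chi_1)+\tilde F(0)D_{00}w\bigr)\otimes 1\cdot K_1D_{00}$, where I must be careful that the "$\otimes 1$" and the trailing $K_1D(\lambda)$ interact: writing it out, $B_{01}f = \bigl(\frac{-1}{2\pi}(1-\chi_1)+\tilde F_{00}D_{00}w\bigr)\langle K_1D_{00}f,1\rangle$. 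So $B_{01}$ has rank one, of the form $U_0\otimes \Phi$ where $U_0:=\frac{-1}{2\pi}(1-\chi_1)+\tilde F_{00}D_{00}w$ (with $\tilde F_{00}=F_{00}=R(z)+(1-\chi_1)(R_{00}-R_0(z))K_1$ from \eqref{e:fdef}, \eqref{e:fseries}) and $\Phi$ is the vector pairing against $\overline{K_1D_{00}f}$, i.e. $\Phi = K_1^* D_{00}^* 1$ in case (A1) or its involution-analog in case (A2).

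Next I would show $U_0\in\mcg_0$ with the stated asymptotics. From the identities \eqref{eq:niceids} applied to the top line of \eqref{eq:smsingexp}, namely $PB_{01}=B_{-2,1}=0$, we get $PU_0\otimes\Phi=0$, hence $PU_0=0$; membership in $\mathcal D_{\loc}$ is automatic from the mapping properties in Lemma \ref{l:rfk}. The asymptotic \eqref{e:uaasy} I would obtain by expanding $\tilde F_{00}D_{00}w = F_{00}D_{00}w$: the $R(z)$ and $(1-\chi_1)R_0(z)$ pieces are exponentially decaying by \eqref{e:r0exp}, \eqref{e:rexp}, while $(1-\chi_1)R_{00}K_1D_{00}w$ contributes, via the kernel expansion for $R_{00}$ in \eqref{eq:R2jk}, the constant $-\frac{1}{2\pi}(\log|x|-\gamma_0)\langle K_1D_{00}w,1\rangle$ plus the dipole term $\frac{1}{2\pi}\frac{x}{|x|^2}\cdot\int y\,(K_1D_{00}w)(y)\,dy+O(|x|^{-2})$. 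Crucially the $\log|x|$ coefficient is $-\frac{1}{2\pi}\langle K_1D_{00}w,1\rangle=-\frac{1}{2\pi}\alpha_{00}=0$ by hypothesis, so the log terms cancel and what survives is a constant plus a $1/|x|$ dipole. Matching the constant: I expect $-\frac{1}{2\pi}(1-\chi_1)$ contributes $-\frac{1}{2\pi}$ far away and combining with the $\gamma_0$-term from $R_{00}$ and the value of $F_{00}D_{00}$ acting on the constant, one checks the constant equals $1$ — this is where I would invoke the relation $F_{00}D_{00}$ restricted appropriately, or more cleanly, use $PB_{00}=I$ together with $PB_{01}=0$ and a boundary-pairing identity. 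Actually the slick route to "constant $=1$" is: apply Lemma \ref{l:Ucommint}, which says $c_{\log}(U_0)=R_{01}[\Delta,\chi_1]U_0$; since we already know $c_{\log}(U_0)=0$, and separately a direct computation using $[\Delta,\chi_1]U_0 = [\Delta,\chi_1]\tilde F_{00}D_{00}w$ and the explicit structure of $\tilde F$, pins down $c_0(U_0)$. I will likely instead normalize: any element of $\mcg_0$ with vanishing $\log$-part and vanishing $l=\pm1$ dipole terms is, modulo faster-decaying nullspace elements, determined; and by Lemma \ref{l:no0eig}, $\mcg_{-2}=\{0\}$, so such an element is genuinely unique once the constant is fixed — hence I only need to verify the constant is nonzero and rescale, but the rescaling is forced to be $1$ by consistency with $B_{01}=U_0\otimes U_0$ and the self-adjoint pairing below.

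Then I would identify the right-hand factor $\Phi$ with $U_0$ (or $\overline{U_0}$ in case (A2)). In case (A1), $P=P^*$, so $B_{01}=B_{01}^*$ because $R(\lambda)^*=R(\bar\lambda^{\,\ast})$-type symmetry forces the $\log\lambda$-coefficient to be self-adjoint (more precisely, from $R(\bar\lambda)^*= R(\lambda)$ on the appropriate sheet one compares coefficients; for $\lambda$ on the imaginary axis $\log\lambda$ is real up to the fixed $i\pi/2$, and one tracks this carefully). A rank-one self-adjoint operator $U_0\otimes\Phi$ with $U_0$ as found must have $\Phi$ a scalar multiple of $U_0$; combined with the normalization $U_0(x)=1+O(|x|^{-1})$ and $PU_0=0$ and uniqueness (via $\mcg_{-2}=\{0\}$ and Lemma \ref{l:Ucommint} forcing $c_{\log}=0$), we get $\Phi=-\frac{1}{2\pi}\cdot(-2\pi)U_0=U_0$ up to the explicit constant, giving $B_{01}=-\frac1{2\pi}U_0\otimes U_0$. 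In case (A2), Lemma \ref{l:resA2} gives $R(\lambda)^*f=\overline{R(\lambda)\bar f}$, and the analogous coefficient-matching yields $B_{01}^*f=\overline{B_{01}\bar f}$, which for a rank-one operator forces the form $-\frac1{2\pi}U_0\otimes\overline{U_0}$. For uniqueness of $U_0$: any two elements of $\mcg_0$ with the same asymptotic expansion through order $O(|x|^{-2})$ differ by an element of $\mcg_{-2}=\{0\}$ (Lemma \ref{l:no0eig}), so $U_0$ is unique.

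The main obstacle I anticipate is pinning down the two constants precisely: (i) showing the leading constant in the expansion of $U_0$ is exactly $1$ rather than merely a nonzero number, and (ii) showing the overall scalar in front is exactly $-\frac1{2\pi}$. Both amount to carefully tracking the interplay between the explicit $-\frac{1}{2\pi}(1-\chi_1)$ term, the $\gamma_0$ and $\log|x|$ terms in $R_{00}$, the definition of $F_{00}$, and the normalization $\langle K_1D_{00}w,1\rangle=\alpha_{00}=0$. The cleanest way to avoid a brute-force kernel computation is to use the boundary-pairing machinery: apply Lemma \ref{l:bpl} or Lemma \ref{l:sGreens} to $U_0$ against the constant function $1$ and against $R_{00}(\cdot,y)$, using Lemma \ref{l:bdrypair} to convert the boundary pairing into the Fourier coefficients $c_0,c_{\log},v_{\pm1}$. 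The identity $PB_{00}=I$ from \eqref{eq:niceids}, paired with $B_{01}$, should give exactly the relation that forces the constant to be $1$; I expect this is the technical heart of the argument and the place where the most care is needed.
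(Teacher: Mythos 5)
Your outline matches the paper's approach: read off the rank-one $B_{01}$ from \eqref{eq:smsing}, show $PU_0=0$ via \eqref{eq:niceids}, get the asymptotics from the $R_{00}$ kernel and the cancellation $\alpha_{00}=\langle K_1D_{00}w,1\rangle=0$, use self-adjointness (resp.\ Lemma \ref{l:resA2}) to reduce to $B_{01}=c\,U_0\otimes U_0$ (resp.\ $c\,U_0\otimes\overline{U_0}$), and then pin down $c$ by boundary pairing. However, there is a gap in the final step, and you misidentify where the difficulty lies.

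The leading constant $c_0(U_0)=1$ is not the hard part: once you set $U_0:=1-\chi_1-2\pi F_{00}D_{00}w$ (i.e.\ rescale your $U_0'$ by $-2\pi$), the constant falls out automatically because $1-\chi_1\to1$ while $F_{00}D_{00}w(x)=\tfrac1{2\pi}\int\tfrac{x\cdot y}{|x|^2}(K_1D_{00}w)(y)dy+O(|x|^{-2})$, with no log and no constant term precisely because $\alpha_{00}=0$ kills them in the $R_{00}$ expansion. The genuinely hard part is the overall scalar $c$, and your specific device for it --- pairing $U_0$ against the constant function $1$ --- does not close the argument: by Lemma \ref{l:bdrypair}, $\bp(U_0,1)=-2\pi c_{\log}(U_0)=0$, so that pairing only recovers the fact $c_{\log}(U_0)=0$, which you already know, and gives no information about $c$. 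What you need is the paper's device: test with $\phi=[\Delta,\chi_1]\log|x|$, extract the coefficient of $\log\lambda$ from Lemma \ref{l:bpl} (paired against $\overline{R_0(\lambda)}(\cdot,0)$) to get the relation $\bp(B_{01}\phi,\overline{R}_{00}(\cdot,0))+\bp(B_{00}\phi,\overline{R}_{01}(\cdot,0))=0$, compute $B_{01}\phi=-2\pi c\,U_0$ directly, and --- the crucial step you do not mention --- show $B_{00}\phi=\log|x|+O(1)$ by observing that $\phi_1:=B_{00}\phi-(1-\chi_1)\log|x|$ is in the nullspace of $P$, hence in $\mcg_{\log}$, hence $O(1)$ by Corollary \ref{c:g0glog} (since $U_0\in\mcg_0\setminus\mcg_{-1}$). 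Without Corollary \ref{c:g0glog} you cannot rule out a $\log|x|$ contribution to $\phi_1$, and the computation of $c$ does not close. Feeding these into Lemma \ref{l:bdrypair} and the kernels \eqref{eq:R2jk} then gives $c=-\tfrac1{2\pi}$.
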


\begin{proof}
From \eqref{eq:smsing} we see that the range of $B_{01} $ is spanned by a function $U_0$ given 
%up to a constant multiple 
by 
\begin{equation}\label{e:1chifdw}
 U_0 = 1-\chi_1 - 2\pi F_{00}D_{00}w.
\end{equation}
Furthermore, $PU_0=0$ follows from \eqref{eq:niceids}.
 Given the expansion \eqref{e:uaasy}, uniqueness of $U_0$ follows from  $\mcg_{-2} = \{0\}$, by Lemma \ref{l:no0eig}. 
Next we show that
\begin{equation}\label{e:f00asy}
 F_{00} D_{00} w(x) = \frac 1{2\pi} \int_{\mathbb R^2}  \frac {x \cdot y}{|x|^2} (K_1D_{00}w)(y)dy + O(|x|^{-2}).
\end{equation}
By definition \eqref{e:fdef},
\begin{equation}\label{e:f00form}
 F_{00} = R(z)\chi +(1-\chi_1)(R_{00} - R_0(z))K_1.
\end{equation}
But the  $R_0(z)$ and $R(z)$ terms are $O(e^{-|z||x|})$ by \eqref{e:r0exp} and \eqref{e:rexp} because $z$ is on the positive imaginary axis. Thus, with $f = K_1D_{00}w,$ \eqref{e:f00asy} follows from the fact that for $|x|$ large enough we have
\begin{equation}\label{e:r00k1}
 R_{00}K_1D_{00}w(x) =  \frac 1 {4\pi}\sum_{m=1}^\infty \frac 1 {m|x|^{2m}} \int_{\mathbb R^2}   (2x\cdot y - |y|^2)^m f(y)\,dy, 
\end{equation}
where we used the $R_{00}$ asymptotic \eqref{eq:R2jk} and the fact that   $\alpha_{00} = \int_{\mathbb R^2} f %= \int_{\mathbb R^2} K_1 D_{00} w 
= 0$.  This shows
(\ref{e:uaasy}).

If $P$ is self-adjoint, then writing 
\begin{equation}\label{e:rizuv}
\langle R(i|\lambda|) u, v \rangle - \langle u,  R(i|\lambda|) v \rangle=0, 
\end{equation}
for arbitrary $u$ and $v$ in $\mathcal H_{c}$ and  $|\lambda|>0$ small enough, substituting $R(i|\lambda|) = B_{01} \log |\lambda| + O(1)$, and extracting the coefficient of $\log|\lambda|$, we see that $B_{01}=c\, U_0 \otimes U_0$, for some $c \in \mathbb R$.  
%Equation \eqref{eq:smsing} ensures $c_A\not =0$.

If, instead $P$ and $\mch$ satisfy the hypotheses (A2), then $B_{01}=c \, U_0 \otimes v$ for some $v\in \mch_{loc}$ and some $c \in \Complex$.  Expanding
$R(\lambda)$ as in (\ref{eq:smsingexp}), from the coefficient of $\log \lambda$ in Lemma \ref{l:resA2} we obtain $v=\overline{U_0}$.  
%Again\eqref{eq:smsing} ensures $c_A\not =0$.

Now we evaluate %the constant 
$c$. Let $\phi = [\Delta,\chi_1]\log|x|$. Since  $\chi_1(x) = 0$ if $|x| \ge r_1$,
 the coefficient of $\log \lambda$ in Lemma \ref{l:bpl} yields
\begin{equation}\label{e:ba01phi}
 \bp(B_{01}\phi,\overline{R}_{00}(\bullet, 0))+\bp(B_{00}\phi,\overline{R}_{01}(\bullet, 0))=0.
\end{equation}
Now we evaluate $B_{01}\phi$ and $B_{00}\phi$.
 If $P$ is
self-adjoint, then, by Lemmas \ref{l:bpid} and \ref{l:bdrypair}, we obtain
\[
\langle  \phi,U_0\rangle = \langle \left(P(1-\chi_1)-(1-\chi_1)P\right)\log |\bullet |,U_0\rangle = \bp(\log |\bullet |, U_0)=-2\pi.
\]
Using this in $B_{01} \phi =c\, (U_0 \otimes U_0 )\phi = c\langle  \phi,U_0\rangle U_0$ yields
\begin{equation}\label{e:a01phi}
B_{01} \phi= -2\pi c  \, U_0. 
\end{equation}
A similar argument in case $P$ satisfies hypotheses (A2) also yields \eqref{e:a01phi}. 

To compute $B_{00} \phi$ we observe that, because $P B_{00} \phi = \phi = [\Delta,\chi_1]\log|x| = -\Delta(1-\chi_1)\log|x|$, it follows that $\phi_1 : = B_{00} \phi - (1-\chi_1) \log |x|$ is in the nullspace of $P$. 
Next, by the definition of $F$ \eqref{e:f00form}, we have $F_{00} \psi = O(\log |x|)$ as $|x| \to \infty$, for any $\psi \in \mch_c$. Hence, by the resolvent expansion \eqref{eq:smsing}, we get $B_{00} \phi = O(\log|x|)$ and thus also $\phi_1 = O(\log|x|)$ and so $\phi_1 \in \mcg_{\log}$. Since $U_0 \in \mathcal G_0\setminus\mcg_{-1}$, by Corollary \ref{c:g0glog} it follows that $\phi_1 = O(1)$ as $|x| \to \infty$. Hence
\begin{equation}\label{e:a00phi}
 B_{00} \phi = \log |x| + O(1), \qquad \text{as } |x| \to \infty.
\end{equation}
Using Lemma \ref{l:bdrypair} to evaluate the boundary pairings in \eqref{e:ba01phi}, and plugging in \eqref{e:a01phi}, \eqref{e:a00phi} and the free resolvent asymptotics \eqref{eq:R2jk}, implies that $2\pi(-2\pi c)(\frac {-1} {2\pi}) - 2\pi (1) (\frac {-1}{2\pi})=0$, or $c = \frac {-1}{2\pi}$. 
 \end{proof}

\begin{lem} \label{l:B01} Let the notation and assumptions of Lemma \ref{l:basicre} hold, and suppose $\alpha_{00}\not=0$.  Then  
the operator $\widetilde B_{0,-1}$ from \eqref{eq:smsingexp2} is given by 
\begin{equation}\label{e:b01u}
\widetilde B_{0,-1} =  \frac 1 {2\pi} U_{\log} \otimes  U_{\log} \; \text{in case (A1), or } \; \widetilde B_{0,-1} =  \frac 1 {2\pi}  U_{\log}  \otimes \overline{ U_{\log} } \; \text{in case (A2)}
\end{equation}
where  $ U_{\log} $ is the unique element of $\mathcal G_{\log}$  such that $c_{\log}( U_{\log} ) =1$. Moreover, 
\begin{equation}\label{e:ubasy}
 c_0(U_{\log}) =    \alpha_{00}^{-1} + \log z -  \gamma_0.
\end{equation}
\end{lem}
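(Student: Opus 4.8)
The plan is to mimic the proof of Lemma~\ref{l:A01}, now in the complementary case $\alpha_{00}\neq 0$. First I would extract the singular coefficient $\widetilde B_{0,-1}$ from \eqref{eq:smsing}. When $\alpha_{00}\neq 0$ the scalar factor $\bigl(1-(\log\lambda-\log z)\alpha(\lambda)\bigr)^{-1}$ has, as in the proof of Lemma~\ref{l:basicre}(2), leading singular part $-\alpha_{00}^{-1}(\log\lambda-\log z-\alpha_{00}^{-1})^{-1}$ near $\lambda=0$; inserting this, together with the leading terms $F_{00}$ of $\tilde F(\lambda)$ and $D_{00}$ of $D(\lambda)$, into the two singular terms of \eqref{eq:smsing} and collecting the coefficient of $(\log\lambda-\log z-\alpha_{00}^{-1})^{-1}$ gives, after a short computation,
\[
 \widetilde B_{0,-1}f = W\,\langle K_1 D_{00}f,1\rangle,\qquad W=\frac{\log z+\alpha_{00}^{-1}}{2\pi\alpha_{00}}(1-\chi_1)-\alpha_{00}^{-2}\,F_{00}D_{00}w ,
\]
a rank-at-most-one operator. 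By \eqref{eq:niceids}, $PW=0$; and since $\langle K_1D_{00}w,1\rangle=\alpha_{00}\neq 0$, the $R_{00}$-asymptotics in \eqref{eq:R2jk} give $F_{00}D_{00}w(x)=-\tfrac{\alpha_{00}}{2\pi}(\log|x|-\gamma_0)+O(|x|^{-1})$, whence $W\in\mcg_{\log}$ with $c_{\log}(W)=\tfrac1{2\pi\alpha_{00}}\neq0$ and $c_0(W)/c_{\log}(W)=\alpha_{00}^{-1}+\log z-\gamma_0$ --- the $\log z$ appearing only through the cancellation between the $(1-\chi_1)$ term and the $F_{00}D_{00}w$ term. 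I then set $U_{\log}:=2\pi\alpha_{00}\,W$, so that $c_{\log}(U_{\log})=1$, so that \eqref{e:ubasy} holds, so that $\widetilde B_{0,-1}f=\tfrac1{2\pi\alpha_{00}}U_{\log}\langle K_1D_{00}f,1\rangle$, and (taking $f=w$) so that $\widetilde B_{0,-1}\neq0$.

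For uniqueness, note that \eqref{eq:smsingexp2} shows $R(i\varepsilon)\to B_{00}$ as $\varepsilon\to0^+$, since each factor $(\log(i\varepsilon)-\log z-\alpha_{00}^{-1})^{-k}$ and $\varepsilon^{2j}(\log(i\varepsilon))^k$ tends to $0$; hence Lemma~\ref{l:pubd} gives $\mcg_0=\{0\}$, so any element of $\mcg_{\log}$ with $c_{\log}=0$ lies in $\mcg_0=\{0\}$, and $U_{\log}$ is the unique element of $\mcg_{\log}$ with $c_{\log}=1$. To identify $\widetilde B_{0,-1}$ as a tensor product, I would, as in Lemma~\ref{l:A01}, feed \eqref{eq:smsingexp2} into the symmetry relation $\langle R(i|\lambda|)u,v\rangle=\langle u,R(i|\lambda|)v\rangle$ in case (A1) --- valid for small $|\lambda|>0$ by analytic continuation, since $0$ is not an eigenvalue by Lemma~\ref{l:no0eig} --- or into Lemma~\ref{l:resA2} in case (A2); multiplying by $(\log(i|\lambda|)-\log z-\alpha_{00}^{-1})$ and letting $|\lambda|\to0$ shows $\widetilde B_{0,-1}$ is symmetric in case (A1) and satisfies $\widetilde B_{0,-1}^*f=\overline{\widetilde B_{0,-1}\bar f}$ in case (A2). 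Being rank one with range $\Complex U_{\log}$, it must then equal $c\,U_{\log}\otimes U_{\log}$ with $c\in\Real$ in case (A1), or $c\,U_{\log}\otimes\overline{U_{\log}}$ with $c\in\Complex$ in case (A2).

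It remains to show $c=\tfrac1{2\pi}$. Comparing the tensor-product form with $\widetilde B_{0,-1}f=\tfrac1{2\pi\alpha_{00}}U_{\log}\langle K_1D_{00}f,1\rangle$ and evaluating both at $f=w$, using $\langle K_1D_{00}w,1\rangle=\alpha_{00}$, reduces this to showing $\langle w,U_{\log}\rangle=1$ (resp. $\langle w,\overline{U_{\log}}\rangle=1$). Since $w=\tfrac1{2\pi}\Delta\chi_1=\tfrac1{2\pi}[\Delta,\chi_1]\mathbf 1$ and the first-order operator $[\Delta,\chi_1]=2\nabla\chi_1\cdot\nabla+\Delta\chi_1$ is formally skew-adjoint with coefficients supported away from $\mathcal B$, an integration by parts turns this inner product into $-\tfrac1{2\pi}\langle[\Delta,\chi_1]U_{\log},1\rangle$, which by Lemma~\ref{l:Ucommint} applied to $\phi=U_{\log}$ equals $c_{\log}(U_{\log})=1$. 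This gives $c=\tfrac1{2\pi}$, completing the proof.

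I expect the main obstacle to be the first step: correctly extracting $\widetilde B_{0,-1}$ from \eqref{eq:smsing} in the $\alpha_{00}\neq0$ regime, keeping track of how the two singular terms recombine, and verifying that the resulting function lies in $\mcg_{\log}$ with the precise constant $c_0(U_{\log})=\alpha_{00}^{-1}+\log z-\gamma_0$, since the $\log z$ there comes from neither singular term alone. The remaining steps are close variants of computations already carried out in Lemma~\ref{l:A01}.
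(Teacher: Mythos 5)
Your proposal is correct and takes essentially the same approach as the paper. The paper's proof is only an outline deferring to the proof of Lemma~2.6 in \cite{ChDaob} and listing three steps -- first show $\widetilde B_{0,-1}=c\,U_{\log}\otimes U_{\log}$, then $U_{\log}=2\pi\widetilde B_{0,-1}w$, then $\langle w,U_{\log}\rangle_\mch=1$ (with uniqueness via Lemma~\ref{l:pubd}) -- and your proposal carries out exactly those steps: it extracts $\widetilde B_{0,-1}f=W\langle K_1D_{00}f,1\rangle$ with $W=\frac{\log z+\alpha_{00}^{-1}}{2\pi\alpha_{00}}(1-\chi_1)-\alpha_{00}^{-2}F_{00}D_{00}w$ from \eqref{eq:smsing} (correct: the coefficient of $F_{00}D_{00}w$ is $-\alpha_{00}^{-1}(\log z+\alpha_{00}^{-1})+\alpha_{00}^{-1}\log z=-\alpha_{00}^{-2}$), identifies $U_{\log}=2\pi\alpha_{00}W\in\mcg_{\log}$ with the stated $c_{\log}$ and $c_0$, establishes the tensor-product form via the same symmetry argument as Lemma~\ref{l:A01}, and pins down the constant via $\langle w,U_{\log}\rangle=1$. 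The only very minor imprecisions are the phrasing ``multiplying by $(\log(i|\lambda|)-\log z-\alpha_{00}^{-1})$ and letting $|\lambda|\to 0$'' (one should extract the coefficient of $(\log\lambda-\log z-\alpha_{00}^{-1})^{-1}$ rather than literally multiply and pass to the limit, since $B_{00}$ contributes a divergent term otherwise), and the integration by parts producing $-\tfrac1{2\pi}\langle\mathbf 1,[\Delta,\chi_1]U_{\log}\rangle$ rather than $-\tfrac1{2\pi}\langle[\Delta,\chi_1]U_{\log},\mathbf 1\rangle$ (a complex conjugate, harmless here since the value $-2\pi c_{\log}(U_{\log})=-2\pi$ is real). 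Neither affects correctness.
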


\begin{proof}
The proof in case (A1) is the same as that of Lemma 2.6 in \cite{ChDaob}, and we give just the following outline: first show that $B_{0,-1}= c   U_{\log}  \otimes  U_{\log} $, then that $ U_{\log}  = 2\pi \widetilde B_{0,-1} w$, and then that $\langle w,  U_{\log}  \rangle_\mch = 1$. Uniqueness of $U_{\log}$ follows from Lemma \ref{l:pubd}

The proof in case (A2) is only slightly different. We have instead $\widetilde B_{0,-1}= c \,  U_{\log}  \otimes \overline{ U_{\log} }$ (just as in the proof of \eqref{e:a01u} above), while $ U_{\log}  = 2\pi \widetilde B_{0,-1} w$ is unchanged, and finally we show $\langle w, \overline{ U_{\log} } \rangle_\mch = 1$ by the same calculation that gives $\langle w,  U_{\log}  \rangle_\mch = 1$ in case (A1).
\end{proof}

We are now ready to complete the proof of Theorem \ref{t:resexp}.

\begin{proof}[Proof of Theorem \ref{t:resexp}]

If $P$ has a zero resonance, then,  by Lemma \ref{l:pubd}, $R(\lambda)$ cannot have a limit as $\lambda \to 0$ along the positive imaginary axis. Hence  by part 2 of Lemma \ref{l:basicre} it follows that $\alpha_{00} = 0$. The conclusion then follows from part 1 of Lemma \ref{l:basicre} and Lemma \ref{l:A01}.

If $P$ has no zero resonance, then there can be no solution $u_A \in \mathcal D_{\text{loc}}$  to $Pu=0$ with the asymptotic \eqref{e:uaasy}. Hence, by part 1 of Lemma \ref{l:basicre} and Lemma \ref{l:A01}, we must have $\alpha_{00} \ne 0$. The conclusion then follows from part 2 of Lemma \ref{l:basicre} and Lemma \ref{l:B01}.
\end{proof}

It remains to prove Lemmas \ref{l:no0eig} and \ref{l:pubd}. We prove Lemma \ref{l:no0eig} using an intermediate step in the proof of Vodev's identity to reduce to a problem of bounding the free resolvent. See Lemma 5 of \cite{schlag} for a related calculation. If $P$ is self-adjoint, then Lemma~\ref{l:no0eig} can alternatively be deduced from Proposition \ref{p:resexpsa}  below.

\begin{proof}[Proof of Lemma \ref{l:no0eig}]

Let $\chi,\chi_0\in C_c^\infty(\Real)$ be $1$ in a neighborhood of $\mathcal{B}$, with $\supp \chi_0$ contained in the set on which $\chi_1$ is $1$.  % For convenience below we choose $\chi_0$ so that$0\leq \chi_0\leq 1$ and so that it depends only on $|x|$.
We use \eqref{e:rzr0z} with $\lambda =i\kappa$, $\kappa>0$ to write
\begin{equation}\label{eq:split}
 R(i\kappa)= R(i\kappa )\chi_1+ \{ 1-\chi_1-R(i\kappa ) [\Delta,\chi_1]\} R_0(i\kappa )(1-\chi_0).
 \end{equation}
Let $\phi \in \mcg_{-2}$.  Then $R(i\kappa ) \phi =\phi/\kappa^2$.   Applying  \eqref{eq:split} to $\phi $ and multiplying on the left by $\chi$ yields
$$ \kappa^{-2}\chi \phi = \chi R(i\kappa )\chi_1\phi + \chi \{ 1-\chi_1-R(i\kappa ) [\Delta,\chi_1]\} R_0(i\kappa )(1-\chi_0)\phi,$$
or, using our assumption \eqref{e:uhpbd}, 
\begin{equation}\label{eq:dog}
\chi \phi = \kappa^2 \chi \{ 1-\chi_1-R(i\kappa ) [\Delta,\chi_1]\} R_0(i\kappa )(1-\chi_0)\phi +o((\log \kappa)^{-1}).
\end{equation}
It suffices to show that   $\|\tilde{\chi}R_0 (i\kappa) (1-\chi_0)\phi\| = O(\log \kappa)$ as $\kappa \downarrow 0$, for any $\tilde{\chi}\in C_c^\infty(\Real^2)$,
since then by 
\eqref{eq:dog} and \eqref{e:uhpbd} we must have $\chi \phi=o(1)$, or $\phi=0$, proving the lemma.  

Moreover, we can replace $(1-\chi_0)$ with ${\bf 1}_{> \rho}$ for any fixed $\rho>0$, since by the  $R_0$ expansion \eqref{e:r0series} we have $\|\tilde \chi R_0(i\kappa)(1 - \chi_0 -  {\bf 1}_{> \rho})\|_{L^2 \to L^2} = O(\log \kappa)$. Thus, by the resolvent kernel formula \eqref{e:h10ser}, taking $\rho$ large enough that $\tilde \chi$ is supported in $\{x \colon |x| < \rho/2\}$, we see that it is enough to show
\[
 \int_{\{y \colon |y| > \rho\}} H^{(1)}_0(i \kappa|x-y|) \phi(y)\,dy = O(\log \kappa),
\]
uniformly for $x \in \mathbb R^2$ such that $ |x| < \rho/2$. Now write $|x-y| = |y|(1+\varepsilon)$, where $\varepsilon = \varepsilon(x,y)$ obeys $|\varepsilon|\le|x|/|y|<1/2$.   Using  the fact that
\[
 \int_{\{y \colon |y| > \rho\}} H^{(1)}_0(i \kappa|y|) \phi(y)\,dy = 0
\]
since the zero Fourier component of $\phi$ is $0$ for $|y|>\rho$, we 
 see that it is enough to show that
\[
 \int_{\{y \colon |y| > \rho\}} \Big(H^{(1)}_0(i \kappa|y|(1+\varepsilon)) - H^{(1)}_0(i \kappa|y|) \Big) \phi(y)\,dy = O(\log \kappa).
\]
Using $\phi(y) = O(|y|^{-2})$, and passing to polar coordinates, observe that it is enough to prove
\[
%\int_\rho^\infty  \Big|H^{(1)}_0(i \kappa r(1+\varepsilon_1)) - H^{(1)}_0(i \kappa r) \Big|r^{-1}dr = 
 \int_{\rho\kappa}^\infty \Big|H^{(1)}_0(i  s(1+\varepsilon_1)) - H^{(1)}_0(i s) \Big|s^{-1}ds = O(\log \kappa),
\]
provided $|\varepsilon_1|<1/2$.
By the same large argument Bessel function asymptotics as  in the proof of \eqref{e:r0exp}, we have $H_0^{(1)}(is) \lesssim e^{-s}$ as $s \to \infty$, so it is enough to show that there is $\delta>0$ such that 
\[
 \int_{\rho\kappa}^\delta \Big|H^{(1)}_0(i  s(1+\varepsilon_1)) - H^{(1)}_0(i s) \Big|s^{-1}ds = O(\log \kappa).
\]
For that use the fact that, by \eqref{e:h10ser}, $H_0^{(1)}(is) \sim (2i/\pi) \log s$ as $s \to 0$ to write 
\[
  \Big|H^{(1)}_0(i  s(1+\varepsilon_1)) - H^{(1)}_0(i s) \Big| \sim \frac  2 \pi  |\log (1 + \varepsilon_1)| < \frac 2 \pi \log 2.
\]
\end{proof}

\begin{proof}[Proof of Lemma \ref{l:pubd}]We first give the proof for $P$ self-adjoint.
%Write $r = |x|$, 
Let $U \in \mcg_0$. Take $\phi \in \mathcal H_{\text{c}}$,
and take  $r_1$ such that $\phi(x)=0$ when $|x|\ge r_1$. Let %$\chi_0$ be the characteristic function of $\{x \in \mathbb R^2 \colon r \le \rho\}$, and let 
$R(0)= \lim_{\varepsilon \to 0^+}R(i\varepsilon)$.
Then, by Lemma \ref{l:bpid}, 
\begin{equation}\label{e:uphi}
0 =\langle  R(0)\phi,P U \rangle  = \langle \phi, U \rangle  - \bp(R(0)\phi, U)
\end{equation}
Now observe that, by \eqref{eq:lco} with  $\chi_1$ such that $\chi_1 = 1$ near the support of $\phi$,  with  $z \to 0$ along the positive imaginary axis, and substituting the free resolvent series \eqref{e:r0series}, we have that $(1-\chi_1)R(0)\phi$ is given by the limit
\[
\lim_{\varepsilon \to 0^+} \log(i\varepsilon)R_{01}\{1-\chi_1 + [\Delta,\chi_1]R(i\varepsilon)\}\phi + R_{00}  \{1-\chi_1 + [\Delta,\chi_1]R(0)\}\phi.
\]
But the fact that this limit  exists implies that 
\begin{equation}\label{e:1chi1phi0}
\int ((1-\chi_1+[\Delta,\chi_1]R(0)) \phi)(x)dx= 0,
\end{equation}
and  that  
$\lim_{\varepsilon \to 0^+} \log (i\varepsilon) R_{01} \Big(1-\chi_1+[\Delta,\chi_1](R(i\varepsilon)-R(0))\Big) \phi$ exists, yielding 
\begin{equation}\label{e:r00phi1}
\begin{split}
(1-\chi_1)R(0)\phi = \lim_{\varepsilon \to 0^+} \log (i\varepsilon)  &R_{01} [\Delta,\chi_1](R(i\varepsilon)-R(0)) \phi + R_{00}(1-\chi_1+[\Delta,\chi_1]R(0)) \phi .
\end{split}
\end{equation}
%Since $\chi_1 = 1$ near the support of $\phi$ we have
%\[
 %R_{00}K_1\restrict_{z=0} \phi = R_{00}[\Delta,\chi_1]R(0)\phi. 
%\]
Now we use \eqref{e:1chi1phi0}  %Since $\chi_1 = 1$ near the support of $\phi$ we have
%\[
 %R_{00}K_1\restrict_{z=0} \phi = R_{00}[\Delta,\chi_1]R(0)\phi. 
%\]
%Since 
%\[
%\int_{\mathbb R^2}[\Delta,\chi_1]R(0)\phi = \int_{\mathbb R^2}\Big((1-\chi_1)\Delta - \Delta(1-\chi_1) \Big)R(0)\phi = 0-0=0, 
%\]
and insert the $R_{00}$ series \eqref{eq:R2jk} into \eqref{e:r00phi1} to get
\[
(1-\chi_1)R(0)\phi(x) =  c_\phi+ \frac 1 {4\pi}\sum_{m=1}^\infty \frac 1 {m|x|^{2m}} \int_{\mathbb R^2}  (2x\cdot y - |y|^2)^m f(y)\,dy,
\]
with $f= (1-\chi_1+[\Delta,\chi_1]R(0))\phi$
and  $$c_\phi= -\frac{1}{2\pi}\lim_{\varepsilon \to 0^+} \int \Big( \log (i\varepsilon)  [\Delta,\chi_1](R(i\varepsilon)-R(0))\Big) \phi \; dx .$$
In particular, $R(0)\phi$ is harmonic and bounded for large $|x|$. By Lemma \ref{l:bdrypair}, $\bp(R(0)\phi, U) =0$, and hence \eqref{e:uphi} yields $\langle \phi, U \rangle =0$. Since this is true for any $\phi \in \mathcal H_{c}$, it follows that $U=0$.  

If $P$ is not self-adjoint but satisfies the alternate conditions (A2), then instead of (\ref{e:uphi}) we write
$$0=\langle  R(0)\phi,\overline{PU}\rangle = \langle  R(0)\phi, P^* \bar U \rangle = \langle \phi, \bar U  \rangle  - \bp(R(0)\phi,\bar U).$$
The rest of the proof is essentially identical, giving $\langle \phi, \bar U\rangle=0$ for all $\phi \in \mch_{c}$, and hence $U=0$.
\end{proof}

\section{General resolvent expansions}\label{s:fsrb}
Here we begin our study of resolvent expansions without the mild growth hypothesis (\ref{e:uhpbd}).  Corollary \ref{c:nspchar} to these initial results gives us a sufficient condition for (\ref{e:uhpbd})
in terms of the nullspace of $P$:  if $\mcg_{-1} = \{0\}$, i.e. if $P$ has no $p\,$-resonance or eigenvalue at $0$, then \eqref{e:uhpbd} holds. In the self-adjoint case, the converse is also true by Theorem \ref{t:nre}, or more specifically by Lemma \ref{l:pressing}.

We emphasize that the results of this section do not require that $P$ be self-adjoint, only that it satisfies our general black-box hypotheses, including either (A1) or (A2).

\begin{prop} \label{p:rexpbasic}There are $j_0\in \Natural_0$, $ k_0(j)\in \Natural_0$ and $B_{2j,k}:\mch_c\rightarrow \mathcal{D}_{\loc}$ such that 
\begin{equation}\label{eq:expwo}
R(\lambda)=\sum_{j =- j_0}^\infty\sum_{k = -\infty}^{k_0(j)} B_{2j,k} (\log \lambda)^k \lambda^{2j}.
\end{equation}
The series \eqref{eq:expwo} converges absolutely, uniformly on sectors near zero.
Moreover, if $k\not =0$, then $B_{2j,k}$ has finite rank.
\end{prop}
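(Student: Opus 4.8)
The plan is to derive \eqref{eq:expwo} directly from Vodev's identity \eqref{e:vodevid} together with the meromorphic continuation already established at the end of Section~\ref{s:vod}. Recall that \eqref{e:vodevid} reads $R(\lambda)(I-K(\lambda)) = F(\lambda)$, where $F(\lambda)$ has the convergent expansion \eqref{e:fseries}, $K(\lambda) = (\lambda^2 - z^2)\chi_1(2-\chi_1)R(z) - [\Delta,\chi_1](R_0(\lambda)-R_0(z))K_1$, and $K(\lambda)\chi$ is compact $\mch\to\mch$ with $\|K(\lambda)\|_{\mch\to\mch}\to 0$ as $\lambda\to z$. First I would insert the free resolvent series \eqref{e:r0series} into the formula for $K(\lambda)$ to obtain an expansion $K(\lambda) = \sum_{j\ge 0}\sum_{k=0}^{1} K_{2j,k}\lambda^{2j}(\log\lambda)^k$, convergent absolutely and uniformly on sectors near zero as an operator $\mch_c\to\mch_c$, with $K_{00} = I - \chi_1 + [\text{lower order from } z] - [\Delta,\chi_1]R_{00}K_1$ — the key point being that the \emph{only} $\lambda$-independent-of-log part that survives at leading order comes from the free resolvent's constant term, and crucially the operator $K(\lambda) - K_{01}\log\lambda - K_{00}$ is $O(\lambda^2\log\lambda)$ with finite-rank $(\log\lambda)$-coefficients, since $R_{2j,1}$ has finite rank for all $j$.

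Next I would invert $I-K(\lambda)$. Fix $\chi\in C_c^\infty(\mathbb R^2)$ equal to $1$ near $\supp\chi_1$, so $K(\lambda) = K(\lambda)\chi$ on the relevant range and $\chi R(\lambda)\chi = \chi F(\lambda)\chi(I-K(\lambda)\chi)^{-1}$. The operator $I - K(\lambda)\chi$ is a holomorphic family of Fredholm operators of index zero on a punctured neighborhood of $\lambda = 0$ in $\Lambda$ (after a change of variable $\mu = \log\lambda$ it is holomorphic in $(\mu,\lambda^2)$ near $(\mu,0)$ for $\re\mu$ large negative, or one may work directly on $\Lambda$). By the analytic Fredholm theorem applied on $\Lambda$, $(I - K(\lambda)\chi)^{-1}$ is meromorphic there; its poles are isolated, so near $\lambda = 0$ there are at most finitely many, and by shrinking the sector we may assume $(I-K(\lambda)\chi)^{-1}$ is holomorphic for $0 < |\lambda| \le \lambda_0$, $|\arg\lambda|\le\varphi_0$, except possibly at $\lambda = 0$ itself. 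The structure of the singularity at $\lambda = 0$ is governed by the finite-dimensional part: writing $I - K(\lambda)\chi = (I - K_{00}\chi) - (K_{01}\chi)\log\lambda - (K(\lambda) - K_{00} - K_{01}\log\lambda)\chi$ and using that $I - K_{00}\chi$ is Fredholm of index zero with finite-dimensional kernel and cokernel, a Grushin/Schur-complement reduction (as in the argument behind Proposition~\ref{p:rexpbasic}, compare \cite{MuSt}) replaces the inversion of $I - K(\lambda)\chi$ by the inversion of a finite matrix $E(\lambda)$ whose entries are convergent series in $\lambda^2$ and $\log\lambda$. The determinant of $E(\lambda)$ is then a convergent series in $\lambda^2$ and $\log\lambda$, and by the Weierstrass preparation theorem (in the variable $\lambda^2$, with coefficients polynomial in $\log\lambda$ — or more elementarily, by factoring out the lowest-order term) its reciprocal expands as $\sum_{j\ge -j_0}\sum_{k\le k_0(j)} c_{2j,k}(\log\lambda)^k\lambda^{2j}$ with the negative powers of $\lambda$ coming in finitely many, and for each fixed $j$ the powers of $\log\lambda$ bounded above by some $k_0(j)$ but unbounded below.

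Assembling the pieces: $R(\lambda) = F(\lambda)(I - K(\lambda))^{-1}$ (valid as operators $\mch_c\to\mathcal D_{\loc}$ once one localizes with $\chi$ and uses \eqref{e:vodevid}, \eqref{e:rexp}), and substituting the expansion of $F(\lambda)$ from \eqref{e:fseries} and the expansion of $(I-K(\lambda))^{-1}$ obtained above — both absolutely convergent, uniformly on sectors near zero — and multiplying the two series gives \eqref{eq:expwo} with the stated index ranges; absolute uniform convergence on sectors is preserved under the product of two such series. Finally, for the finite-rank claim when $k\ne 0$: by \eqref{eq:niceids}-type reasoning, or more directly, every $(\log\lambda)^k$ with $k\ne 0$ in the expansion of $(I-K(\lambda))^{-1}$ is accounted for by the finite-rank coefficients $R_{2j,1}$ feeding into $K$ and into the finite matrix $E(\lambda)$; tracking this through the product with $F(\lambda)$, whose own $(\log\lambda)$-coefficients $F_{2j,1}$ are finite rank, shows $B_{2j,k}$ is a finite sum of compositions at least one of which has finite rank, hence $B_{2j,k}$ has finite rank. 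I expect the main obstacle to be the bookkeeping in the Grushin reduction: making precise that the finite matrix $E(\lambda)$ has entries that are \emph{convergent} (not merely formal) series in $\lambda^2,\log\lambda$ on a genuine sector, and that inverting it introduces only the claimed two-sided-in-$k$, one-sided-in-$j$ pattern of exponents, together with keeping careful track of which coefficients are finite rank; the convergence statements will require the uniform-on-sectors estimates for $R_0(\lambda)$ and $F(\lambda)$ from \eqref{e:r0series} and \eqref{e:fseries}, and a compactness/Neumann-series argument to control $(I - K(\lambda)\chi)^{-1}$ away from $\lambda = 0$.
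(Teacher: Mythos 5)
Your proposal follows the same broad route as the paper's proof: Vodev's identity to reduce to inverting $I-K(\lambda)$, observe that $K(\lambda)$ has the Hahn-holomorphic form $H_2(\lambda)+\log\lambda\, H_1(\lambda)$ with $H_1$ having finite-rank derivatives at $0$, and then a finite-dimensional reduction in the spirit of \cite{MuSt}. The paper, however, avoids localizing $R(\lambda)$ with $\chi$: it preconditions with a cutoff $\chi_2$ equal to $1$ on $\supp\chi_1$ and uses the exact factorization $I-K(\lambda)=(I-K(\lambda)(1-\chi_2))(I-K(\lambda)\chi_2)$ (the first factor inverting trivially since $(1-\chi_2)K(\lambda)=0$) to get a global identity $R(\lambda)=F(\lambda)(I-K(\lambda)\chi_2)^{-1}(I+K(\lambda)(1-\chi_2))$; it then splits $K(\lambda)\chi_2=A_1(\lambda)+A_2+K_2(\lambda)$ with $A_1,A_2$ finite rank, $\|K_2(\lambda)\|\le 1/2$ on a small sector, inverts $I-K_2(\lambda)$ by Neumann series, and disposes of the remaining finite-rank perturbation $I-A_3(\lambda)$ by a Cramer's-rule-type argument. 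Your Grushin/Schur-complement plus Weierstrass-preparation version is a legitimate alternative framing of the same finite-dimensional step and would give the same exponent pattern. The one place where your sketch is genuinely incomplete rather than merely abbreviated is the step ``by shrinking the sector we may assume $(I-K(\lambda)\chi)^{-1}$ is holomorphic for $0<|\lambda|\le\lambda_0$.'' The ordinary analytic Fredholm theorem on the punctured sector gives only that poles are isolated there; it does not prevent them from accumulating at $\lambda=0$, which is a boundary point, not an interior point, of the domain. Ruling this out is exactly what the Hahn-holomorphic structure buys you: the Neumann-series bound $\|K_2(\lambda)\|\le 1/2$ on a fixed small sector makes the infinite-dimensional part uniformly invertible there, and the finite-rank remainder's determinant is a convergent Hahn series whose zero set in a small enough sector is either empty or all of the sector. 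You flag this as an ``expected obstacle'' at the end, but it is really the heart of the matter and should be addressed head-on rather than deferred. (Also, a small slip: your formula for $K_{00}$ contains a spurious $I-\chi_1$ term --- that appears in $K_1$ and in $F(\lambda)$, not in $K(\lambda)$ --- though this does not affect the argument.)
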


\begin{proof}
We use Vodev's identity $R(\lambda)(I-K(\lambda))=F(\lambda)$ from \eqref{e:vodevid}.  
First note that if we choose $\chi_2\in C_c^\infty(\Real^2)$ to be $1$ on the support of $\chi_1$, then $(I-K(\lambda)(1-\chi_2))^{-1}=I+K(\lambda)(1-\chi_2)$, since $(1-\chi_2)K(\lambda)=0$.  Using $I-K(\lambda)=(I-K(\lambda)(1-\chi_2))(I-K(\lambda)\chi_2)$
yields
$$R(\lambda)=F(\lambda) (I-K(\lambda)\chi_2)^{-1}(I+K(\lambda)(1-\chi_2)),$$
so that it 
remains to understand the inverse of $I-K(\lambda)\chi_2$. 
Note that, by Section \ref{s:freeres}, near $\lambda=0$ the free resolvent $R_0(\lambda)$  can be written in the form 
\begin{equation}\label{e:h1h2}
 (\log \lambda) H_1(\lambda) +H_2(\lambda), \quad \begin{array}{l}  H_1, \ H_2, \text{ holomorphic and even in }\lambda, \\ \text{and } \partial_\lambda^{m} H_1(0) \text{ finite rank for all }m. \end{array}\end{equation}
Hence each of $K(\lambda)$ and $F(\lambda)$ can be written in the form \eqref{e:h1h2} as well.
Since $K(\lambda)\chi_2$ is a compact operator  and $K(z)=0$, \eqref{eq:expwo} follows from 
the Hahn-holomorphic Fredholm theorem \cite[Theorem 4.1]{MuSt}.  

For the reader's convenience we give the argument, which is 
very similar to that used in the proof of the usual analytic Fredholm theorem, e.g. \cite[Theorem VI.14]{RS}. 
Set 
$$A_1(\lambda):= (1/2\pi)(\log \lambda -\log z)(\Delta\chi_1\otimes 1)K_1\chi_2.$$
 Since $\lim_{\lambda \to 0} K(\lambda)\chi_2 - A_1(\lambda)$ is compact,  we can find $\lambda_0>0$ small enough, a finite rank operator $A_2$, and a 
compact operator $K_2(\lambda)$, such that 
$$K(\lambda)\chi_2=A_1(\lambda)+A_2+K_2(\lambda), \qquad \text{for $\lambda\in \Lambda$, $|\arg \lambda|<\varphi_0,\; |\lambda|<\lambda_0$},$$
and $\|K_2(\lambda)\|\leq 1/2$ in this region.
Moreover, $K_2(\lambda)$ can be written in the form \eqref{e:h1h2}. Then $I-K_2(\lambda)$ is invertible with bounded 
inverse in this region, and the inverse can be written
$$(I-K_2(\lambda))^{-1}=\sum_{m=1}^\infty (K_2(\lambda))^m=\sum_{j\geq 0}\sum_{0\leq k \leq j} D_{2j,k}(\log \lambda)^k \lambda^{2j}$$
for some operators $D_{2j,k}:\mch_c\rightarrow \mch_{\loc}$, which have finite rank when $k>0$.

We conclude that $I-K(\lambda)\chi_2= (I-A_3(\lambda))(I-K_2(\lambda))$, where
$A_3(\lambda)= \left(A_1(\lambda)+A_2\right))(I-K_2(\lambda))^{-1}$.  Since $A_1(\lambda)$ and $A_2$ are each finite rank, $A_3(\lambda)$ is of finite rank and
$I-A_3(\lambda)$ can be inverted essentially by Cramer's rule, resulting in an operator of the form $I+G(\lambda)$, where $G(\lambda)$ is of finite rank and has an expansion of the 
type on the right hand side of \eqref{eq:expwo}.   The claim about the finite rank of $B_{2j,k}$ for $k\not =0$ follows from how a nonzero power of $\log \lambda$ can arise
during the construction: as a coefficient of $\log \lambda \;\lambda^{2j'}$ in the expansions of $F$, $K$, or $K_2$,
each of which is of finite rank, or in inverting $I-A_3(\lambda)$, which differs from the identity by a finite rank operator.
\end{proof}

\noindent\textbf{Remark.}  The main ingredients of the proof of this proposition are Vodev's identity and a variant of the analytic Fredholm theorem. 
This means that the proof of Proposition \ref{p:rexpbasic} does not need the full strength of assumptions (A1) or (A2).   It does, however, require
that $K(\lambda)$ is a compact operator for some $\lambda$ with $0<\arg \lambda<\pi$.  For this, it suffices if $u \mapsto ((P+z_0)^{-1}u)|_{\mathcal B}$ is compact for some $z_0$ with $\im z_0\not = 0.$  Hence the results of Proposition \ref{p:rexpbasic} hold for a larger class of 
non-selfadjoint operators, including the case of a finite set of subwavelength resonators as in \cite{ADH}.

\begin{lem}\label{l:wsing} With the assumptions and notation of Proposition \ref{p:rexpbasic}, if $j_0>0$ and $\phi \in \mch_c$, then $B_{-2j_0,k} \phi \in \mcg_{-1}$ for all $k\in\Integers$ with $k\leq k_0(-j_0)$.
\end{lem}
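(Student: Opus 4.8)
The plan is to fix $\phi\in\mch_c$ and show that, for every $k\le k_0(-j_0)$, the function $u_k:=B_{-2j_0,k}\phi$ — which already lies in $\mathcal D_{\loc}$ since $R(\lambda)$ maps $\mch_c\to\mathcal D_{\loc}$ — satisfies $Pu_k=0$ and $u_k(x)=O(|x|^{-1})$ as $|x|\to\infty$; by the definition \eqref{e:gldef} this gives $u_k\in\mcg_{-1}$. The first property is routine: for $\im\lambda>0$ one has $(P-\lambda^2)R(\lambda)=I$, and since $P\colon\mathcal D_{\loc}\to\mathcal H_{\loc}$ is continuous one can apply it term by term to \eqref{eq:expwo} (legitimate by the uniform convergence on sectors) and match the coefficient of $\lambda^{2j}(\log\lambda)^k$ to get $PB_{2j,k}=B_{2j-2,k}$ for $(j,k)\ne(0,0)$. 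Taking $j=-j_0$ and using that \eqref{eq:expwo} contains no term with $j<-j_0$, so that $B_{-2j_0-2,k}=0$, yields $Pu_k=0$.

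For the decay I would first use Vodev's identity in the form \eqref{eq:lco} with $z$ replaced by $\lambda$ and continued to $\Lambda$, namely $(1-\chi_1)R(\lambda)\phi=R_0(\lambda)\psi(\lambda)$ with $\psi(\lambda)=\{1-\chi_1+[\Delta,\chi_1]R(\lambda)\}\phi$, a family in $\mch_c$ supported in a fixed compact subset of $\Real^2\setminus\mathcal B$. Expanding $\psi(\lambda)=\sum_{j\ge-j_0}\sum_k\psi_{2j,k}\lambda^{2j}(\log\lambda)^k$, one has $\psi_{-2j_0,k}=[\Delta,\chi_1]u_k$ (here $(-j_0,k)\ne(0,0)$ because $j_0>0$). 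Since the series \eqref{e:r0series} for $R_0$ starts at $j=0$, forming the Cauchy product and collecting the $\lambda^{-2j_0}(\log\lambda)^k$ coefficient gives, for $|x|$ large (so that $1-\chi_1=1$),
\[
u_k=R_{01}\psi_{-2j_0,k-1}+R_{00}\psi_{-2j_0,k}.
\]
By the kernel formulas \eqref{eq:R2jk}, $R_{01}g=-\tfrac1{2\pi}\int g$ and $R_{00}g(x)=-\tfrac1{2\pi}(\log|x|-\gamma_0)\int g+O(|x|^{-1})$ for $g$ with fixed compact support, the $O(|x|^{-1})$ being genuinely $O(|x|^{-1})$ precisely when $\int g=0$. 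Hence $u_k=O(|x|^{-1})$ will follow once $\int\psi_{-2j_0,m}=0$ is established for every $m$.

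To prove that, I would pair $[\Delta,\chi_1]R(\lambda)\phi$ with the constant function $1$ via Green's formula in the form of Lemma \ref{l:sGreens} (with $v\equiv1$), together with the boundary identity $\bp_{r_1}(R(\lambda)\phi,1)=-\lambda^2\int_{|x|>r_1}R(\lambda)\phi$ (obtained by integrating $(-\Delta-\lambda^2)R(\lambda)\phi=0$ over $r_1<|x|<\rho$ and letting $\rho\to\infty$, using the exponential decay \eqref{e:rexp}). Simplifying via \eqref{eq:ipr1} and $PR(\lambda)\phi=\lambda^2R(\lambda)\phi+\phi$, this should collapse to
\[
\langle[\Delta,\chi_1]R(\lambda)\phi,1\rangle=-\lambda^2\int_{\Real^2}(1-\chi_1)R(\lambda)\phi-\int_{\Real^2}(1-\chi_1)\phi .
\]
Matching coefficients of $\lambda^{-2j_0}(\log\lambda)^m$: the left side gives $\langle\psi_{-2j_0,m},1\rangle=\int\psi_{-2j_0,m}$, while on the right the last term contributes nothing since $j_0>0$, and $-\lambda^2\int(1-\chi_1)R(\lambda)\phi=-\sum_{j\ge-j_0,\,k}\big(\int(1-\chi_1)B_{2j,k}\phi\big)\lambda^{2j+2}(\log\lambda)^k$ has lowest power $\lambda^{-2j_0+2}$, so its $\lambda^{-2j_0}$-coefficient is $-\int(1-\chi_1)B_{-2j_0-2,m}\phi=0$. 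Thus $\int\psi_{-2j_0,m}=0$ for all $m$, which with the previous paragraph gives $u_k=O(|x|^{-1})$, and the lemma follows since $\phi$ was arbitrary (and $B_{-2j_0,k}=0$ for $k>k_0(-j_0)$).

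I expect the main obstacle to be this last step: seeing that Green's formula rewrites $\langle[\Delta,\chi_1]R(\lambda)\phi,1\rangle$ as $\lambda^2$ times a quantity that, by \eqref{eq:expwo}, is no more singular than $\lambda^{-2j_0}$ (plus a term of order $\lambda^0$), which is exactly what forces the would-be leading integrals of $\psi$ — and hence the constant and $\log|x|$ terms of $u_k$ — to vanish. Everything else is bookkeeping: justifying the term-by-term manipulations of the Hahn-type expansions, and checking that the pairings against $1$ and the boundary terms are meaningful in the black-box formalism, which is where \eqref{e:rexp} and \eqref{eq:ipr1} are used.
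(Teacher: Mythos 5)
Your setup is sound and matches the paper's up to the key reduction: you get $Pu_k=0$ from the coefficients of $(P-\lambda^2)R(\lambda)=I$, and the identity $u_k=R_{01}[\Delta,\chi_1]u_{k-1}+R_{00}[\Delta,\chi_1]u_k$ for large $|x|$ (the paper's \eqref{eq:sworst}), so that everything hinges on showing $\int[\Delta,\chi_1]u_m=0$ for every $m\le k_0(-j_0)$. But your proof of those vanishings has a genuine gap. The term-by-term integration $-\lambda^2\int(1-\chi_1)R(\lambda)\phi=-\sum_{j,k}\big(\int(1-\chi_1)B_{2j,k}\phi\big)\lambda^{2j+2}(\log\lambda)^k$ is not legitimate: the series \eqref{eq:expwo} converges only as operators $\mch_c\to\mathcal D_{\loc}$, i.e.\ locally uniformly in $x$, and the individual coefficients are not integrable at infinity (even the functions you are trying to place in $\mcg_{-1}$ decay only like $|x|^{-1}$, which is not integrable on $\Real^2$, and $B_{00}\phi$ can grow like $|x|$). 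Already in the free case the discrepancy is fatal: $\int(1-\chi_1)R_0(\lambda)\phi=-\lambda^{-2}\int\phi+\cdots$ is two powers of $\lambda$ more singular than any naive term-by-term integration suggests. In fact your Green's-formula identity is correct but circular: it says precisely that the coefficients of $\lambda^{-2j_0}(\log\lambda)^m$ in $\langle[\Delta,\chi_1]R(\lambda)\phi,1\rangle$ --- the quantities you want to kill --- equal (minus) the coefficients of $\lambda^{-2j_0-2}(\log\lambda)^m$ of $\int(1-\chi_1)R(\lambda)\phi$, on which you have no independent control.

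The paper obtains the vanishing only at the top logarithmic order and then descends by induction using a structural input your argument bypasses. With $k^\sharp=k_0(-j_0)$, the coefficient of $z^{-2j_0}(\log z)^{k^\sharp+1}$ in \eqref{eq:lco} is absent on the left but equals $R_{01}[\Delta,\chi_1]u_{k^\sharp}$ on the right; this ``overflow'' coefficient gives $\int[\Delta,\chi_1]u_{k^\sharp}=0$ for free. For lower $k$ one then finds $u_{k^\sharp}=c_{\log}(u_{k^\sharp-1})+O(|x|^{-1})$ via Lemma \ref{l:Ucommint}, and if $c_{\log}(u_{k^\sharp-1})\ne0$ one would have simultaneously $u_{k^\sharp}\in\mcg_0\setminus\mcg_{-1}$ and $u_{k^\sharp-1}\in\mcg_{\log}\setminus\mcg_0$, which Corollary \ref{c:g0glog} forbids; hence $c_{\log}(u_{k^\sharp-1})=0$, and one iterates downward. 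That dichotomy rests on the boundary-pairing argument of Lemma \ref{l:F'pairing}, i.e.\ on the (A1)/(A2) structure; some input of this kind is unavoidable, as the conclusion is not a formal consequence of \eqref{eq:lco} alone.
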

\begin{proof}
To simplify notation,  let  $k^\sharp=k_0(-j_0)$, and for each $k \le k^\sharp$ let $\psi_k = B_{-2j_0,k}\phi$. Since $P\psi_k=0$ follows from the coefficient of $\lambda^{-2j_0}(\log \lambda)^{-k}$ in $(P-\lambda^2)R(\lambda)\phi=\phi$, it is enough to show that, for all $k \le k^\sharp$, we have
\begin{equation}\label{e:b2j0ks}
\psi_k(x)=O(|x|^{-1}), \qquad \text{as } |x| \to \infty.
\end{equation} 
From the coefficient of $z^{-2j_0}(\log z)^{k^\sharp+1}$ in \eqref{eq:lco}, and using $R_{01} = - \frac 1 {2\pi} 1 \otimes 1$, we obtain 
\begin{equation}\label{eq:fworst}
\langle [\Delta,\chi_1]\psi_{k^\sharp},1\rangle=0,
\end{equation}
and from the coefficient of $z^{-2j_0}(\log z)^{k}$ with $k\leq k^\sharp$ we obtain
\begin{equation}\label{eq:sworst}
\psi_k= R_{00}[\Delta,\chi_1]\psi_k + R_{01} [\Delta,\chi_1]\psi_{k-1} +O(|x|^{-N}) = O(\log|x|).
\end{equation}
Plugging  the formula (\ref{eq:R2jk}) for $R_{00}$ and \eqref{eq:fworst} into \eqref{eq:sworst} with $k=k^\sharp$, and applying Lemma~\ref{l:Ucommint}, yields 
\begin{equation}\label{eq:tworst} \psi_{k^\sharp}(x)= c_{\log}(\psi_{k^\sharp-1}) +O(|x|^{-1}).
\end{equation}
By Corollary \ref{c:g0glog}, we cannot have both $\psi_{k^\sharp} \in \mcg_0\setminus\mcg_{-1}$ and $\psi_{k^\sharp-1} \in \mcg_{\log}\setminus\mcg_0$. Hence $ c_{\log}(\psi_{k^\sharp-1}) = 0$, and we obtain 
$\psi_{k^\sharp}(x)=O(|x|^{-1})$ and $\psi_{k^\sharp-1}(x)=O(1).$

An inductive argument which repeats the steps above proves \eqref{e:b2j0ks} for all $k\leq k^\sharp$.
\end{proof}

Lemma \ref{l:wsing} has as an immediate corollary:
\begin{cor} \label{c:nspchar} Suppose $P$ satisfies the black-box hypotheses, including either (A1) or (A2).  If $\mcg_{-1} = \{0\}$, then the resolvent estimate
(\ref{e:uhpbd}) holds.
\end{cor}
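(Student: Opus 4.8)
The plan is to combine the raw expansion of Proposition \ref{p:rexpbasic} with Lemma \ref{l:wsing}: the latter shows that, under the hypothesis $\mcg_{-1} = \{0\}$, the expansion \eqref{eq:expwo} cannot contain any negative power of $\lambda$, and once that is known the estimate \eqref{e:uhpbd} follows at once.

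First I would apply Proposition \ref{p:rexpbasic} to write $R(\lambda) = \sum_{j=-j_0}^\infty \sum_{k\le k_0(j)} B_{2j,k}(\log\lambda)^k\lambda^{2j}$, converging absolutely and uniformly on sectors near zero, and I would take $j_0$ as small as possible: then either $j_0 = 0$, or there exist $\phi\in\mch_c$ and $k\le k_0(-j_0)$ with $B_{-2j_0,k}\phi\neq 0$, since otherwise the $j=-j_0$ term vanishes and $j_0-1$ would also work. Assume $j_0\ge 1$ for contradiction. Lemma \ref{l:wsing} then gives $B_{-2j_0,k}\phi\in\mcg_{-1}$ for every $\phi\in\mch_c$ and every $k\le k_0(-j_0)$; since $\mcg_{-1}=\{0\}$ by hypothesis, every $B_{-2j_0,k}$ vanishes, contradicting the minimality of $j_0$. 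Hence $j_0=0$, so only nonnegative powers of $\lambda^2$ survive in \eqref{eq:expwo}.

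To finish, I would fix $\varphi_0>\pi/2$ and a cutoff $\chi\in C_c^\infty(\Real^2)$ that is $1$ near $\mathcal B$ (hence $1$ wherever $P\neq-\Delta$), multiply the expansion by $\chi$ on both sides, and split it as $\chi R(\lambda)\chi = \sum_{k\le k_0(0)}\chi B_{0,k}\chi\,(\log\lambda)^k + \sum_{j\ge 1}\sum_{k\le k_0(j)}\chi B_{2j,k}\chi\,(\log\lambda)^k\lambda^{2j}$. Setting $\lambda=i\kappa$ and letting $\kappa\to 0^+$: in the first block the terms with $k<0$ tend to $0$ and the whole block is $O(|\log\kappa|^{k_0(0)})$, while the second block is $o(1)$ because, by the uniform absolute convergence, its norm is dominated by a uniformly convergent series of terms each carrying a positive power of $|\lambda|$. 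Thus $\|\chi R(i\kappa)\chi\|_{\mch\to\mathcal D}=O(|\log\kappa|^{k_0(0)})$, and since $\kappa^{-2}|\log\kappa|^{-k_0(0)-1}\to\infty$ this is $o(\kappa^{-2}(\log\kappa)^{-1})$, which is exactly \eqref{e:uhpbd}.

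I do not expect a real obstacle here, which is why the paper records this as an immediate corollary of Lemma \ref{l:wsing}. The only points needing a moment's care are the bookkeeping that lets one choose $j_0$ minimal (so that Lemma \ref{l:wsing} produces a genuine contradiction rather than a vacuous statement), and the routine check that multiplying \eqref{eq:expwo} by a compactly supported cutoff which is constant near $\mathcal B$ yields operators bounded $\mch\to\mathcal D$ for which the convergence is still uniform on sectors, so that the $\mch\to\mathcal D$ norm appearing in \eqref{e:uhpbd} is indeed the quantity being estimated.
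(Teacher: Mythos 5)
Your proof is correct and is essentially the paper's intended argument: the paper presents this corollary as following immediately from Lemma \ref{l:wsing}, and your write-up simply spells out the two steps, namely using $\mcg_{-1}=\{0\}$ together with Lemma \ref{l:wsing} to force $j_0=0$ in the expansion \eqref{eq:expwo}, and then reading off the $o(\kappa^{-2}(\log\kappa)^{-1})$ bound from the surviving series.
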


\section{Resolvent expansions without mild growth} \label{s:renobd}
In this section we study the more challenging problem of understanding the expansion of $R(z)$ near $0$ without the mild growth assumption \eqref{e:uhpbd}.
  In order to make this more manageable,
we shall assume throughout this section that $P$ satisfies hypothesis (A1); in particular, $P$ is self-adjoint.

\begin{prop}\label{p:resexpsa}
When $P$ is self-adjoint, \eqref{eq:expwo} can be refined to 
\begin{equation}\label{eq:reb}
R(\lambda)= \sum_{k=-\infty}^0 B_{-2,k} (\log \lambda)^k \lambda^{-2} + \sum_{j=0}^\infty \sum_{k=-\infty}^{k_0(j)} B_{2j,k} (\log \lambda)^k \lambda^{2j}, \ B_{-2,0} = -\Pe.
\end{equation}
\end{prop}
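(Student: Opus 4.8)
The plan is to start from the general expansion \eqref{eq:expwo} furnished by Proposition \ref{p:rexpbasic} and show that self-adjointness forces two improvements: first, that the most singular power of $\lambda$ is $\lambda^{-2}$ (i.e.\ $j_0\le 1$), with the coefficients of $\lambda^{-2j}$ for $j\ge 2$ all vanishing; and second, that for the $\lambda^{-2}$ term only nonpositive powers of $\log\lambda$ appear, with the purely $\lambda^{-2}$ coefficient being exactly $-\Pe$. The key analytic input is that for $P=P^*$ and $\lambda=i\kappa$ with $\kappa>0$, $R(i\kappa)$ is a bounded self-adjoint operator with $\|\chi R(i\kappa)\chi\|_{\mch\to\mch}\le \kappa^{-2}$ (spectral theorem, since $\operatorname{spec}(P)\subset[-M',\infty)$ and $P\ge 0$ in all the self-adjoint examples here, or more precisely $\operatorname{dist}(-\kappa^2,\operatorname{spec} P)\ge\kappa^2$). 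This single inequality controls the blow-up rate, and the self-adjointness symmetry $\langle R(i\kappa)u,v\rangle=\langle u,R(i\kappa)v\rangle$ pins down the coefficients.

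First I would feed the a priori bound $\|\chi R(i\kappa)\chi\|=O(\kappa^{-2})$ into \eqref{eq:expwo}: comparing growth rates as $\kappa\downarrow 0$ immediately kills every term $B_{2j,k}\lambda^{2j}(\log\lambda)^k$ with $j<-1$, and for $j=-1$ kills every term with $k>0$, leaving exactly $\sum_{k\le 0}B_{-2,k}(\log\lambda)^k\lambda^{-2}+\sum_{j\ge 0}\sum_{k\le k_0(j)}B_{2j,k}(\log\lambda)^k\lambda^{2j}$, which is \eqref{eq:reb} except for the identification $B_{-2,0}=-\Pe$. Here one must be slightly careful to justify that $\|\chi T_{2j,k}(\lambda)\chi\|$ for distinct monomials $\lambda^{2j}(\log\lambda)^k$ are asymptotically independent along the ray $\lambda=i\kappa$, so that an $O(\kappa^{-2})$ bound on the sum forces term-by-term constraints; this is the standard argument that $\{\kappa^{2j}(\log\kappa+i\pi/2)^k\}$ are linearly independent as functions of small $\kappa>0$ together with the absolute, uniform convergence of the series from Proposition \ref{p:rexpbasic}.

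Next I would identify $B_{-2,0}$. Multiply the expansion \eqref{eq:reb} by $\kappa^2$ and let $\kappa\downarrow 0$: the terms with negative powers of $\log\lambda$ vanish (since $(\log(i\kappa))^{-k}\to 0$), the terms with $j\ge 0$ vanish, and one is left with $\lim_{\kappa\to 0}\kappa^2\chi R(i\kappa)\chi = \chi B_{-2,0}\chi$ in operator norm on $\mch$. On the other hand, the spectral theorem gives $\lim_{\kappa\to 0}\kappa^2 R(i\kappa)=\lim_{\kappa\to 0}\kappa^2(P+\kappa^2)^{-1}=-\mathbf 1_{\{0\}}(P)=-\Pe$ strongly, hence (after cutting off, and using that $\Pe$ is finite rank by the black-box compactness assumption, so strong and norm limits on $\chi(\cdot)\chi$ agree) $\chi B_{-2,0}\chi=-\chi\Pe\chi$ for all such $\chi$; since $\Pe$ has range in $\mch_c$ and kernel supported near $\mathcal B$ modulo the free exterior, this upgrades to $B_{-2,0}=-\Pe$ as operators $\mch_c\to\mathcal D_{\loc}$.

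The main obstacle is the rigorous passage from the norm bound on the full series to the termwise vanishing and identification — i.e.\ making precise that the monomials $\lambda^{2j}(\log\lambda)^k$ are ``asymptotically orthogonal'' along the imaginary axis and that the tail of the convergent series is genuinely negligible at each order. I would handle this by the usual device: restrict to $\lambda=i\kappa$, divide by the putative leading monomial, and take iterated limits (first $\kappa\to 0$ to isolate the most singular power of $\kappa$, then divide by $(\log\kappa)^{\text{top}}$ and repeat), using absolute uniform convergence on the sector to interchange limit and sum. A secondary, easier point is confirming the operator-norm (as opposed to merely strong) convergence $\kappa^2\chi R(i\kappa)\chi\to-\chi\Pe\chi$, which follows because $\Pe$ has finite rank and $\kappa^2(P+\kappa^2)^{-1}$ converges strongly with uniformly bounded norm, so convergence is automatically in norm on the finite-rank spectral subspace and the complementary part is $O(\kappa^2/\delta)$ where $\delta=\operatorname{dist}(0,\operatorname{spec}(P)\setminus\{0\})$ if $0$ is isolated, or handled by an $\varepsilon$-approximation of the spectral projection otherwise.
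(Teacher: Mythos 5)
Your proposal is correct in outline and reaches the right conclusion, but your route to $B_{-2,0}=-\Pe$ is genuinely different from the paper's, and there are two technical points you should tighten.

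For the first step (killing all $j<-1$ terms and the $j=-1$, $k>0$ terms), the paper works along the ray $\lambda=e^{i\pi/4}r$, where $\lambda^2=ir^2$ is purely imaginary, so $\|R(e^{i\pi/4}r)\|_{\mch\to\mch}\le r^{-2}$ holds for \emph{all} $r>0$ directly by self-adjointness, without any discussion of the spectrum. You instead use $\lambda=i\kappa$, where $\lambda^2=-\kappa^2$ is real and negative; then $\operatorname{dist}(-\kappa^2,\operatorname{spec}P)\ge\kappa^2$ is \emph{not} automatic, since the black-box class permits negative eigenvalues (your parenthetical remark that $P\ge 0$ in all the examples is not part of the hypotheses). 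What saves you is that the convergence of the series from Proposition \ref{p:rexpbasic} in a sector containing $\arg\lambda=\pi/2$ already precludes eigenvalues in $(-\lambda_0^2,0)$, so for $\kappa$ small enough the distance is exactly $\kappa^2$. This works, but it is slightly circular-feeling; the $e^{i\pi/4}$ ray is cleaner and you may as well use it. The comparison-of-growth-rates step itself (termwise constraints from absolute, uniform convergence plus distinct asymptotic orders of the monomials $\kappa^{2j}(\log\kappa)^k$) is fine and matches the paper's intent.

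For the identification $B_{-2,0}=-\Pe$, the paper argues algebraically: the $\lambda^{-2}$ and $\lambda^0(\log\lambda)^0$ coefficients of $(P-\lambda^2)R(\lambda)=I$ give $PB_{-2,0}=0$ and $PB_{00}-B_{-2,0}=I$, and then a boundary-pairing computation shows $\Pe P B_{00}=0$, whence $\Pe B_{-2,0}=-\Pe$ and $\Pe B_{-2,0}=B_{-2,0}$. You instead appeal to the spectral theorem, computing $\kappa^2 R(i\kappa)\to\Pe$ strongly and matching against the norm-convergent expansion. This is a legitimate and in some ways simpler alternative, but be careful with two signs: (i) $\kappa^2(P+\kappa^2)^{-1}\to +\mathbf{1}_{\{0\}}(P)=+\Pe$, not $-\Pe$; and (ii) since $\lambda^{-2}=(i\kappa)^{-2}=-\kappa^{-2}$, the expansion gives $\kappa^2\chi R(i\kappa)\chi\to -\chi B_{-2,0}\chi$, not $+\chi B_{-2,0}\chi$. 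The two errors cancel, but as written the justification is wrong twice. Also, your norm-versus-strong-convergence discussion is more complicated than it needs to be: you don't need $0$ to be isolated in the spectrum or any $\varepsilon$-approximation of the spectral projection (indeed $0$ is \emph{not} isolated, since the essential spectrum is $[0,\infty)$, so that part of your argument would fail as stated). The clean point is simply that $\kappa^2\chi R(i\kappa)\chi$ converges in operator norm by Proposition \ref{p:rexpbasic}, $\kappa^2 R(i\kappa)$ converges strongly by the spectral theorem, and when both exist the norm limit agrees with the strong limit; that immediately yields $\chi B_{-2,0}\chi=-\chi\Pe\chi$ for every admissible $\chi$, hence $B_{-2,0}=-\Pe$ as operators $\mch_c\to\mathcal D_{\loc}$.
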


\begin{proof}
By self-adjointness,  $\im \langle (P\mp i r^2) \phi, \phi\rangle = \mp r^2 \|\phi\|^2$ if $\phi \in  \mathcal H_{\text{c}}$, $r>0$. Hence
$\| R(e^{i\pi/4}r)\|_{\mch \to \mch} \leq r^{-2}$.   Combining  with Proposition \ref{p:rexpbasic} gives \eqref{eq:reb},  except for $B_{-2,0} = -\Pe$.
 
It remains to prove  $B_{-2,0} = -\Pe$. First observe that  $\|B_{-2,0}\|_{\mch \to \mch}\le 1$, because if $\phi, \ \psi \in \mch_c$, then $|\langle B_{-2,0}\phi,\psi\rangle| =  \lim_{r \to 0^+}|\langle r^2 R(e^{i\pi/4}r)\phi,\psi\rangle|    \le \|\phi\| \|\psi\|$, by  \eqref{eq:reb} and $\| R(e^{i\pi/4}r)\| \leq r^{-2}$. Next, the coefficient of $\lambda^{-2}$ in $(P- \lambda^2)R(\lambda) = I$ gives $PB_{-2,0}=0$ and thus $\Pe B_{-2,0} =B_{-2,0}.$

The coefficient of $\lambda^0(\log \lambda)^0$ in  $(P- \lambda^2)R(\lambda) = I$ gives $ P B_{00} - B_{-2,0} = I$, and applying $\Pe$ to both sides gives $\Pe B_{-2,0} = - \Pe$ once we show that $\Pe P B_{00} = 0$.

It remains to prove $\Pe P B_{00} = 0$. We deduce this from  $\phi \in \mch_c$, $\psi \in \mcg_{-2}$ $\Longrightarrow$ $\langle P B_{00} \phi, \psi \rangle = \langle  B_{00} \phi, P \psi \rangle =0 $ and to check the first equals sign we use Lemma \ref{l:bpid} to reduce the problem to proving $\lim_{r_1 \to \infty}\bpr(B_{00}\phi,\psi) = 0$. Now, by the coefficient of $z^0(\log z)^0$ in  \eqref{eq:lco} we have
\begin{equation}\label{eq:B00}
(1-\chi_1)B_{00} =    R_{00}( 1-\chi_1)   + \sum_{j=0}^1\sum_{k=0}^1 R_{2j,k} [\Delta,\chi_1]B_{-2j,-k}.
\end{equation}
Using the $R_{2j,k}$ formulas in \eqref{eq:R2jk}, the lack of $l \ge 0$ modes in the  expansion \eqref{e:fcexp}  of  $B_{-2,-1} \phi$  (from Lemma \ref{l:wsing}), and the lack of $l \ge -1$ modes in the expansion of   $B_{-2,0} \phi$ (from $\Pe B_{-2,0}=B_{-2,0}$), we see that $B_{00} \phi = O(|x|)$ and $\partial_r B_{00}\phi = O(1)$ as $|x| \to \infty$, and hence from the formula \eqref{eq:bdrypair} for $\bpr$ we have $\bpr(B_{00}\phi,\psi) = O(r_1^{-1})$,  as desired.
\end{proof} 

Like $B_{-2,0}$, all terms $B_{2j,k}$ we compute below have the form 
\begin{equation}\label{e:b2jksa}
 B_{2j,k } = \sum_{m=1}^{M} c_m U_m \otimes U_m, \qquad \text{for some } M \in \mathbb N, \ c_m \in \mathbb R, \ U_m\in \mathcal D_{\loc}.
\end{equation}
Note that  if  $k=-1$ or $k_0(j)$, then  the $\kappa^{2j}(\log \kappa)^{k}$ coefficient of $\langle R(i\kappa) \phi, \psi \rangle = \langle \phi , R(i\kappa)\psi \rangle$ for $\kappa>0$ gives $\langle B_{2j,k} \phi, \psi \rangle = \langle \phi , B_{2j,k} \psi \rangle$, and hence \eqref{e:b2jksa} follows provided we have additionally $k \ne0$.

In Section \ref{s:pres} we show that $B_{-2,k}$ with $k \le -1$ is nonzero if and only if $\mcg_{-1} \ne \mcg_{-2}$, and describe such $B_{-2,k}$ in terms of elements of $\mcg_{-1}\setminus \mcg_{-2}$  (i.e. $p\,$-resonant states). In Section \ref{s:sres}, we show that $B_{0,1}$ is nonzero if and only if $\mcg_0 \ne \mcg_{-1}$ or $\mcg_{-2} \ne \mcg_{-3}$, and describe it in terms of elements of $(\mcg_0 \setminus \mcg_1)\cup (\mcg_{-2}\setminus \mcg_{-3})$ (i.e. $s$-resonant states and eigenfunctions of slowest  decay). We also show that $B_{0,k} = 0$ for $k \ge 2$ always. In Section \ref{s:presshift} we simplify the leading negative powers of $\log \lambda$.

\subsection{Contributions of \textit{p}-resonances}\label{s:pres} 

The main result of this section is the following proposition, which computes $B_{-2,-1}$.  We use the Fourier series notation of \eqref{e:fcexp}.

\begin{prop} \label{p:b-2-1} Let $M =\dim(\mcg_{-1}/\mcg_{-2})$. If $M=0$ then $B_{-2,-k}=0$ for all $k \ge 1$. Otherwise, $M \le 2$ and there exist $\{w_m\}_{m=1}^M$ an orthormal set in $\mathbb C^2$ and corresponding $U_{w_m} \in \mcg_{-1}$ with $v_{-1}(U_{w_m})=w_m$ such that 
\[
 B_{-2,-1} = \frac 1 \pi \sum_{m=1}^{M} U_{w_m} \otimes U_{w_m}.
\]
 
\end{prop}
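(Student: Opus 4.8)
The plan is to extract $B_{-2,-1}$ from the resolvent expansion by pairing against test functions and exploiting self-adjointness, then to identify its range with $\mcg_{-1}/\mcg_{-2}$ using the boundary pairing machinery of Section~\ref{s:bp}. First I would record that $M \le 2$ is already known: since $\dim(\mcg_{-1}/\mcg_{-2}) = \dim \mathcal F_{-1}'$ and Corollary~\ref{c:ubF's} gives $\dim \mathcal F_{-1}' + \dim \mathcal F_1' \le 2$. Next, from Lemma~\ref{l:wsing} applied with $j_0 = 1$ (so $-2j_0 = -2$), every $B_{-2,k}\phi$ lies in $\mcg_{-1}$; combined with $B_{-2,0} = -\Pe$ having range $\mcg_{-2}$, this shows the $B_{-2,-k}$ for $k \ge 1$ take values in $\mcg_{-1}$, and in fact in a complement of $\mcg_{-2}$ after subtracting off eigenfunction contributions. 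If $M = 0$, i.e. $\mcg_{-1} = \mcg_{-2}$, then $B_{-2,-k}\phi \in \mcg_{-1} = \mcg_{-2}$ for all $\phi$; one then argues as in the proof of $B_{-2,0} = -\Pe$ (using $(P - \lambda^2)R(\lambda) = I$ to get $P B_{-2,-k} = 0$ and pairing against eigenfunctions via Lemma~\ref{l:bpid}) that the $\lambda^{-2}(\log\lambda)^{-k}$ coefficient must vanish, giving $B_{-2,-k} = 0$.

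Assume now $M \ge 1$. The key step is to establish the symmetric tensor form \eqref{e:b2jksa} for $B_{-2,-1}$. Since $j_0 = 1$ here, $k = -1$ equals $k_0(-j_0)$ in the notation preceding \eqref{e:b2jksa} precisely when $k_0(-1) = -1$; more robustly, one checks directly that extracting the $\kappa^{-2}(\log\kappa)^{-1}$ coefficient from $\langle R(i\kappa)\phi,\psi\rangle = \langle \phi, R(i\kappa)\psi\rangle$ (valid for $\kappa > 0$ small by self-adjointness) forces $\langle B_{-2,-1}\phi,\psi\rangle = \langle\phi, B_{-2,-1}\psi\rangle$, so $B_{-2,-1}$ is self-adjoint with finite-rank range inside $\mcg_{-1}$; spectral decomposition then yields $B_{-2,-1} = \sum_m c_m U_m \otimes U_m$ with $c_m$ real and $U_m \in \mcg_{-1}$. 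Then I would identify these $U_m$ and the constants. Using the Fourier expansion \eqref{e:fcexp}, each $U_m$ has $v_{-1}(U_m) = w_m \in \mathbb C^2$; I expect the modulo-$\mcg_{-2}$ classes $[U_m]$ to be linearly independent and to span $\mcg_{-1}/\mcg_{-2}$ — hence $M = \dim(\mcg_{-1}/\mcg_{-2})$ terms — which I would prove by showing that if $\langle B_{-2,-1}\phi, \psi\rangle = 0$ for all $\phi$ and some $\psi \in \mcg_{-1}$ implies $\psi \in \mcg_{-2}$. This last implication is where the boundary pairing enters: from the coefficient of $z^{-2}(\log z)^{-1}$ in \eqref{eq:lco} one derives an asymptotic formula relating $(1-\chi_1)B_{-2,-1}\phi$ to the Fourier modes of the relevant $\psi_k$'s, as in the proof of Lemma~\ref{l:wsing}, and Lemma~\ref{l:bdrypair} converts the pairing $\langle\phi,\psi\rangle$ into $\bp$ of harmonic functions, reading off the $l = \pm 1$ modes.

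The normalization — getting the orthonormal $\{w_m\}$, the matching $v_{-1}(U_{w_m}) = w_m$, and the universal constant $1/\pi$ — is the delicate bookkeeping step. I would compute it by a boundary-pairing argument parallel to the end of the proof of Lemma~\ref{l:A01}: apply the resolvent identity or Lemma~\ref{l:bpl} to a test function of the form $\phi = [\Delta,\chi_1](\text{harmonic } l = \pm 1 \text{ mode})$, extract the relevant coefficient, and use Lemma~\ref{l:bdrypair} together with the explicit free-resolvent asymptotics \eqref{eq:R2jk} (specifically the $R_{00}$ and $R_{01}$ kernels) to pin down $c_m$. The $l = 1$ Fourier modes of $R_{00}(x,y)$ contribute the $(2x\cdot y)/(4\pi|x|^2)$ term, and tracking this through gives the factor $1/\pi$ after accounting for the two angular modes $\cos\theta, \sin\theta$ bundled into the $\mathbb C^2$ vector $w_m$. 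The main obstacle I anticipate is precisely this constant-chasing: ensuring consistency between the complex-inner-product convention $a\cdot b = a^1\bar b^1 + a^2\bar b^2$ on $\mathbb C^2$, the real-symmetric tensor form, and the orthonormality of $\{w_m\}$ — i.e. verifying that the quadratic form $\sum_m U_{w_m}\otimes U_{w_m}$ is independent of which orthonormal basis of the (at most two-dimensional) space of realized $w$-vectors one picks, and that the overall scalar works out to $1/\pi$ rather than some other multiple. Everything else is a routine adaptation of arguments already deployed for $B_{-2,0}$, $B_{01}$, and $\widetilde B_{0,-1}$ earlier in the paper.
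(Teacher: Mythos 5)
Your overall shape (symmetric tensor form from self-adjointness, rank identification via boundary pairing, constant-chasing) is the right genre, but you are missing the central engine of the paper's proof, and your constant-chasing plan points at the wrong free-resolvent kernel.

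The paper's key tool is Vodev's identity applied with $\lambda = z e^{in\pi}$, i.e.\ \eqref{eq:vsym}. Extracting coefficients from it (Lemma~\ref{l:vodidnegpwr}) yields the quadratic relation
\[
 B_{-2,-1} = B_{-2,-1}[\Delta,\chi_1]R_{21}[\Delta,\chi_1]B_{-2,-1},
\]
i.e.\ \eqref{eq:B2-1s}, which is what makes everything else work: combined with the explicit computation $(R_{21}[\Delta,\chi_1]U_w)(x) = -\tfrac12 w\cdot x$ from \eqref{eq:R21U}, it shows (i) $B_{-2,-1}\phi$ is either $0$ or in $\mcg_{-1}\setminus\mcg_{-2}$ (Lemma~\ref{l:-2,k}), from which the $M=0$ case follows; and (ii) writing $B_{-2,-1} = \sum c_{m,m'} U_{w_m}\otimes U_{w_{m'}}$ and plugging in gives the matrix identity $\pi C^2 = C$, whence $\pi C = I$ once the rank is known. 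You do not mention this identity at all; without it, pinning down the exact constant $1/\pi$ by direct coefficient-matching would be a substantially harder computation with much more room for error. Relatedly, you say the constant should come out of the $R_{00}$ and $R_{01}$ kernels via their $l=\pm 1$ modes (the $(2x\cdot y)/(4\pi|x|^2)$ term), but this decays like $|x|^{-1}$ and is not what governs $B_{-2,-1}$; the relevant kernel is $R_{21}(x,y)=\tfrac{1}{8\pi}|x-y|^2$, whose $x\cdot y$ piece produces the factor $-\tfrac12 w\cdot x$ and ultimately the $1/\pi$.

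For the rank statement you gesture at ``showing that $\langle B_{-2,-1}\phi,\psi\rangle=0$ for all $\phi$ forces $\psi\in\mcg_{-2}$,'' but the content of Lemma~\ref{l:pressing} is the specific construction: one tests against $\phi_{w_0}=\chi\, w_0\cdot x$ with $\chi\in C_c^\infty(\mathbb R^2\setminus\mathcal B)$, $\chi\ge0$, $\chi\not\equiv0$, derives \eqref{e:z0z1} ($w_0\cdot w_1 = \tfrac1\pi\langle U_{w_0},\phi\rangle$, with $U_{w_1}=B_{-2,-1}\phi$), and concludes $w_0\cdot w_1 > 0$. That positivity is what makes the linear map from $\mcg_{-1}$ injective modulo $\mcg_{-2}$; your sketch does not identify the test function or where the strict inequality would come from. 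Finally, your proposed $M=0$ argument (pair against eigenfunctions, show a boundary-pairing limit vanishes as for $B_{-2,0}=-\Pe$) is plausible in principle but is not what the paper does, and would require new estimates on $B_{0,-k}\phi$ at infinity that you have not supplied; the paper's one-line deduction from Lemma~\ref{l:-2,k} plus $B_{-2,-k}\mathcal H_c\subset B_{-2,-1}\mathcal H_c$ is considerably cleaner.
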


Before proving Proposition \ref{p:b-2-1}, we prove several lemmas and identities. We begin by using Vodev's identity \eqref{e:vodevido} with $\lambda=z e^{in\pi}$, followed by  $R_0(z e^{in\pi})-R_0(z)=\sum_{j=0}^\infty in\pi R_{2j,1}z^{2j}$, to get
\begin{equation}\label{eq:vsym}
%R(z e^{in\pi})-R(z)= \{ 1-\chi_1-R(z e^{in\pi})[\Delta,\chi_1]\} (R_0(z e^{in\pi})-R_0(z))\{ 1-\chi_1+[\Delta,\chi_1]
%R(z) \} .
\begin{split}
R(&z e^{in\pi})-R(z)= i n \pi \Big( 1-\chi_1-R(z e^{in\pi})[\Delta,\chi_1] \Big) \Big(\sum_{j=0}^\infty R_{2j,1} z^{2j}\Big)\Big(  1-\chi_1+[\Delta,\chi_1]
R(z) \Big) .
\end{split}
\end{equation}

In the next lemma we extract coefficients corresponding to negative powers of $\log z$ from \eqref{eq:vsym}.

\begin{lem}\label{l:vodidnegpwr}
Let $-1 \le j $ and $1 \le k' \le k$. Then
\begin{equation}\label{e:b2j2kcomm}
B_{2j,-k}= \sum_{j_1+j_2+j_3 = j} B_{2j_1,-k'} [\Delta,\chi_1]R_{2j_2,1}[\Delta,\chi_1] B_{2j_3,k' -k -1}
% B_{2j,-k}= \sum_{j_0=0}^{j+2} \sum_{j'=-1}^{j-j_0+1} B_{2j',-k'} [\Delta,\chi_1]R_{2j_0,1}[\Delta,\chi_1] B_{2(j-j_0-j'),k' -k -1}.
\end{equation}
For $j=-1$, \eqref{e:b2j2kcomm} simplifies to
\begin{equation}\label{e:b-2-1comm}
B_{-2,-k} =B_{-2,-k'} [\Delta,\chi_1]R_{21}[\Delta,\chi_1] B_{-2,k'-k-1},
\end{equation}
In particular,  $B_{-2,-k}\mch_c \subset B_{-2,-k'}\mch_c$. 
\end{lem}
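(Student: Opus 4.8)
The plan is to extract, from the power-series-in-$z$ identity \eqref{eq:vsym}, the coefficient of a monomial $z^{2j}(\log z)^{-k}$ on each side and match. First I would substitute into \eqref{eq:vsym} the expansion \eqref{eq:expwo} for both $R(ze^{in\pi})$ and $R(z)$, remembering that replacing $z$ by $ze^{in\pi}$ turns $\log z$ into $\log z + in\pi$, so that $R(ze^{in\pi}) = \sum_{j,k} B_{2j,k}(\log z + in\pi)^k z^{2j}$; the left side of \eqref{eq:vsym} is then a difference of two such series. The crucial point is that on the right side the only $R(z)$-factor contributing negative powers of $\log z$ is the rightmost one, $\big(1-\chi_1+[\Delta,\chi_1]R(z)\big)$, while the factor $\big(1-\chi_1 - R(ze^{in\pi})[\Delta,\chi_1]\big)$ contributes via $R(ze^{in\pi})$, whose negative-$\log$-power coefficients are the same $B_{2j,k}$ (with $k<0$) up to the shift. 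Collecting the coefficient of $z^{2j}(\log z)^{-k}$ with $k\ge1$: on the left, since $(\log z + in\pi)^{-k}$ and $(\log z)^{-k}$ agree to leading order only after expansion, I need to be a little careful, but the standard bookkeeping (as in the derivation of \eqref{eq:sworst} and \eqref{eq:fworst}) shows the left side's $z^{2j}(\log z)^{-k}$ coefficient is $B_{2j,-k}$ minus contributions from $(\log z)^{-k'}$ with $k'<k$; matching against the right side and using the already-established relations for the larger (less negative) values of $-k'$ to cancel those extra terms, one is left precisely with \eqref{e:b2j2kcomm}, where the $[\Delta,\chi_1]R_{2j_2,1}[\Delta,\chi_1]$ comes from the middle factor $\sum_j R_{2j,1}z^{2j}$ and the outer $B$'s from the two resolvent factors, and the index $k'-k-1$ on the right $B$ records the $(\log z)^{-k}$ power split as $(\log z)^{-k'}\cdot(\log z)^{k'-k-1}$ after the $in\pi$ from the prefactor contributes one power.

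After establishing the general identity \eqref{e:b2j2kcomm}, the case $j=-1$ is immediate: in the sum $j_1+j_2+j_3=-1$ with each $j_i \ge -1$, the only possibility is $j_1 = j_3 = -1$, $j_2 = 1$, since $B_{2j,k}$ vanishes for $j<-1$ (the lowest power in \eqref{eq:reb}/\eqref{eq:expwo} is $\lambda^{-2}$). That yields \eqref{e:b-2-1comm} directly. For the final containment claim $B_{-2,-k}\mch_c \subset B_{-2,-k'}\mch_c$, I would simply read off from \eqref{e:b-2-1comm} that every vector in the range of $B_{-2,-k}$ is $B_{-2,-k'}$ applied to something, namely $B_{-2,-k'}\big([\Delta,\chi_1]R_{21}[\Delta,\chi_1]B_{-2,k'-k-1}\phi\big)$; one only needs to know that $[\Delta,\chi_1]R_{21}[\Delta,\chi_1]B_{-2,k'-k-1}\phi \in \mch_c$, which holds because $[\Delta,\chi_1]$ is compactly supported, $R_{21}$ maps into $H^2_{\loc}$, and composing with another compactly supported $[\Delta,\chi_1]$ lands back in $\mch_c$.

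The main obstacle I anticipate is the careful index bookkeeping when matching coefficients of $z^{2j}(\log z)^{-k}$: the substitution $\log z \mapsto \log z + in\pi$ in $R(ze^{in\pi})$ means that a single coefficient $B_{2j,-k}$ on the left is entangled with $B_{2j,-k'}$ for all $k' \le k$ (via binomial expansion of $(\log z + in\pi)^{-k'}$), and similarly the $R(ze^{in\pi})$-factor on the right mixes these. The clean way to handle this is to induct downward on $k$ (starting from the most negative power $k = k_0$-related bottom, or rather from $k$ large), using the inductive hypothesis to subtract off all the already-known lower-$|{-k'}|$ terms; this is exactly the strategy already used successfully in the proof of Lemma \ref{l:wsing}, so I would model the argument on that and simply remark that "the same bookkeeping as in the proof of Lemma \ref{l:wsing}" reduces \eqref{eq:vsym} to \eqref{e:b2j2kcomm}. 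I would also note that the factor of $in\pi$ is harmless since it is an invertible scalar that can be absorbed; comparing the $n=1$ and general $n$ versions (or just fixing $n=1$) suffices, as \eqref{eq:vsym} holds for each $n$ and the identity to be proved has no $n$ in it.
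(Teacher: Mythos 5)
Your overall plan (extract coefficients from \eqref{eq:vsym}) matches the paper's, but there are two genuine gaps.

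First, the derivation of the general identity \eqref{e:b2j2kcomm} is not correct as stated. You propose to expand $(\log z + in\pi)^{-k}$ binomially in inverse powers of $\log z$, collect the coefficient of $z^{2j}(\log z)^{-k}$, and clean up by downward induction on $k$. But this extraction produces only a single relation for each pair $(j,k)$, whereas \eqref{e:b2j2kcomm} is a \emph{family} of relations indexed by the free parameter $k'$ with $1 \le k' \le k$, and the later lemmas use distinct choices of $k'$ (e.g.\ both $k'=1$ and $k'=k$ in \eqref{eq:B0-k}). If you work it out for $k=2$ with $n$ fixed, the $(\log z)^{-3}$ coefficient gives only the \emph{sum} of the $k'=1$ and $k'=2$ identities, not each one. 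Your own accounting of where $k'$ comes from is also arithmetically off: $(\log z)^{-k'}\cdot(\log z)^{k'-k-1} = (\log z)^{-k-1}$, not $(\log z)^{-k}$, and the scalar $in\pi$ from the prefactor cannot ``contribute one power'' of $\log z$. What the paper actually does is rewrite the left side using the telescoping identity $a^{-k}-b^{-k}=(b-a)(a^{-k}b^{-1}+\cdots+a^{-1}b^{-k})$ with $a=\log z + in\pi$, $b=\log z$; both sides then become sums of monomials $a^{-k'}b^{-m}z^{2j}$ with $k',m\ge1$, matching the product structure of the right side factor by factor (the $(b-a)=-in\pi$ cancels the $in\pi$ prefactor), and one reads off \eqref{e:b2j2kcomm} for each $k'$ by comparing the $a^{-k'}b^{k'-k-1}z^{2j}$ terms. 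That algebraic identity is the ingredient you are missing.

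Second, your $j=-1$ specialization is incomplete. With $j_1,j_3\ge-1$ and $j_2\ge0$, the constraint $j_1+j_2+j_3=-1$ admits \emph{three} index triples, not one: $(-1,0,0)$, $(0,0,-1)$, and $(-1,1,-1)$. You asserted that only $(-1,1,-1)$ occurs. The $j_2=0$ terms do not vanish for free; they have to be killed. The paper does this by observing that $R_{01}[\Delta,\chi_1]B_{-2,k'-k-1}=0$ because $B_{-2,k'-k-1}$ maps $\mch_c$ into $\mcg_{-1}$ (Lemma \ref{l:wsing}) and then applying Lemma \ref{l:Ucommint}, and symmetrically that $B_{-2,-k'}[\Delta,\chi_1]R_{01}=0$ using self-adjointness of $P$ to transfer the argument to $B_{-2,-k'}^*$. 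Omitting this step leaves \eqref{e:b-2-1comm} unproved. Your final remarks on the range inclusion $B_{-2,-k}\mch_c\subset B_{-2,-k'}\mch_c$ and on the mapping properties of $[\Delta,\chi_1]R_{21}[\Delta,\chi_1]$ are fine once \eqref{e:b-2-1comm} is in hand.
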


\begin{proof} To prove \eqref{e:b2j2kcomm}, use $a^{-k}-b^{-k}=(b-a)(a^{-k}b^{-1}+\cdots+a^{-1}b^{-k})$ to extract the coefficient of $z^{2j} (\log z + in \pi)^{-k'}(\log z)^{k'-k-1}$ from \eqref{eq:vsym}. 

To get \eqref{e:b-2-1comm}, it is enough to show that if $j=-1$, then all terms in \eqref{e:b2j2kcomm} with $j_2=0$  vanish. Such terms have either $j_3=-1$ or $j_1=-1$. If $j_3=-1$, we have $R_{01} [\Delta,\chi_1]B_{-2,k-k'-1}=0$, by Lemma~\ref{l:Ucommint} and the fact that $B_{-2,k-k'-1}$ maps $\mch_c \to \mcg_{-1}$ by Lemma \ref{l:wsing}. %and \ref{l:0eig}. 
If $j_1=-1$, we have similarly $B_{-2,-k'} [\Delta,\chi_1]R_{01}=0$, because $B_{-2,-k}^*$  maps $\mch_c \to \mcg_{-1}$ since $P$ is self-adjoint.
\end{proof}
 
We shall also need to compute $R_{21}[\Delta,\chi_1]U_{w}$.
Since $(1-\chi_1)U_w$ has no $0$ Fourier modes, 
$$\langle (1-\chi_1)U_{w}, \Delta R_{21}(x,\bullet) \rangle_{|x|<r_1} = \langle (1-\chi_1)U_{w}, \frac 1 {2 \pi}\rangle_{|x|<r_1} =  0$$ so that by 
Lemma \ref{l:sGreens} $\langle[\Delta,\chi_1]U_{w}, R_{21}(x,\bullet)\rangle = \bp(U_{w},R_{21}(x,\bullet))$.
Then 
\begin{equation}\label{eq:R21U}
\begin{split}
(R_{21}[\Delta,\chi_1]U_{w})(x)&=  \langle[\Delta,\chi_1]U_{w}, R_{21}(x,\bullet)\rangle = \bp(U_{w},R_{21}(x,\bullet)) =-\bp(U_{w},  \frac{x\cdot \bullet}{4\pi})  = -\tfrac{1}{2} w\cdot x,
\end{split}
\end{equation} 
where we used the formula \eqref{eq:R2jk} for $R_{21}$, Lemma \ref{l:bdrypair} to compute  the
boundary pairing, and again the fact that $U_w$ has no $0$ Fourier modes for $|x|$ big enough.
%$l=0$ modes in \eqref{e:fcexp}.

\begin{lem}\label{l:-2,k}
 If $\phi \in \mch_c$ and $B_{-2,-1}\phi \ne 0$, then $B_{-2,-1} \phi  \in \mcg_{-1}\setminus \mcg_{-2}$.
\end{lem}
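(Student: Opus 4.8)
The plan is to prove the contrapositive: assuming $B_{-2,-1}\phi \in \mcg_{-2}$, I will deduce $B_{-2,-1}\phi = 0$. Two facts already established do most of the work. First, Lemma \ref{l:wsing} (with $j_0 = 1$, so that $k_0(-1) = 0$ in the expansion \eqref{eq:reb}) gives $B_{-2,-1}\phi \in \mcg_{-1}$. Second, the self-composition identity \eqref{e:b-2-1comm} of Lemma \ref{l:vodidnegpwr}, taken with $k = k' = 1$, reads
\[
 B_{-2,-1} = B_{-2,-1}[\Delta,\chi_1]\,R_{21}\,[\Delta,\chi_1]\,B_{-2,-1}.
\]

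The first step is a bookkeeping remark: for any $u \in \mcg_{-1}$ the Fourier expansion \eqref{e:fcexp} reduces to $u(r\cos\theta, r\sin\theta) = v_{-1}(u)\cdot(\cos\theta,\sin\theta)\,r^{-1} + O(r^{-2})$ near infinity (no constant term, no $\log$ term, no nonnegative modes), so $u \in \mcg_{-2}$ exactly when $v_{-1}(u) = 0$. Combined with the membership $B_{-2,-1}\phi\in\mcg_{-1}$, this turns the claim into the implication $v_{-1}(B_{-2,-1}\phi) = 0 \;\Longrightarrow\; B_{-2,-1}\phi = 0$.

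The second step puts $\psi := B_{-2,-1}\phi$ through the displayed identity applied to $\phi$, yielding $\psi = B_{-2,-1}[\Delta,\chi_1]R_{21}[\Delta,\chi_1]\psi$. Now the computation behind \eqref{eq:R21U} applies to $\psi$ — its derivation, via Lemmas \ref{l:sGreens} and \ref{l:bdrypair}, uses only that the argument lies in $\mcg_{-1}$, hence has vanishing $0$-th and $\log$ Fourier coefficients and vanishing nonnegative modes at infinity — and it gives $R_{21}[\Delta,\chi_1]\psi = -\tfrac12 v_{-1}(\psi)\cdot x$. Under the hypothesis $v_{-1}(\psi) = 0$ this is the zero function, so $\psi = B_{-2,-1}[\Delta,\chi_1]\cdot 0 = 0$, which is what we wanted.

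I do not expect a genuine obstacle here; the argument is short once Lemmas \ref{l:wsing} and \ref{l:vodidnegpwr} and the formula \eqref{eq:R21U} are available. The one place that deserves an explicit sentence is the observation that \eqref{eq:R21U}, although displayed above for the distinguished solutions $U_w$, holds for an arbitrary element of $\mcg_{-1}$ by the same proof, since nothing in that proof uses more than membership in $\mcg_{-1}$.
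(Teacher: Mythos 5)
Your proposal is correct and follows essentially the same route as the paper's proof: apply Lemma \ref{l:wsing} to place $B_{-2,-1}\phi$ in $\mcg_{-1}$, use the self-composition identity \eqref{e:b-2-1comm} with $k=k'=1$, and observe via the computation \eqref{eq:R21U} that $R_{21}[\Delta,\chi_1]$ kills elements of $\mcg_{-2}$. Your added remark that \eqref{eq:R21U} applies to any element of $\mcg_{-1}$ (not just the distinguished $U_w$) is a helpful clarification, but the argument itself is the same.
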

\begin{proof}
By
Lemma \ref{l:wsing},  $B_{-2,-1} \phi\in \mcg_{-1}$, so we must show that $B_{-2,-1} \phi \in \mcg_{-2}$ implies  $B_{-2,-1} \phi=0$.

%Let $\psi \in \mcg_{-2}$. By the formula for $R_{21}$ from \eqref{eq:R2jk}, the fact that $\chi_1$ depends only on $|x|$, and the expansion \eqref{e:harmexp} for $\psi$ with $L = -2$, we obtain
%\begin{equation}\label{e:r21ue}
%R_{21}[\Delta,\chi_1]\psi= \tfrac 1 {8\pi} \langle [\Delta,\chi_1]\psi, |x-\bullet|^2 \rangle
% =0,                                                                                                        
%\end{equation}
%because $\int_0^{2\pi} |x-(r \cos \theta,r \sin \theta)|^2 e^{il\theta} d \theta=0$ for $x\in \mathbb R^2$, $r >0$, $|l| \ge 2$. %Consequently, since $B_{-2,0} = - \Pe$, we obtain $R_{21}[\Delta,\chi_1]B_{-2,0} = B_{-2,0} [\Delta,\chi_1] R_{21}=0$, and \eqref{eq:B2-1s0} becomes 
By \eqref{e:b-2-1comm}, we have
\begin{equation}\label{eq:B2-1s}
  B_{-2,-1}=  B_{-2,-1}[\Delta,\chi_1]R_{21}[\Delta,\chi_1]B_{-2,-1}.
\end{equation}
But if  $B_{-2,-1} \phi \in \mcg_{-2}$ then $R_{21}[\Delta,\chi_1]B_{-2,-1} \phi=0$ by  \eqref{eq:R21U}, and so  \eqref{eq:B2-1s} implies $B_{-2,-1} \phi=0$.
\end{proof}
 
 %The next lemma follows from a simple computation.   \begin{lem}\label{l:bdrypair}
 %Suppose for $|x|>r_0$, 
 %$$\psi(|x|(\cos \theta,\sin \theta))= \sum _{j=1}^\infty |x|^{-j}(c_{j,+,\psi}e^{ij\theta}+ c_{j,-,\psi}e^{-ij\theta}) + c_{0,\psi}+c_{l,\psi} \log |x| + |x|(\alpha_{+,\psi}e^{i\theta}+\alpha_{-,\psi}e^{-i\theta})
 %$$
 %and  $U$ has the same sort of expansion, with $c_{j,\pm ,\psi}$ replaced by $c_{j,\pm, U}$, and similarly for the other coefficients.
 %Then if $r_1>r_0$,
 %\begin{multline*}
 %\frac{1}{2\pi}\int_{r=r_1} \left( \partial_r U \psi-U\partial_r \psi \right)dS = -\frac{1}{2\pi}\bp(U,\overline{\psi})
 %\\= 2\left( -c_{1,+,U}\alpha_{-,\psi}-c_{1,-,U}\alpha_{+,\psi}+ \alpha_{+,U}c_{1,-,\psi}+ \alpha_{-,U}c_{1,+,\psi}
 %\right) +c_{l,U}c_{0,\psi}-c_{0,U}c_{l,\psi}.
 %\end{multline*}
 %\end{lem}

Now we show that if $\mcg_{-1} \ne \mcg_{-2}$, then there is a nontrivial term $B_{-2,-1}$ in the expansion  (\ref{eq:reb}).  
\begin{lem}\label{l:pressing}
We have $\rank B_{-2,-1} = \dim (\mcg_{-1} / \mcg_{-2})$.
\end{lem}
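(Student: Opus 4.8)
The plan is to show $\rank B_{-2,-1} = \dim(\mcg_{-1}/\mcg_{-2})$ by exhibiting two inequalities. For the bound $\rank B_{-2,-1} \le \dim(\mcg_{-1}/\mcg_{-2})$, I would use Lemma \ref{l:wsing} to know that $B_{-2,-1}$ maps $\mch_c$ into $\mcg_{-1}$, combined with Lemma \ref{l:-2,k}, which says every nonzero value $B_{-2,-1}\phi$ actually lies in $\mcg_{-1}\setminus\mcg_{-2}$. Thus the range of $B_{-2,-1}$ is a subspace of $\mcg_{-1}$ whose only element of $\mcg_{-2}$ is $0$, so it injects into the quotient $\mcg_{-1}/\mcg_{-2}$ and $\rank B_{-2,-1}\le\dim(\mcg_{-1}/\mcg_{-2})$.

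For the reverse inequality, the point is that $B_{-2,-1}$ must ``see'' every $p$-resonant state. Here I would use self-adjointness together with the coefficient identities from $(P-\lambda^2)R(\lambda)=I$, and more importantly the symmetry of $B_{-2,-1}$ noted in \eqref{e:b2jksa}: since $-1 = -1$ is the most negative power of $\log\lambda$ at $\lambda^{-2}$ (or by the $\langle R(i\kappa)\phi,\psi\rangle=\langle\phi,R(i\kappa)\psi\rangle$ argument), $B_{-2,-1}$ is self-adjoint as an operator $\mch_c \to \mch_{\loc}$. A self-adjoint operator has rank equal to the dimension of its range, and the range equals the orthogonal complement (within an appropriate pairing) of its kernel. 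So it suffices to show: if $u \in \mcg_{-1}$ and $\langle \phi, u\rangle_\mch = 0$ for all $\phi$ in the range of $B_{-2,-1}$ — equivalently $B_{-2,-1}u = 0$ if we extend suitably, but more carefully $u \perp \operatorname{ran} B_{-2,-1}$ — then $u \in \mcg_{-2}$. Concretely I would take $u \in \mcg_{-1}$, apply a boundary-pairing identity (Lemma \ref{l:bpl} or the coefficient extraction used for Lemma \ref{l:pubd}) to relate $\langle \phi, u\rangle$ to boundary pairings of $R(\lambda)\phi$ with $u$, expand $R(\lambda)\phi$ using \eqref{eq:reb}, and pick out the coefficient of $\lambda^{-2}(\log\lambda)^{-1}$ (or the appropriate leading term) to get that $\langle\phi,u\rangle$ is governed by $\bp$ of $B_{-2,-1}\phi$-type terms against $u$. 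Using \eqref{eq:R21U}, which computes $R_{21}[\Delta,\chi_1]U_w = -\tfrac12 w\cdot x$ and shows this vanishes exactly when the $v_{-1}$-coefficient vanishes, I can conclude that the pairing of the range of $B_{-2,-1}$ with $u$ detects $v_{-1}(u)$; if this pairing is identically zero then $v_{-1}(u)=0$, i.e. $u \in \mcg_{-2}$.

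An alternative and perhaps cleaner route for the lower bound: use \eqref{eq:B2-1s}, the identity $B_{-2,-1} = B_{-2,-1}[\Delta,\chi_1]R_{21}[\Delta,\chi_1]B_{-2,-1}$. This exhibits $B_{-2,-1}$ as an idempotent-like operator up to the middle factor; in particular it shows $\operatorname{ran} B_{-2,-1}$ is exactly the set of $U \in \mcg_{-1}$ fixed by the composition, and since $R_{21}[\Delta,\chi_1]$ kills $\mcg_{-2}$ (by \eqref{eq:R21U}) but is injective on $\mcg_{-1}/\mcg_{-2}$ (since $-\tfrac12 w\cdot x \ne 0$ for $w \ne 0$), one reads off that the rank of $B_{-2,-1}$ is at least, hence equal to, $\dim(\mcg_{-1}/\mcg_{-2})$. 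I would combine this with the surjectivity direction by constructing, for each independent class in $\mcg_{-1}/\mcg_{-2}$, a test function $\phi$ with $B_{-2,-1}\phi$ in that class — using Lemma \ref{l:bpl} or the Green's-formula pairing to show $\langle \phi, \cdot\rangle$ can be made to pick out any prescribed $v_{-1}$.

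The main obstacle I expect is the lower bound $\rank B_{-2,-1} \ge \dim(\mcg_{-1}/\mcg_{-2})$: while the upper bound follows almost formally from Lemmas \ref{l:wsing}, \ref{l:-2,k}, and \ref{l:-2,k}'s consequence, showing that $B_{-2,-1}$ actually \emph{hits} every $p$-resonant direction requires carefully connecting the abstract operator coefficient $B_{-2,-1}$ to the asymptotic data $v_{-1}$ of null-space elements. The cleanest mechanism for this is the self-adjointness of $B_{-2,-1}$ plus the identity \eqref{eq:B2-1s}: self-adjointness forces $\operatorname{ran}B_{-2,-1}$ and $\ker B_{-2,-1}$ to be ``orthogonal,'' and \eqref{eq:B2-1s} together with \eqref{eq:R21U} forces any $u \in \mcg_{-1}$ in $\ker B_{-2,-1}$ to have $v_{-1}(u) = 0$. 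Getting the pairing bookkeeping exactly right — i.e. verifying that $\langle \phi, u\rangle_\mch$ really is computed by the relevant boundary pairing of $B_{-2,-1}$-terms — is where the care is needed, but it parallels the computations already carried out in the proofs of Lemmas \ref{l:pubd} and \ref{l:-2,k}.
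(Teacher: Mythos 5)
Your upper bound is exactly the paper's: Lemma \ref{l:-2,k} shows the (necessarily finite-dimensional) range of $B_{-2,-1}$ injects into $\mcg_{-1}/\mcg_{-2}$, so $\rank B_{-2,-1}\le\dim(\mcg_{-1}/\mcg_{-2})$.

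For the lower bound you correctly identify the two ingredients the paper uses — a Green's-formula boundary pairing to relate $\langle\phi,u\rangle$ to $B_{-2,-1}\phi$, and the computation \eqref{eq:R21U} showing $R_{21}[\Delta,\chi_1]$ detects $v_{-1}$ — but the argument as sketched has a gap, and one of your proposed shortcuts does not work. The identity \eqref{eq:B2-1s}, $B_{-2,-1}=B_{-2,-1}[\Delta,\chi_1]R_{21}[\Delta,\chi_1]B_{-2,-1}$, is an identity of the form $B=BTB$ and is perfectly consistent with $B=0$; the injectivity of $R_{21}[\Delta,\chi_1]$ on $\mcg_{-1}/\mcg_{-2}$ tells you nothing unless you already know the range of $B_{-2,-1}$ is large, so this route alone cannot give a lower bound. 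Likewise, the ``range $\perp$ kernel'' heuristic from self-adjointness is slippery here because the range of $B_{-2,-1}$ consists of $p$-resonant states which decay like $|x|^{-1}$ and therefore are not in $L^2(\Real^2)$, so there is no ambient Hilbert space in which to take the orthogonal complement.

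What actually closes the gap in the paper is the following explicit chain, which you gesture at but do not carry out. Starting from $PB_{00}=I-\Pe$ (the $\lambda^0$ coefficient of $(P-\lambda^2)R(\lambda)=I$) and $\Pe\phi=0$, Lemma \ref{l:bpid} gives $\bp(U_{w_0},B_{00}\phi)+\langle U_{w_0},\phi\rangle=0$. Then the expansion \eqref{eq:B00} of $B_{00}$ together with \eqref{eq:R21U} shows the leading large-$|x|$ term of $B_{00}\phi$ is $-\tfrac12 w_1\cdot x$ with $U_{w_1}=B_{-2,-1}\phi$, and Lemma \ref{l:bdrypair} evaluates the boundary pairing to yield $w_0\cdot w_1=\tfrac1\pi\langle U_{w_0},\phi\rangle$. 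Finally — and this is the step your proposal is missing — one must actually choose a test function: taking $\phi_{w_0}=\chi\,w_0\cdot x$ with $\chi\ge 0$, $\chi\not\equiv 0$, supported away from $\mathcal B$ makes $\langle U_{w_0},\phi_{w_0}\rangle>0$, so $w_0\cdot w_1>0$, hence $B_{-2,-1}\phi_{w_0}\notin\mcg_{-2}$; the linear map $U_{w_0}\mapsto B_{-2,-1}\phi_{w_0}$ then has kernel inside $\mcg_{-2}$, proving $\dim(\mcg_{-1}/\mcg_{-2})\le\rank B_{-2,-1}$. Your instinct that ``the pairing bookkeeping is where the care is needed'' is right, but that bookkeeping, and in particular the choice of $\phi_{w_0}$ producing a strictly positive pairing, is precisely the content of the proof, not a detail to be waved at.
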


\begin{proof}
First, Lemma \ref{l:-2,k} implies that $\rank B_{-2,-1} \le \dim (\mcg_{-1} / \mcg_{-2})$. 

To prove that $\dim (\mcg_{-1} / \mcg_{-2}) \le \rank B_{-2,-1}$, we will construct a linear map $L\colon \mcg_{-1} \to B_{-2,-1}\mch_c$ with $\ker L \subset \mcg_{-2}$. Let $U_{w_0} \in \mcg_{-1}$ and $\phi \in \mch_c$ with $\Pe\phi=0$.
 Applying Lemma \ref{l:bpid} with $r_1$ large enough that $\phi(x) = 0$ when $|x| \ge r_1$, and using $PB_{00}=I-\Pe$ (from the $\lambda^0$ coefficient of $(P-\lambda^2)R(\lambda)=I$), gives
 \begin{equation}\label{eq:pstatebp}
0 = \langle P U_{w_0}, B_{00}\phi\rangle  = \bp(U_{w_0},B_{00}\phi)  +  \langle U_{w_0}, \phi \rangle.
 \end{equation}  
To compute $\bp(U_{w_0},B_{00}\phi)$, note that  the $R_{2j,k}$ formulas \eqref{eq:R2jk} imply that as $|x| \to \infty$ we have $R_{01}[\Delta,\chi_1]B_{0,-1}\phi =O(1)$ and $ R_{00}\left( 1-\chi_1+[\Delta,\chi_1]B_{00}\right)\phi=O(\log |x|)$. Furthermore $ B_{-2,0} \phi = -\Pe \phi =0$ by our choice of $\phi$.
Thus, with  $U_{w_1} = B_{-2,-1}\phi$, the formula \eqref{eq:B00} for $B_{00}$  and \eqref{eq:R21U}  yield
\[
B_{00}\phi =R_{21} [\Delta,\chi_1]B_{-2,-1}\phi + O(\log |x|) = - \tfrac 12 w_1 \cdot x + O(\log|x|), 
\]
Hence, by Lemma \ref{l:bdrypair}, $\bp(U_{w_0},B_{00}\phi) = - \pi w_0 \cdot  w_1$. Plugging into \eqref{eq:pstatebp} gives
\begin{equation}\label{e:z0z1}
 w_0 \cdot w_1 = \tfrac 1 \pi \langle U_{w_0}, \phi \rangle.
\end{equation}
Now given $U_{w_0} \in \mcg_{-1}$ with $w_0 \ne 0$, applying \eqref{e:z0z1} with  $\phi_{w_0} = \chi w_0\cdot x$, for some $\chi \in C_c^\infty(\mathbb R^2 \setminus \mathcal B)$, $\chi \ge 0$, $\chi \not \equiv 0$, yields  $w_0 \cdot w_1 > 0$. Thus, defining $L U_{w_0} = B_{-2,-1} \phi_{w_0}$ gives $\ker L \subset \mcg_{-2}$ as desired. 
 \end{proof}

\begin{proof}[Proof of Proposition \ref{p:b-2-1}]
 By
Lemma \ref{l:-2,k}, $B_{-2,-k}=0$ if $M  = 0$ and $k \ge 1$.

It remains to compute $B_{-2,-1}$ when $M \ge 1$. By \eqref{e:b2jksa} and  Lemma \ref{l:pressing},
\[
 B_{-2,-1} = \textstyle\sum_{m,m'=1}^{M} c_{m,m'}U_{w_m}\otimes U_{w_m'},
\]
for some constants $c_{m,m'}$. Further, by linearity, we may take the $w_m$ orthonormal. Using \eqref{eq:R21U} and its consequence $\langle [\Delta,\chi_1]w \cdot \bullet,U_{w'}\rangle = \bp(w \cdot \bullet, U_{w'}) = - 2 \pi w \cdot w'$ in 
  \eqref{eq:B2-1s} gives
$$\textstyle\sum_{m,m'=1}^{M} c_{m,m'}U_{w_m}\otimes U_{w_m'}=\pi  \textstyle\sum_{m,m',m''=1}^{M} c_{mm'}c_{m'm''} U_{w_m}\otimes U_{w_{m''}}.$$
Thus if we denote by $C$ the $M \times M$ matrix whose entries are the $c_{mm'}$, we have $\pi C^2=C$.   Since $C$ must have rank $M$ by Lemma \ref{l:pressing}, this means $\pi C=I$.
\end{proof}

\subsection {Contributions of \textit{s}-resonances}\label{s:sres}

We next compute the $B_{0k}$.  The main results of this section are Lemma  \ref{l:0k>1}, which shows that $B_{0k}=0$ for $k \ge 2$, and Proposition \ref{p:b01form}, which computes $B_{01}$ and shows that it is nontrivial  if and only if $P$ has an $s$-resonant state or a zero eigenfunction which decays more slowly than $|x|^{-3}$ as $|x|\rightarrow \infty$, i.e. if and only if $\mcg_0 \ne  \mcg_1$ or $\mcg_{-2} \ne \mcg_{-3}$.

\begin{lem} \label{l:B2kp} We have $B_{0k}\mch_c \subset \mcg_{-1}$ for $k \ge 2$ and $B_{01}\mch_c \subset \mcg_{0}$.
\end{lem}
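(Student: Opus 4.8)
The plan is to read off, from the identity $(P-\lambda^2)R(\lambda) = I$ in a neighbourhood of $\lambda = 0$, that $PB_{0k}=0$ for $k\ge 1$, and then to control the growth of $B_{0k}\phi$ at infinity by feeding the resolvent expansion into the left-multiplication identity \eqref{eq:lco}, exactly as in the proofs of Lemmas \ref{l:wsing}, \ref{l:vodidnegpwr} and Proposition \ref{p:resexpsa}. So first I would extract the coefficient of $\lambda^0(\log\lambda)^k$ from $(P-\lambda^2)R(\lambda)=I$: for $k\ge 2$ this gives $PB_{0k}=B_{-2,k}$, and for $k\ge 3$ the right side vanishes (since $B_{-2,k}=0$ for $k\ge 1$ only when $\mcg_{-1}=\mcg_{-2}$ — here I must be slightly careful: in general $B_{-2,k}$ can be nonzero, by Proposition \ref{p:b-2-1}, for $k=-1$, but for \emph{positive} $k$ the terms $B_{-2,k}$ with $k\ge 1$ do not occur in \eqref{eq:reb} at all, so $PB_{0k}=0$ for $k\ge 2$). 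Similarly the coefficient of $\lambda^0\log\lambda$ gives $PB_{01}=B_{-2,1}=0$. Thus $B_{0k}\phi$ lies in the nullspace of $P$ for every $k\ge 1$ and every $\phi\in\mch_c$.

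Next I would pin down the decay. Using \eqref{eq:lco} with a cutoff $\chi_1$ equal to $1$ near the support of $\phi$, substitute the free resolvent series \eqref{e:r0series} on the left and the expansion \eqref{eq:reb} for $R(z)$ on the right, and match the coefficient of $z^0(\log z)^k$. For $k\ge 2$ this yields a formula of the shape
\[
(1-\chi_1)B_{0k}\phi = R_{00}[\Delta,\chi_1]B_{0k}\phi + R_{01}[\Delta,\chi_1]B_{0,k-1}\phi + (\text{terms involving } B_{-2j,\cdot})
\]
together with the vanishing condition $\langle[\Delta,\chi_1]B_{0,k_{\max}}\phi,1\rangle=0$ coming from the highest power of $\log z$, where I take $k$ running down from the top. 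Plugging in the $R_{00}$ asymptotics \eqref{eq:R2jk} and invoking Lemma \ref{l:Ucommint} to identify the coefficient of $\log|x|$ with $c_{\log}$ of the next term, I get, inductively from the top down, that each $B_{0k}\phi$ with $k\ge 2$ is $O(\log|x|)$ and in fact (using Corollary \ref{c:g0glog} to rule out the simultaneous presence of a genuine $\log$ mode and an $s$-resonance) that $c_{\log}(B_{0k}\phi)=0$ and hence $B_{0k}\phi = O(|x|^{-1})$, i.e. $B_{0k}\phi\in\mcg_{-1}$. For $B_{01}$ the same matching at $z^0(\log z)^0$ and $z^0\log z$ in \eqref{eq:lco}, using that the $B_{-2j,\cdot}$ contributions have no nonnegative Fourier modes (Lemma \ref{l:wsing} and $\Pe B_{-2,0}=B_{-2,0}$, as in the proof of Proposition \ref{p:resexpsa}), shows $B_{01}\phi = O(1)$, so $B_{01}\phi\in\mcg_0$.

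The main obstacle I anticipate is the bookkeeping of which terms $B_{-2j,k}$ actually appear and what their Fourier expansions look like: I need the fact that $B_{-2,0}\phi=-\Pe\phi\in\mcg_{-3}$ has no modes of order $\ge -1$, that $B_{-2,-1}\phi\in\mcg_{-1}$ has no modes of order $\ge 0$, and that these are the only negative-$j$ terms that can feed into the $z^0$ coefficient of \eqref{eq:lco} through $R_{2j,k}[\Delta,\chi_1]B_{-2j',k'}$. Once those structural facts are in hand the argument is a routine top-down induction on the power of $\log z$, parallel to Lemma \ref{l:wsing}; the delicate point is simply ensuring that no $\log|x|$ term survives, which is where Corollary \ref{c:g0glog} does the real work.
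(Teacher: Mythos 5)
Your proposal is correct and follows essentially the same route as the paper: extract $PB_{0k}=0$ from $(P-\lambda^2)R(\lambda)=I$, then run a top-down induction on the power of $\log z$ in \eqref{eq:lco}, using Lemma \ref{l:Ucommint} and the $R_{2j,k}$ asymptotics to identify the $\log|x|$ coefficient with $c_{\log}$ of the next term, and Corollary \ref{c:g0glog} to kill it. The only minor overreach is your worry about $B_{-2,-1}$ feeding into the $z^0$ coefficient --- it does not (only $R_{21}[\Delta,\chi_1]B_{-2,0}$ can appear, and it vanishes by \eqref{eq:R21U}) --- but this is a detail you would have discovered in the writeup rather than a gap.
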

\begin{proof} The proof is very similar to that of Lemma \ref{l:wsing}. Let $k^\sharp = k_0(0)$, and assume $k^\sharp \ge 1$ as otherwise there is nothing to prove. Let $\phi \in \mch_c$ and $\psi_k = B_{0k}\phi$ for $k = 1,\dots,k^\sharp$. From the coefficient of $(\log \lambda)^k$ in $(P-\lambda^2)R(\lambda) = I$, $P\psi_k=0$. The coefficient of $(\log z)^{k}$   in \eqref{eq:lco} yields $\psi_k \in \mcg_{\log}$,
\begin{equation}\label{eq:b0k}
\psi_k=  R_{00}[\Delta, \chi_1]\psi_k + c_{\log}(\psi_{k-1})+\delta_{1k} R_{01}(1-\chi_1)\phi +O(|x|^{-N}),
\end{equation}
where we used   $R_{21}[\Delta, \chi_1]B_{-2,0}=0$  from \eqref{eq:R21U}, Lemma \ref{l:Ucommint}, and $\delta_{1k}$ is the Kronecker delta.

By \eqref{eq:b0k} with $k=k^\sharp+1$, we obtain $c_{\log}(\psi_{k^\sharp}) = 0$ and hence $\psi_{k^\sharp} \in \mcg_0$. If $k^\sharp=1$ we are done.

Suppose now $k^\sharp \ge 2$. By \eqref{eq:b0k} with $k= k^\sharp$, and using $\psi_{k^\sharp} \in \mcg_0$, Lemma \ref{l:Ucommint}, and the formula \eqref{eq:R2jk} for $R_{00}$, we obtain $\psi_{k^\sharp} = c_{\log}(\psi_{k^\sharp-1}) +O(|x|^{-1})$. By Corollary \ref{c:g0glog}, we cannot have both $\psi_{k^\sharp} \in \mcg_0\setminus\mcg_{-1}$ and $\psi_{k^\sharp-1} \in \mcg_{\log}\setminus\mcg_0$. Hence $ c_{\log}(\psi_{k^\sharp-1}) = 0$, giving 
$\psi_{k^\sharp} \in \mcg_{-1}$ and $\psi_{k^\sharp-1}\in \mcg_0.$

An inductive argument which repeats the steps above completes the proof.
\end{proof}

\begin{lem}\label{l:0k>1}
 If $k \ge 2$, then $B_{0k}=0$.
\end{lem}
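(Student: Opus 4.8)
The plan is to argue by contradiction. Since $B_{0k}=0$ for $k>k_0(0)$ by Proposition~\ref{p:resexpsa}, it suffices to show $k_0(0)\le 1$, so I would assume $k^\sharp:=k_0(0)\ge 2$ and derive a contradiction. By Lemma~\ref{l:B2kp} the top coefficient $B_{0k^\sharp}$ maps $\mch_c$ into $\mcg_{-1}$, and since $P=P^*$ and $k^\sharp=k_0(0)\ne 0$, \eqref{e:b2jksa} lets me write $B_{0k^\sharp}=\sum_m c_m\,U_m\otimes U_m$ with $c_m\in\Real\setminus\{0\}$ and the $U_m\in\mcg_{-1}$ linearly independent; the goal is to show this forces all $c_m=0$.

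First I would extract the most singular $\log$-coefficient from Vodev's identity \eqref{e:rzr0z}: apply it to an $f\in L^2_c(\Real^2)$ supported away from $\mathcal B$ and compare coefficients of $\lambda^0(\log\lambda)^{k^\sharp+1}$. The left side contributes $B_{0,k^\sharp+1}(1-\chi_1)f=0$. On the right side, $R_0(\lambda)$ carries at most one power of $\log\lambda$ at each order $\lambda^{2p}$, while $R(\lambda)$ carries at most $k^\sharp$ powers at order $\lambda^0$ and none below order $\lambda^{-2}$; a short check of the ways $\lambda^{2q}\lambda^{2p}=\lambda^0$ can arise shows the only term reaching $(\log\lambda)^{k^\sharp+1}$ is $-B_{0k^\sharp}[\Delta,\chi_1]R_{01}f$. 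Since $R_{01}f=-\tfrac1{2\pi}\int f$ is constant, $[\Delta,\chi_1]R_{01}f=-\bigl(\int f\bigr)w$ with $w=\tfrac1{2\pi}\Delta\chi_1$; taking $\int f\ne 0$ yields the first key fact, $B_{0k^\sharp}w=0$.

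Then I would upgrade $B_{0k^\sharp}w=0$ to $B_{0k^\sharp}=0$ following the strategy in the proof of Proposition~\ref{p:b-2-1}. The idea is to use the symmetrized identity \eqref{eq:vsym}, whose left side at order $z^0$ is a polynomial in $\log z$ of degree exactly $k^\sharp-1$ (the $(\log z)^{k^\sharp}$-contributions of $R(ze^{in\pi})$ and $R(z)$ cancel). Equating the $z^0$-coefficients of the top powers $(\log z)^m$, $m\ge k^\sharp$, and simplifying with the two identities already in hand — $R_{01}[\Delta,\chi_1]B_{0k^\sharp}=0$ (from Lemma~\ref{l:Ucommint}, since $B_{0k^\sharp}\phi\in\mcg_{-1}\subset\mcg_{\log}$ with $c_{\log}=0$) and $B_{0k^\sharp}w=0$ — should kill every contribution except those threading through $[\Delta,\chi_1]R_{21}[\Delta,\chi_1]B_{0k^\sharp}$, leaving a self-relation $B_{0k^\sharp}=G\,[\Delta,\chi_1]R_{21}[\Delta,\chi_1]B_{0k^\sharp}$ for a finite-rank $G$. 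Since \eqref{eq:R21U} evaluates $R_{21}[\Delta,\chi_1]$ on $\mcg_{-1}$ to the linear functions $U\mapsto-\tfrac12 v_{-1}(U)\cdot x$, one then closes the argument by the same linear-algebra step used at the end of the proof of Proposition~\ref{p:b-2-1} (nondegeneracy of $\bp$ on $\mathcal F_1\times\mathcal F_{-1}$, Lemma~\ref{l:bdrypair}, turning a quadratic relation $\pi C^2=C$ into $\pi C=I$), concluding $B_{0k^\sharp}=0$.

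The main obstacle is precisely this last step: carefully isolating, among all the terms produced by expanding \eqref{eq:vsym} at order $z^0$, the single $\log z$-coefficient that yields the self-relation, and checking that the higher powers of $\log z$ appearing at orders $z^{2j}$ with $j\ne 0$ (governed by the a priori unknown $k_0(j)$) do not contaminate it. A finish avoiding \eqref{eq:vsym} is a descent: repeatedly comparing coefficients of $\lambda^0(\log\lambda)^m$ in \eqref{e:rzr0z} for decreasing $m$, using $-\Delta R_{00}=\mathrm{id}$ to simplify $(1-\chi_1)+[\Delta,\chi_1]R_{00}$ and invoking $B_{0k^\sharp}w=0$ together with Lemma~\ref{l:B2kp}, to show inductively that $B_{0k^\sharp}$ annihilates all of $\mch_c$.
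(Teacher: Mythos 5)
Your step 1, while a correct calculation, extracts no new information. Since $k^\sharp=k_0(0)\ne 0$, \eqref{e:b2jksa} lets you write $B_{0k^\sharp}=\sum_m c_m\,U_m\otimes U_m$ with $U_m\in\mcg_{-1}$ (Lemma~\ref{l:B2kp}). Then $\langle w,U_m\rangle=\tfrac1{2\pi}\langle[\Delta,\chi_1]1,U_m\rangle=-\tfrac1{2\pi}\overline{\langle[\Delta,\chi_1]U_m,1\rangle}=\overline{c_{\log}(U_m)}=0$ by Lemma~\ref{l:Ucommint} and $[\Delta,\chi_1]^*=-[\Delta,\chi_1]$; so $B_{0k^\sharp}w=0$ is automatic. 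The entire burden of the proof is therefore your step 2, which you acknowledge is unresolved.

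Your step 2 as sketched also has a directional error. The linear-algebra closing move of Proposition~\ref{p:b-2-1} — deriving $\pi C^2=C$ and, using that $C$ has full rank, concluding $\pi C=I$ — is a \emph{nonvanishing} conclusion. Writing ``turning a quadratic relation $\pi C^2=C$ into $\pi C=I$, concluding $B_{0k^\sharp}=0$'' is self-contradictory: $\pi C=I$ would say $B_{0k^\sharp}\ne 0$, which is the opposite of what you need. If you could derive a homogeneous relation like $C=\pi C^2$ without the rank hypothesis, you would still only get $C$ idempotent (up to scaling), not $C=0$. So this route, even if the coefficient bookkeeping from \eqref{eq:vsym} were carried out, does not terminate in the desired conclusion. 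The alternative ``descent'' you mention produces at the next step $B_{0k^\sharp}f=-B_{0k^\sharp}[\Delta,\chi_1]R_{00}f+(\int f)B_{0,k^\sharp-1}w$; the first term does not vanish for obvious reasons, so this is also unfinished.

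The paper's proof is much more direct and avoids contradiction and linear algebra entirely. It extracts the coefficient of $(\log z+in\pi)^{k-2}\log z$ from \eqref{eq:vsym} — a coefficient tailored so that exactly one entry from each resolvent factor survives — and reads off the formula $B_{0k}=-B_{0,k-2}[\Delta,\chi_1]R_{01}[\Delta,\chi_1]B_{01}$ for $k\ge 3$, with a similar two-term formula for $k=2$. The crucial vanishing is then $R_{01}[\Delta,\chi_1]B_{01}=0$ — about $B_{01}$, not about $B_{0k^\sharp}$ — which holds because $B_{01}\mch_c\subset\mcg_0$ (Lemma~\ref{l:B2kp}) and $c_{\log}$ vanishes on $\mcg_0$ (Lemma~\ref{l:Ucommint}); the extra term in the $k=2$ case is killed by $\Pe[\Delta,\chi_1]R_{21}=0$ from \eqref{eq:R21U}. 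You identified the right raw materials (Lemmas~\ref{l:B2kp} and \ref{l:Ucommint}, \eqref{eq:R21U}, the symmetrized identity \eqref{eq:vsym}), but the composition $R_{01}[\Delta,\chi_1]B_{01}$ appearing as the ``weak link'' in a chain formula for $B_{0k}$ is the missing idea.
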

\begin{proof}
As in the proof of Lemma \ref{l:vodidnegpwr}, the $(\log z + in \pi)^{k-2}\log z$  coefficient  
 of  \eqref{eq:vsym} gives  $
B_{0k} =  -B_{0,k-2} [\Delta,\chi_1]R_{01}[\Delta,\chi_1] B_{01} + \delta_{k2} \{\Pe  [\Delta,\chi_1]R_{21} + (1 - \chi_1)R_{01}\}[\Delta,\chi_1] B_{01}$, where $\delta_{k2}$ is the Kronecker delta.
But Lemmas~\ref{l:Ucommint} and \ref{l:B2kp} imply  $R_{01}[\Delta,\chi_1] B_{01}=0$, and $\Pe [\Delta,\chi_1]R_{21} = 0$ by \eqref{eq:R21U}.
\end{proof}

Now that we know $B_{01}$ is the only possible nontrivial $B_{0k}$ with $k \ge 1$, it remains to compute it. 

\begin{prop}\label{p:b01form} There exists $U_0 \in \mcg_0$ such that 
\begin{equation}\label{e:b01form}
%\begin{split}
%  B_{01}&= B_{01,0} + B_{01,-2}, \qquad \text{ where}  \\
%B_{01,0} &= (1-\chi_1-B_{00}[\Delta,\chi_1]) R_{01}(1-\chi_1+[\Delta,\chi_1]B_{00}), \qquad B_{01,-2}= r_1^2\mathcal P_e {\bf 1}_{> r_1} \Pi_{-2} {\bf 1}_{> r_1} \mathcal P_e.
%\end{split}
B_{01} = - \frac 1 {2\pi} U_0 \otimes U_0 -\frac{\rho^2}{2}\mathcal P_e {\bf 1}_{> \rho} \Pi_{-2} {\bf 1}_{> \rho} \mathcal P_e,
\end{equation} 
where  $\Pi_{-2}$ denotes projection onto the $l= -2$ modes in \eqref{e:fcexp}, $ {\bf 1}_{> \rho}$  is the characteristic function  of   $\{x \colon |x| > \rho\}$ with  $\mathcal B \subset \{x \colon |x| > \rho\}$. 
Moreover, $U_0 = 0$ if $\mcg_0 \setminus \mcg_{-1} = \emptyset$, and $c_0(U_0) = 1$ otherwise.
\end{prop}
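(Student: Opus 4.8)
The plan is to compute $B_{01}$ by combining a symmetry/rank reduction, an expansion of $(1-\chi_1)B_{01}$ coming from the left Vodev identity \eqref{eq:lco}, and a boundary pairing computation analogous to the one in the proof of Lemma \ref{l:A01}. First I would use the self-adjointness observation following \eqref{e:b2jksa}: since $B_{01}$ is the coefficient of $\lambda^0(\log\lambda)^1$ and $1 = k_0(j)$ is the top power (by Lemma \ref{l:0k>1}), the identity $\langle R(i\kappa)\phi,\psi\rangle = \langle \phi, R(i\kappa)\psi\rangle$ forces $B_{01}$ to be self-adjoint, so it has the form $\sum_m c_m U_m \otimes U_m$ with $c_m \in \mathbb R$. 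By Lemma \ref{l:B2kp}, the range of $B_{01}$ lies in $\mcg_0$. The key structural point is that $\mcg_0/\mcg_{-1}$ is at most one-dimensional (by \eqref{e:dg0g-1}, or Corollary \ref{c:ubF's}), while the contributions from $\mcg_{-1}$ and $\mcg_{-2}$ need separate treatment; so I expect $B_{01}$ to split into a rank-one piece $-\frac{1}{2\pi}U_0\otimes U_0$ coming from a genuine $s$-resonance, plus a piece supported on $\mcg_{-2}$.

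The second step is to extract the $\lambda^0(\log\lambda)$ coefficient from \eqref{eq:lco} (as was done for $B_{00}$ in \eqref{eq:B00}): this gives a formula
\[
(1-\chi_1)B_{01} = R_{01}(1-\chi_1) + \sum_{j=0}^1\sum_{k=0}^1 R_{2j,k}[\Delta,\chi_1]B_{-2j,1-k},
\]
where $B_{-2,1-k}$ for the relevant indices is $B_{-2,1}=0$, $B_{-2,0}=-\Pe$. Using $R_{01} = -\frac{1}{2\pi}1\otimes 1$, the $R_{21}$ formula \eqref{eq:R2jk}, and \eqref{eq:R21U} with $B_{-2,0}\phi = -\Pe\phi$, one reads off the large-$|x|$ behavior of $B_{01}\phi$. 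When $\Pe\phi = 0$ the $R_{21}[\Delta,\chi_1]\Pe$ term drops and $B_{01}\phi(x) = -\frac{1}{2\pi}\langle(1-\chi_1+[\Delta,\chi_1]B_{00})\phi,1\rangle + \cdots$, identifying a single function $U_0 \in \mcg_0$ (defined up to $\mcg_{-1}$) with $c_0(U_0)=1$ as the generator of the rank-one part; when $\mcg_0 = \mcg_{-1}$ there is no such $U_0$ and we set $U_0 = 0$. The remaining contribution involves $R_{21}[\Delta,\chi_1]\Pe$, and since $R_{21}(x,y) = \frac{1}{8\pi}|x-y|^2$, pairing against an eigenfunction $\psi\in\mcg_{-2}$ (which has no modes of order $\ge -1$ and whose slowest term is the $l=-2$ mode) produces precisely the quadratic form $-\frac{\rho^2}{2}\Pe\mathbf 1_{>\rho}\Pi_{-2}\mathbf 1_{>\rho}\Pe$; here I would compute $\langle R_{21}[\Delta,\chi_1]\Pe\phi, \psi\rangle$ via Lemma \ref{l:sGreens} and Lemma \ref{l:bdrypair}, the boundary pairing of the $l=2$ part of $-\frac{1}{2}w\cdot x$-type terms against the $l=-2$ mode $v_{-2}(\psi)$ of $\psi$. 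The appearance of $\rho$ is an artifact of rewriting the $l=-2$ Fourier coefficient in terms of an $L^2$ projection over $\{|x|>\rho\}$, and independence of the choice of $\rho$ (subject to $\mathcal B \subset \{|x|>\rho\}$... i.e. $\{|x|<\rho\}\supset\mathcal B$) should be checked or noted.

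The third step is to pin down the coefficient $c = -\frac{1}{2\pi}$ of the rank-one part, exactly as in Lemma \ref{l:A01}: apply Lemma \ref{l:bpl} and extract the coefficient of the appropriate power of $\lambda$ and $\log\lambda$, getting an identity among $\bp(B_{01}\phi, \overline R_{\ast}(\bullet,0))$ and $\bp(B_{00}\phi, \overline R_{\ast}(\bullet,0))$; then use the test function $\phi = [\Delta,\chi_1]\log|x|$ with $\Pe\phi = 0$ (so the eigenfunction piece is invisible to this test), evaluate $B_{01}\phi = -2\pi c\, U_0$ and $B_{00}\phi = \log|x| + O(1)$ using Lemmas \ref{l:bpid}, \ref{l:bdrypair} and Corollary \ref{c:g0glog}, and solve for $c$. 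I expect the main obstacle to be the bookkeeping in the second step: correctly isolating the $\mcg_{-2}$-contribution from the $\mcg_{-1}$ and $\mcg_0$ contributions in the expansion of $(1-\chi_1)B_{01}$, and in particular verifying that the $R_{20}$ and $R_{00}$ terms contribute only lower-order ($O(|x|^{-1})$ or $O(\log|x|)$, hence boundary-pairing-irrelevant) pieces, and that the quadratic form genuinely lands on the $l=-2$ Fourier mode and not, say, on a cross term with $c_0$ of the eigenfunction (which vanishes since eigenfunctions have $c_0 = 0$). The self-adjointness reduction and Corollary \ref{c:g0glog} are what make the final answer clean.
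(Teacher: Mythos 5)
Your step 3 (pinning down $c=-\tfrac1{2\pi}$ via the test function $\phi=[\Delta,\chi_1]\log|x|$, Lemma \ref{l:bpl}, and Corollary \ref{c:g0glog}) and your reduction to the form $\sum_m c_m U_m\otimes U_m$ via self-adjointness and Lemma \ref{l:B2kp} are both sound and match the paper. But step 2, where you try to produce the $\mathcal P_e$-piece, contains a genuine error: the term $R_{21}[\Delta,\chi_1]\mathcal P_e$ is \emph{identically} zero, not just zero when $\mathcal P_e\phi=0$. Indeed, by \eqref{eq:R21U} we have $R_{21}[\Delta,\chi_1]U_w=-\tfrac12 w\cdot x$ where $w=v_{-1}(U_w)$; for $\mathcal P_e\phi\in\mcg_{-2}$ the coefficient $v_{-1}$ vanishes (eigenfunctions decay like $|x|^{-2}$ or faster), so $R_{21}[\Delta,\chi_1]\mathcal P_e\phi=0$ for every $\phi$. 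Hence the pairing $\langle R_{21}[\Delta,\chi_1]\mathcal P_e\phi,\psi\rangle$ that you propose to compute is zero, and your plan has no source for the operator $-\tfrac{\rho^2}{2}\mathcal P_e {\bf 1}_{>\rho}\Pi_{-2}{\bf 1}_{>\rho}\mathcal P_e$. There is a conceptual reason: $R_{21}(x,y)=\tfrac1{8\pi}|x-y|^2$ is quadratic in $x$, hence carries angular modes only up to $|l|=1$, and so cannot couple to the $l=\pm2$ Fourier modes that are the leading modes of an eigenfunction.

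The missing idea is to use the \emph{symmetric} identity \eqref{eq:vsym} rather than the one-sided \eqref{eq:lco}. Extracting the coefficient of $(\log z+in\pi)^0(\log z)^0z^0$ from \eqref{eq:vsym} gives the decomposition
\[
B_{01}=(1-\chi_1-B_{00}[\Delta,\chi_1])R_{01}(1-\chi_1+[\Delta,\chi_1]B_{00})\;-\;\mathcal P_e[\Delta,\chi_1]R_{41}[\Delta,\chi_1]\mathcal P_e,
\]
where now $R_{41}(x,y)=-\tfrac1{128\pi}|x-y|^4$ appears because \eqref{eq:vsym} involves a \emph{triple} product of series and the index combination $(j_1,j_2,j_3)=(-1,2,-1)$ brings $R_{41}$ into play, sandwiched between two factors of $B_{-2,0}=-\mathcal P_e$. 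This is impossible to see from \eqref{eq:lco} alone, since $R(z)$ has no $z^{-4}$ term and so $R_{41}$ never appears in the $z^0\log z$ coefficient there. The quartic kernel $R_{41}$ contains $(x\cdot y)^2$, carrying $l=\pm2$ modes, and that is exactly what couples to the leading modes of eigenfunctions; the computation you sketch (polar coordinates, Fourier modes, rewriting the $l=-2$ coefficient as an $L^2$ projection over $|x|>\rho$) then goes through, but applied to $R_{41}$ rather than $R_{21}$. You would also need to verify that all the other cross terms in the triple product vanish, which uses $R_{21}[\Delta,\chi_1]\mathcal P_e=0$ and $R_{01}[\Delta,\chi_1]\mcg_{\log}=\{0\}$ (Lemma \ref{l:Ucommint}).
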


\begin{proof} 1. From the coefficient of $(\log z + in\pi)^0(\log z)^0 z^0$ in \eqref{eq:vsym}, we get
\[\begin{split}%\label{eq:B01e1}
 B_{01}&= B_{01,0} + B_{01,-2}, \qquad \text{ where}  \\ B_{01,0}&=(1-\chi_1-B_{00}[\Delta,\chi_1]) R_{01}(1-\chi_1+[\Delta,\chi_1]B_{00}), \qquad B_{01,-2}=
    - \Pe[\Delta,\chi_1]R_{41}[\Delta,\chi_1] \Pe,
\end{split}\]
and we used $\Pe [\Delta,\chi_1]R_{21} = R_{21} [\Delta,\chi_1] \Pe= 0$, from \eqref{eq:R21U}. It remains to show that
\begin{equation}\label{e:b01pieces}
 B_{01,0} = - \frac 1 {2\pi} U_0 \otimes U_0, \qquad B_{01,-2}= 
    -\frac{\rho^2}{2}\mathcal P_e {\bf 1}_{> \rho} \Pi_{-2} {\bf 1}_{> \rho} \mathcal P_e.
\end{equation}

2. Now we turn to showing the first of \eqref{e:b01pieces}.    For $\phi \in \mch_c$, by  Lemma \ref{l:B2kp} $B_{01}\phi\in \mcg_0$,
so that by  Lemmas \ref{l:sGreens} and \ref{l:bdrypair} we find
$\langle   1, [\Delta,\chi_1]B_{01}\phi\rangle =0$.  Since $P$ is self-adjoint, $B_{00}^*=B_{00}+cB_{01}$ for some constant $c$.  We
 use   these and  the formula \eqref{eq:R2jk} for $R_{01}$ to write, for $\phi,\;\psi \in \mch_c$,
\begin{equation}\label{e:b010r1}
- 2\pi \langle B_{01,0} \phi, \psi \rangle = \langle (1-\chi_1+[\Delta,\chi_1]B_{00})\phi,1 \rangle \langle 1, (1-\chi_1+[\Delta,\chi_1]B_{00})\psi \rangle,
\end{equation}
By Lemma \ref{l:sGreens},
\[
 \langle [\Delta,\chi_1]B_{00})\phi,1 \rangle = - \langle (1-\chi_1)PB_{00}\phi,1 \rangle_{|x|<r_1} + \bpr(B_{00}\phi,1).
\]
Plugging in $PB_{00} =  I - \Pe$, and taking $r_1$ large enough that $\phi(x) = 0$ when $|x| \ge r_1$, gives
\[
 \langle (1-\chi_1+[\Delta,\chi_1]B_{00})\phi,1 \rangle =   \langle(1-\chi_1)\Pe \phi,1 \rangle + \bpr(B_{00}\phi,1) =  \bpr(B_{00}\phi,1),
\]
since $\Pe \phi$ contains no $l=0$ modes in \eqref{e:fcexp}. But, from the coefficient of $ \log \lambda$ in  Lemma \ref{l:bpl},
 \begin{equation}\label{eq:monday}
 \bp_{r_1}(B_{01}\phi,\overline{R}_{00}(\bullet, y_0))+ \bp_{r_1}(B_{00}\phi,\overline{R}_{01}(\bullet, y_0))=0,
 \end{equation}
where we used (by \eqref{eq:R21U})   $\bp_{r_1}(B_{-20}\phi,\overline{R}_{21}(\bullet, y_0))=0$. Using also the formulas \eqref{eq:R2jk} for $R_{00}$ and $R_{01}$, the fact (from Lemma \ref{l:B2kp}) that $B_{01}\phi \in \mcg_0$, and Lemma \ref{l:bdrypair} for  boundary pairing, gives 
\[
\bpr(B_{00}\phi,1) =- 2\pi \bp_{r_1}(B_{00}\phi,\overline{R}_{01}(\bullet, y_0)) = 2\pi \bpr (B_{01}\phi,\overline{R}_{00}(\bullet, y_0)) = - 2 \pi c_0(B_{01}\phi),
\]
and hence
\begin{equation}\label{e:b010c0}
 \langle B_{01,0} \phi, \psi \rangle = - 2 \pi c_0(B_{01}\phi) \bar c_0(B_{01}\psi).
\end{equation}
Since $B_{01}\phi \in \mcg_0$, \eqref{e:b010c0} shows $B_{01,0}=0$ when $\mcg_0 = \mcg_{-1}$ because in that case $c_0(B_{01}\phi)=0$ for all $\phi$. If, on the other hand,  $\mcg_0 \ne\mcg_{-1}$, then \eqref{e:b010r1} shows that there is $U_0 \in \mcg_0$ with $c_0(U_0) = 1$ such that $B_{01,0} = \beta U_0 \otimes U_0$ for some constant $\beta$. Plugging $B_{01,0}\phi = \beta\langle \phi ,U_0 \rangle U_0$, $c_0(B_{01,-2} \phi)=0$  into \eqref{e:b010c0}, and similarly for $\psi$, gives $\beta = - 2\pi \beta^2$, so it remains to show $\beta \ne 0$.

To show $\beta \ne 0$, follow the corresponding computation from our proof in the corresponding case covered in Section \ref{s:rs0}, equations \eqref{e:ba01phi} to \eqref{e:a00phi}. More specifically, we will show that $B_{01}\phi \ne 0$ for $\phi = [\Delta,\chi_1]\log|x|$. By \eqref{eq:monday}, it is enough to show that $B_{00} \phi = \log |x| + O(1)$, and this can be shown by using  \eqref{eq:B00} to show that $B_{00} \phi= O(\log |x|)$, followed by the fact that $\phi_1:=B_{00}\phi - (1-\chi_1)\log|x|$ is in the nullspace of $P$ and hence by Corollary \ref{c:g0glog} we have $\phi_1 = O(1)$. %. Then $B_{01} \phi = B_{01,0} \phi = C_s \langle \phi, U_s \rangle U_s = - 2\pi C_s U_s$, so that

3. To show the second of  \eqref{e:b01pieces},  let  $\phi_j\in \mch_c$, $\psi_j = [\Delta, \chi_1] \Pe \phi_j$, and write
\[
-\langle B_{01,-2} \phi_1, \phi_2 \rangle = -\langle R_{41} \psi_1, \psi_2 \rangle = \frac {1}{32\pi} \int_{\mathbb R^2 \times \mathbb R^2} (x \cdot y)^2 \psi_1(x)\overline{\psi_2(y)}\,dx\,dy,
\]
where  we used the formula \eqref{eq:R2jk} for $R_{41}$ and the fact that the expansion of $\psi_j$ as in \eqref{e:fcexp} has no  $l\geq 1$ terms. Next, using polar coordinates and taking $a_j, b_j$, such that $\Pe\phi_j = r^{-2}(a_j \cos2\theta + b_j \sin 2 \theta) + O(r^{-3})$,
 we write this as
\[
 \begin{split}
  \int_0^{2\pi}\!\!\!\int_0^{2\pi} \!\! \cos^2(\theta_1-\theta_2&)  (a_1 \cos2\theta_1 + b_1 \sin 2 \theta_1)(\overline{a_2} \cos2\theta_2 + \overline{b_2} \sin 2 \theta_2)\, d \theta_1 \, d \theta_2 \\  & \times \frac {1}{32\pi}\Big(\int_0^\infty r^3 [\partial_r^2 + \tfrac 1 r \partial_r,\chi_1] r^{-2} dr\Big)^2 
= \frac {\pi}4 (a_1 \overline{a_2} + b_1 \overline{b_2}).
 \end{split}
\] 

Meanwhile, 
\[\begin{split}
\langle \mathcal P_e &{\bf 1}_{> \rho} \Pi_{-2} {\bf 1}_{> \rho} \mathcal P_e \phi_1, \phi_2 \rangle =  
\int_{\rho}^\infty r^{-3}dr \times \\ &\int_0^{2\pi} (a_1 \cos2\theta + b_1 \sin 2 \theta)(\overline{a_2} \cos2\theta + \overline{b_2} \sin 2 \theta)d\theta 
= \frac{\pi}{2\rho^2}  (a_1 \overline{a_2} + b_1 \overline{b_2}),
\end{split}\] 
which implies $ \langle \Pe[\Delta,\chi_1]R_{41}[\Delta,\chi_1] \Pe \phi_1,\phi_2\rangle  = \frac{{\rho}^2}{2}\langle \mathcal P_e {\bf 1}_{> \rho} \Pi_{-2} {\bf 1}_{> \rho} \mathcal P_e \phi_1, \phi_2 \rangle$, as desired.
\end{proof}

\subsection{Simplification of negative logarithmic powers}\label{s:presshift}
In this section we simplify some of the negative powers of $\log \lambda$ in the resolvent expansion. Our first result shows, similarly to the first part of Theorem~\ref{t:resexp} but without requiring $\mcg_{-2} = \{0\}$, that these negative powers vanish when $P$ has an $s$-resonance but no $p$-resonance, i.e. when $\mcg_{0}\ne\mcg_{-1}$ but $\mcg_{-1} = \mcg_{-2}$.

\begin{prop}\label{p:noneglog} If $\mcg_{0}\ne \mcg_{-1}$ and $\mcg_{-1} = \mcg_{-2}$, then   \eqref{eq:reb} can be improved to
$$R(\lambda)=  - \Pe\lambda^{-2} + \sum_{j=0}^\infty \sum_{k=0}^{k_0(j)} B_{2j,k}\lambda^{2j}(\log \lambda)^k.$$ 
\end{prop}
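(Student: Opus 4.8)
The plan is to treat the $\lambda^{-2}$ coefficient and the $\lambda^{2j}$ coefficients ($j\ge 0$) separately, and to show that in each the negative powers of $\log\lambda$ disappear. The $\lambda^{-2}$ part is immediate from Proposition~\ref{p:b-2-1}: since $\mcg_{-1}=\mcg_{-2}$ we have $M=\dim(\mcg_{-1}/\mcg_{-2})=0$, so $B_{-2,-k}=0$ for all $k\ge 1$, and the $\lambda^{-2}$ part of \eqref{eq:reb} reduces to $-\Pe\lambda^{-2}$. It then remains to prove that $B_{2j,-k}=0$ for all $j\ge 0$ and all $k\ge 1$, which I would establish by induction on $j$, using the identity \eqref{e:b2j2kcomm} of Lemma~\ref{l:vodidnegpwr} with $k'=1$ as the main tool.

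For the base case $j=0$, fix $\phi\in\mch_c$ and $k\ge 1$ and put $\psi_k=B_{0,-k}\phi$. First I would check $\psi_k\in\mcg_{\log}$: the relation $P\psi_k=B_{-2,-k}\phi$ (from matching coefficients of $\lambda^0(\log\lambda)^{-k}$ in $(P-\lambda^2)R(\lambda)=I$), combined with $B_{-2,-k}=0$, gives $P\psi_k=0$, while extracting the coefficient of $z^0(\log z)^{-k}$ in \eqref{eq:lco} and discarding the terms carrying a factor $B_{-2,-k}$ or $B_{-2,-k-1}$ (both zero) leaves $(1-\chi_1)\psi_k=R_{00}[\Delta,\chi_1]\psi_k+R_{01}[\Delta,\chi_1]\psi_{k+1}$; since $[\Delta,\chi_1]\psi_k$ is compactly supported, the kernels \eqref{eq:R2jk} of $R_{00}$ and $R_{01}$ force $\psi_k=O(\log|x|)$. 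Now the hypothesis $\mcg_0\ne\mcg_{-1}$ enters through Corollary~\ref{c:g0glog}: since $\mcg_0\setminus\mcg_{-1}\ne\emptyset$ we get $\mcg_{\log}\setminus\mcg_0=\emptyset$, hence $\psi_k\in\mcg_0$ and in particular $c_{\log}(\psi_k)=0$. On the other hand, specializing \eqref{e:b2j2kcomm} to $j=0$, $k'=1$ and again dropping every term containing $B_{-2,-1}$ or $B_{-2,-k}$ leaves $B_{0,-k}=B_{0,-1}[\Delta,\chi_1]R_{01}[\Delta,\chi_1]B_{0,-k}$; since $R_{01}[\Delta,\chi_1]\psi_k=c_{\log}(\psi_k)=0$ by Lemma~\ref{l:Ucommint}, we conclude $\psi_k=0$, so $B_{0,-k}=0$ for every $k\ge 1$.

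For the inductive step, fix $j\ge 1$ and assume $B_{2j',-k}=0$ for all $k\ge 1$ and all $j'<j$ (the case $j'=0$ being the base case, and $j'=-1$ being Proposition~\ref{p:b-2-1}). Applying \eqref{e:b2j2kcomm} with $k'=1$ gives $B_{2j,-k}=\sum_{j_1+j_2+j_3=j}B_{2j_1,-1}[\Delta,\chi_1]R_{2j_2,1}[\Delta,\chi_1]B_{2j_3,-k}$ with $j_1,j_3\ge -1$ and $j_2\ge 0$. Every term with $-1\le j_3\le j-1$ vanishes by the inductive hypothesis (and Proposition~\ref{p:b-2-1} when $j_3=-1$); the case $j_3=j+1$ forces $j_1=-1$, $j_2=0$, so that term carries the factor $B_{-2,-1}=0$; and when $j_3=j$ we must have $(j_1,j_2)=(0,0)$, since $(j_1,j_2)=(-1,1)$ again produces $B_{-2,-1}=0$. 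Hence only the single term $B_{0,-1}[\Delta,\chi_1]R_{01}[\Delta,\chi_1]B_{2j,-k}$ survives, and this vanishes because $B_{0,-1}=0$ by the base case. Thus $B_{2j,-k}=0$, completing the induction and the proof.

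I expect the main obstacle to be the combinatorial bookkeeping of which terms of \eqref{e:b2j2kcomm} and of the coefficient extraction from \eqref{eq:lco} survive after the successive vanishing facts ($B_{-2,-k}=0$, then all $B_{0,-k}=0$) are fed in, and in particular arranging the induction so that the facts needed at each stage are already available; the conceptual point that makes everything collapse is the use of Corollary~\ref{c:g0glog} to convert the presence of an $s$-resonance into the identity $c_{\log}(B_{0,-k}\phi)=0$. A minor point to verify carefully is that $R_{01}[\Delta,\chi_1]$ acts as $c_{\log}(\cdot)$ on $\psi_k$, which is exactly Lemma~\ref{l:Ucommint} once $\psi_k\in\mcg_{\log}$ has been established.
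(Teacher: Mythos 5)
Your proof is correct and follows essentially the same approach as the paper: kill the $B_{-2,-k}$ via Proposition~\ref{p:b-2-1}, show the $B_{0,-k}$ vanish by placing their images in $\mcg_0$ (using the coefficient extraction from \eqref{eq:lco} together with Corollary~\ref{c:g0glog}) and then invoking Lemma~\ref{l:Ucommint} in the self-consistency relation from \eqref{e:b2j2kcomm}, and finally induct in $j$. The one cosmetic difference is that the paper first proves $B_{0,-1}=0$ and then derives $B_{0,-k}=0$ for $k\ge 2$ directly from $B_{0,-k}=B_{0,-1}[\Delta,\chi_1]R_{01}[\Delta,\chi_1]B_{0,-k}$, whereas you show $B_{0,-k}\phi\in\mcg_0$ (and hence $R_{01}[\Delta,\chi_1]B_{0,-k}\phi=0$) for each $k\ge 1$ separately; both are valid.
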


\begin{proof} We begin by showing that $B_{0,-1}=0$. By \eqref{e:b2j2kcomm} and Proposition \ref{p:b-2-1}, we have
\begin{equation}\label{e:b0-10}
 B_{0,-1} = B_{0,-1}[\Delta,\chi_1]R_{01}[\Delta,\chi_1]B_{0,-1}.
\end{equation}
But from the coefficient of $(\log z)^{-1}$ in \eqref{eq:lco},  
\begin{equation}\label{eq:Bjnegk}(1-\chi_1)B_{0,-1} = R_{01}[\Delta,\chi_1]B_{0,-2} +R_{00}[\Delta,\chi_1]B_{0,-1}.
\end{equation}
Hence, by the formulas \eqref{eq:R2jk} for $R_{01}$ and $R_{00}$, $B_{0,-1}\mch_c \subset \mcg_{\log}$. Since $\mcg_{0}\ne\mcg_{-1}$, by Corollary~\ref{c:g0glog} this implies $B_{0,-1}\mch_c \subset \mcg_{0}$. However, by Lemma \ref{l:Ucommint}, $R_{01} [\Delta,\chi_1] \mcg_0 = \{0\}$, and putting that into \eqref{e:b0-10} gives $B_{0,-1}=0$.

From $B_{0,-1}=0$ and \eqref{e:b2j2kcomm}, we get $B_{0,-k} =0$ for all $k \ge 2$. Finally, if there is $J \ge 0 $ such that $B_{2j,-k} = 0$ for all $j \le J$ and for all $k \ge 1$, then \eqref{e:b2j2kcomm} with $j = J+1$ shows that  $B_{2(J+1),-k}=0$. Hence, by induction, $B_{2j,-k}=0$ for all $j$ and for all $k \ge 1$.
\end{proof}

We next  simplify the leading negative powers of $\log \lambda$ in the cases  $\mcg_{0}= \mcg_{-1}$ or $\mcg_{-1} \ne \mcg_{-2}$. Since $\mcg_{-1} \ne \mcg_{-2}$ is the more complicated case, we begin with the simpler case $\mcg_0 = \mcg_{-1}=\mcg_{-2}$, i.e. the case of no resonance at zero. This result is analogous to the second part of Theorem \ref{t:resexp} but without the assumption $\mcg_{-2}= \{0\}$, just as Proposition \ref{p:noneglog} is analogous to the first part of Theorem \ref{t:resexp}.

\begin{prop} \label{p:easysum} Suppose $P$ has no resonance at zero, i.e. $\mcg_0 = \mcg_{-2}$. There exists $U_{\log} \in \mcg_{\log}$ with $c_{\log}(U_{\log}) = 1$,  such that, if $a= \gamma_0+c_0(U_{\log})$, then
$$\sum_{k=1}^\infty B_{0,-k} (\log \lambda)^{-k}= \sum_{k=1}^\infty(\log \lambda)^{-k} a^{k-1} B_{0,-1} =\frac{1}{ 2\pi(\log \lambda-a)} U_{\log} \otimes U_{\log}.$$
\end{prop}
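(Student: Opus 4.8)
The plan is to combine the commutator identity \eqref{e:b2j2kcomm} (which, for $j=0$, involves the leading free resolvent coefficient $R_{01}=-\frac 1{2\pi}1\otimes 1$) with a direct computation of the rank-one structure of $B_{0,-1}$, in the spirit of Lemma \ref{l:B01}. First I would record what \eqref{e:b2j2kcomm} gives at $j=0$: with $k'=1$ we get, for every $k\ge 2$,
\[
 B_{0,-k}=B_{0,-1}[\Delta,\chi_1]R_{01}[\Delta,\chi_1]B_{0,1-k}
 =-\tfrac{1}{2\pi}\big(B_{0,-1}[\Delta,\chi_1]1\big)\otimes\big(\overline{B_{0,1-k}^*[\Delta,\chi_1]1}\big),
\]
using $R_{01}=-\frac1{2\pi}1\otimes 1$ and self-adjointness to move the second factor. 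Thus each $B_{0,-k}$ is rank at most one, and by an easy induction all the $B_{0,-k}$ share the same range, spanned by a single vector $U_{\log}$; moreover $B_{0,-1}[\Delta,\chi_1]1$ is a nonzero multiple of $U_{\log}$ by the non-resonant hypothesis, which (as in Lemma \ref{l:pubd}) forces $\alpha_{00}\ne 0$, i.e. the negative-power terms are genuinely present.

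Next I would identify $U_{\log}$ and the geometric-series structure. From the coefficient of $(\log z)^{-k}$ in \eqref{eq:lco}, namely \eqref{eq:Bjnegk}-type identities, together with the $R_{2j,k}$ formulas \eqref{eq:R2jk} and Lemma \ref{l:Ucommint}, one sees $B_{0,-1}\mch_c\subset\mcg_{\log}$ and in fact, because $\mcg_0=\mcg_{-1}$ (so by Corollary \ref{c:g0glog} there is no $s$-resonance to interfere), $c_{\log}$ of the range is nonzero; normalize $U_{\log}\in\mcg_{\log}$ by $c_{\log}(U_{\log})=1$. Writing $B_{0,-1}=\beta\,U_{\log}\otimes U_{\log}$ (rank one, self-adjoint form via the symmetry argument of \eqref{e:b2jksa}), and using Lemma \ref{l:Ucommint} to evaluate $\langle[\Delta,\chi_1]U_{\log},1\rangle=-2\pi c_{\log}(U_{\log})=-2\pi$, the identity for $B_{0,-k}$ above collapses to a scalar recursion: each extra factor of $[\Delta,\chi_1]R_{01}[\Delta,\chi_1]$ multiplies by the scalar $-\frac1{2\pi}\langle[\Delta,\chi_1]U_{\log},1\rangle\cdot(\text{something}) = c_0(U_{\log})+\gamma_0=a$. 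Carrying this out — the relevant constant is $a=\gamma_0+c_0(U_{\log})$, exactly as in Lemma \ref{l:B01} via \eqref{e:ubasy} — gives $B_{0,-k}=a^{k-1}B_{0,-1}$, hence
\[
 \sum_{k=1}^\infty B_{0,-k}(\log\lambda)^{-k}=B_{0,-1}\sum_{k=1}^\infty a^{k-1}(\log\lambda)^{-k}=\frac{B_{0,-1}}{\log\lambda-a}.
\]
Finally I would pin down $\beta=\frac1{2\pi}$ by matching with Lemma \ref{l:B01}: resumming the full expansion \eqref{eq:reb} near $\lambda=0$ and comparing the most singular $(\log\lambda)^{-1}$ behaviour with the normal form \eqref{eq:smsingexp2}, whose $\widetilde B_{0,-1}=\frac1{2\pi}U_{\log}\otimes U_{\log}$ is computed there; alternatively, extract $\beta$ directly by pairing $B_{0,-1}$ against $[\Delta,\chi_1]\log|x|$ and using \eqref{eq:monday}-style boundary-pairing identities exactly as in the proof of Lemma \ref{l:A01}.

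The main obstacle I anticipate is bookkeeping the exact value of the scalar $a$ — i.e. showing the per-step multiplier in the recursion is precisely $\gamma_0+c_0(U_{\log})$ and not some other combination. This requires tracking the $\gamma_0$ from $R_{00}$ in \eqref{eq:R2jk} through \eqref{eqq}-type identities and the definition of $c_0$, together with the fact that $R_{01}[\Delta,\chi_1]$ annihilates $\mcg_0$ but \emph{not} $\mcg_{\log}$ (Lemma \ref{l:Ucommint}). The cleanest route is probably to avoid re-deriving $a$ from scratch and instead invoke Lemma \ref{l:B01} (case (A1)) together with the identification $\alpha_{00}^{-1}+\log z = c_0(U_{\log})+\gamma_0 = a$ from \eqref{e:ubasy}, so that the geometric series here matches the $(\log\lambda-\log z-\alpha_{00}^{-1})^{-k}$ series of \eqref{eq:smsingexp2} term by term; the role of this proposition is then simply to re-sum those terms into closed form and to observe the collapse $B_{0,-k}=a^{k-1}B_{0,-1}$, which is purely formal once rank-one-ness and the shared range are established.
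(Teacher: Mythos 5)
Your plan correctly identifies the rank--one structure of the $B_{0,-k}$ via the commutator identity \eqref{e:b2j2kcomm} and the inclusion $B_{0,-1}\mch_c \subset \mcg_{\log}$ via the $(\log z)^{-k}$ coefficients of \eqref{eq:lco}, which is indeed what the paper does. But there are two substantive gaps.

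First, the commutator identity does not give you the ratio $a=\beta_{k+1}/\beta_k$. With $k'=1$, the correct index in the last factor is $-k$ (not $1-k$), so \eqref{e:b2j2kcomm} at $j=0$ yields the \emph{fixed-point} identity $B_{0,-k}=B_{0,-1}[\Delta,\chi_1]R_{01}[\Delta,\chi_1]B_{0,-k}$, which after the rank-one substitution collapses to $2\pi\beta_1\beta_k=\beta_k$ and only pins down $\beta_1=\tfrac{1}{2\pi}$. Taking instead $k'=2$ shows $\beta_k/\beta_{k-1}$ is the constant $2\pi\beta_2$, so the geometric structure is there, but the \emph{value} $a=2\pi\beta_2$ is undetermined by \eqref{e:b2j2kcomm} alone: the operator $[\Delta,\chi_1]R_{01}[\Delta,\chi_1]$ sends $U_{\log}\mapsto\Delta\chi_1$, and the resulting scalar is $2\pi$, not $a$. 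The ``something'' in your multiplier is exactly where the argument stops working. What actually produces $a=\gamma_0+c_0(U_{\log})$ is a \emph{different} identity, namely \eqref{eq:expB0-k} (the $(\log z)^{-k}$ coefficient of \eqref{eq:lco}), paired against $U_{\log}$ via $\bp$; the $R_{00}$ term there contributes $-2\pi a$ and the $R_{01}$ term contributes $2\pi$, giving $\beta_{k+1}=a\beta_k$. This is not mere bookkeeping of a known scalar, it is a different mechanism.

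Second, your fallback --- ``invoke Lemma \ref{l:B01} together with \eqref{e:ubasy}'' --- is not available here. Proposition \ref{p:easysum} is stated in the setting of Section \ref{s:renobd}, without the mild-growth bound \eqref{e:uhpbd}; the hypothesis $\mcg_0=\mcg_{-2}$ explicitly permits $\mcg_{-2}\ne\{0\}$, i.e. a zero eigenvalue. But Lemma \ref{l:B01} (and all of Section \ref{s:rs0}, including the quantity $\alpha_{00}$ you invoke ``as in Lemma \ref{l:pubd}'') assumes \eqref{e:uhpbd}, which by Lemma \ref{l:no0eig} forces $\mcg_{-2}=\{0\}$. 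So this route only covers the subcase $\Pe=0$, which is precisely the case already handled by Theorem \ref{t:resexp}. Relatedly, your claim that $c_{\log}$ of the range of $B_{0,-1}$ is nonzero ``by Corollary \ref{c:g0glog}'' does not follow: that corollary only rules out the simultaneous presence of $\mcg_{\log}\setminus\mcg_0$ and $\mcg_0\setminus\mcg_{-1}$, and the latter is empty here, so it says nothing. In the paper this is instead obtained by a direct computation showing $c_{\log}(B_{0,-1}\Delta\chi_1)=1$, which uses the prior results $B_{-2,-k}=0$ (Proposition \ref{p:b-2-1}) and $B_{01}=-\Pe B_{01}\Pe$ (Proposition \ref{p:b01form}) to kill the would-be interfering terms even when $\Pe\ne 0$. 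You will need an argument of that kind, inside the Section \ref{s:renobd} framework, rather than an appeal to Section \ref{s:rs0}.
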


\begin{proof} 1. We will show that there exist $U_{\log} \in \mcg_{\log}$ and constants $\beta_k$ such that if $k \ge 1$ then
\begin{equation}\label{e:b0-kckUk}
B_{0,-k} = \beta_k U_{\log} \otimes U_{\log}.
\end{equation}
To show \eqref{e:b0-kckUk}, by \eqref{e:b2j2kcomm} applied twice, first with $j=0$ and $k'=1$,  and second with $j=0$ and $k'=k$, if $k \ge 1$ then
\begin{equation}\label{eq:B0-k}
B_{0,-k}=B_{0,-1}[\Delta,\chi_1]R_{01}[\Delta,\chi_1]B_{0,-k} = B_{0,-k}[\Delta,\chi_1]R_{01}[\Delta,\chi_1]B_{0,-1}.
\end{equation}
Hence \eqref{e:b0-kckUk} for general $k$ follows from  \eqref{e:b0-kckUk} for $k=1$, and by  \eqref{e:b2jksa} and $R_{01}[\Delta,\chi_1]\mathcal{P}_e=0$ it is enough to show that $B_{0,-1}\mch_c \subset \mcg_{\log}$. To get    $B_{0,-1}\mch_c \subset \mcg_{\log}$, observe that  the coefficient of $(\log z)^{-k}$ in \eqref{eq:lco} gives
\begin{equation}\label{eq:expB0-k}
(1-\chi_1)B_{0,-k}\phi =R_{00}[\Delta,\chi_1]B_{0,-k}\phi + R_{01}[\Delta,\chi_1]B_{0,-k-1}\phi,
\end{equation}
and hence $B_{0,-1}\mch_c \subset \mcg_{\log}$ follows from the formulas \eqref{eq:R2jk} for $R_{00}$ and $R_{01}$ together with $P B_{0,-1} = 0$ from the coefficient of $(\log \lambda)^{-1}$ in $(P-\lambda^2)R(\lambda)=I$.

2. We construct $U_{\log}$ such that $c_{\log}(U_{\log}) = 1$, and show that then $\beta_1=\frac 1 {2\pi}$. By Proposition~\ref{p:b-2-1}, $B_{-2,-k}=0$ for $k \ge 1$, and by Proposition \ref{p:b01form}, $B_{01}=-\mathcal{P}_eB_{01}\mathcal{P}_e$.  
Then the coefficient of $\log z$ in \eqref{eq:lco}, and the formulas \eqref{eq:R2jk} for $R_{01}$ and $R_{00}$, show that for any $\phi \in \mch_c$, 
$$0= R_{01}(1-\chi_1+[\Delta, \chi_1]B_{00})\phi +O(|x|^{-1}).$$ Hence, if $\phi \in \mch_c$, then $\langle (1-\chi_1+[\Delta, \chi_1]B_{00})\phi, 1 \rangle=0$ and  so
\begin{equation}\label{eq:zero}
R_{00}(1-\chi_1+[\Delta, \chi_1]B_{00})\phi=O(|x|^{-1}).\end{equation}
Now let $\phi_0 = \Delta \chi_1$ and $U_{\log} = B_{0,-1}\phi_0$. From \eqref{eq:B00}, and using \eqref{eq:zero} and Lemma \ref{l:Ucommint}, we have
\[
 B_{00} \phi_0 =  R_{01} [\Delta, \chi_1] B_{0,-1}\phi_0 + O(|x|^{-1}) = c_{\log} (B_{0,-1}\phi_0)  + O(|x|^{-1}) ,
\]
so with  $\phi_1 = 1-\chi_1 - B_{00} \phi_0$, we have $\phi_1 \in \mcg_{0} = \mcg_{-2}$, and hence $c_{\log} (U_{\log}) = 1$. To show that $\beta_1=\frac 1{2\pi}$, use Lemma \ref{l:sGreens} to see that calculate
\[ U_{\log}=B_{0,-1}\phi_0 = \beta_1 \langle \phi_0,U_{\log}\rangle U_{\log} = \beta_1 \langle [\Delta,\chi_1]1,U_{\log}\rangle U_{\log}=\beta_1 \bp (1,U_{\log})U_{\log}=2\pi\beta_1U_{\log}.
\]

3. It remains to show that $\beta_{k+1} = a \beta_k$ for all $k\geq 1$.   We replace $\phi$ by $\phi_0=\Delta\chi_1$ in \eqref{eq:expB0-k}  and  
take the boundary pairing of both sides of \eqref{eq:expB0-k} with $U_{\log}$ to get, since 
 $B_{0,-k} \phi_0 = \beta_k U_{\log}=\beta_k(\phi_0)U_{\log}$,
\begin{equation}\label{eq:bpns}
\beta_k \bp(U_{\log}, U_{\log}) = \beta_k \bp(R_{00}[\Delta,\chi_1]U_{\log}, U_{\log})+ \beta_{k+1}\bp(R_{01}[\Delta,\chi_1]U_{\log}, U_{\log}).
\end{equation}
By the definition \eqref{eq:bdrypair} of $\bp$, we have $\bp(U_{\log}, U_{\log}) =0$.  By Lemmas \ref{l:bdrypair} and \ref{l:Ucommint}, we have $\bp(R_{01}[\Delta,\chi_1]U_{\log}, U_{\log})=2\pi$.  
Using Lemma \ref{l:bdrypair} and writing 
$R_{00}[\Delta,\chi_1]U_{\log}= \log |x|- \gamma_0+O(|x|^{-1})$ yields
$\bp(R_{00}[\Delta,\chi_1]U_{\log},U_{\log})= -2\pi a$.  
Using these three in \eqref{eq:bpns} shows $\beta_{k+1} = a \beta_k$, as desired.
\end{proof}

For the $p$-resonance case $\mcg_{-1} \ne \mcg_{-2}$ we need the following boundary pairing calculations.

\begin{lem}\label{l:bplp}
Let $v,\; w\in \Complex^2$ be such that there are corresponding $U_v$, $U_w\in \mcg_{-1}$ satisfying $v_{-1}(U_v)=v$, $v_{-1}(U_w)=w$.  If $\chi_1(x)=0$ for  $|x|>r_1-1$, then
\begin{equation}\label{e:r21uwuv}
 \bpr(R_{21}[\Delta, \chi_1]U_w,U_v)=\pi w\cdot v,
\end{equation} 
\begin{equation}\label{e:r20uwuv}
\bpr (R_{20}[\Delta, \chi_1]U_w,U_v)=  \pi(\log r_1- \gamma_0) w \cdot v + O(r_1^{-1}).
\end{equation}
\end{lem}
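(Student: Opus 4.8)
The plan is to treat the two pairings separately: to reduce \eqref{e:r21uwuv} to the Fourier-coefficient pairing formula of Lemma~\ref{l:bdrypair}, and to obtain \eqref{e:r20uwuv} by a direct evaluation of the boundary integral from asymptotic expansions.

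For \eqref{e:r21uwuv} I would start from \eqref{eq:R21U}, which gives $R_{21}[\Delta,\chi_1]U_w(x)=-\tfrac{1}{2}\,w\cdot x$ \emph{exactly} --- a complex-linear, hence everywhere harmonic, function of $x$ whose only nonzero coefficient in the expansion \eqref{e:fcexp} is the order-$r^1$ coefficient $-\tfrac{1}{2} w$. Since $U_v\in\mcg_{-1}$ has $c_0(U_v)=c_{\log}(U_v)=0$ and vanishing coefficients at all orders $r^l$ with $l\ge0$, while $v_{-1}(U_v)=v$, applying Lemma~\ref{l:bdrypair} (valid once $r_1$ is large enough that $U_v$ is harmonic on $|x|>r_1-1$) leaves from the pairing formula only the $l=-1$ term $(-1)(-\tfrac{1}{2} w)\cdot v$, so $\tfrac{1}{2\pi}\bpr(R_{21}[\Delta,\chi_1]U_w,U_v)=\tfrac{1}{2}\,w\cdot v$, which is \eqref{e:r21uwuv}.

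For \eqref{e:r20uwuv} the function $h:=R_{20}[\Delta,\chi_1]U_w$ is \emph{not} harmonic near infinity (the kernel $R_{20}$ carries the factor $\log|x-y|$), so Lemma~\ref{l:bdrypair} does not apply and I would evaluate $\bpr(h,U_v)=\int_{|x|=r_1}\big(h\,\partial_r\overline{U_v}-(\partial_r h)\overline{U_v}\big)$ directly. First I would record the three lowest moments of $g:=[\Delta,\chi_1]U_w$: matching $R_{21}(x,y)=\tfrac{1}{8\pi}(|x|^2-2x\cdot y+|y|^2)$ against \eqref{eq:R21U} in powers of $x$ gives $\int g=\int|y|^2 g(y)\,dy=0$ and $\int y\,g(y)\,dy=2\pi w$ (the vanishing of $\int g$ also follows from Lemma~\ref{l:Ucommint}, since $c_{\log}(U_w)=0$). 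Feeding these moments together with the $R_{20}$ kernel asymptotics from \eqref{eq:R2jk} into $h(x)=\int R_{20}(x,y)g(y)\,dy$ yields $h(x)=-\tfrac{1}{2}\big(\log|x|-\gamma_0-\tfrac{1}{2}\big)(w\cdot x)+O(1)$ as $|x|\to\infty$, where the $O(1)$ term has angular frequencies $0$ and $2$ only and --- since the $\log|x|$ in the kernel multiplies the coefficients of $|x|^2$, $x\cdot y$, and $|y|^2$, two of which integrate to $0$ --- carries no logarithm. Writing $U_v(x)=(v\cdot\hat x)\,r^{-1}+(\text{terms of angular frequency}\ge2)$ for large $|x|$, I would insert both expansions into the boundary integral; orthogonality of distinct angular modes on $|x|=r_1$ kills all frequency-mismatched products, and $\int_0^{2\pi}(w\cdot\hat x)(\bar v\cdot\hat x)\,d\theta=\pi\,w\cdot v$, so once the $\log r_1$ contributions from $h$ and from $\partial_r h$ are combined,
\[
\bpr(h,U_v)=\tfrac{\pi}{2}\,(w\cdot v)\big[(\log r_1-\gamma_0-\tfrac{1}{2})+(\log r_1-\gamma_0+\tfrac{1}{2})\big]+O(r_1^{-1})=\pi(\log r_1-\gamma_0)\,w\cdot v+O(r_1^{-1}).
\]

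I expect the one genuinely delicate point to be the error estimate in \eqref{e:r20uwuv}: ensuring that nothing enters at order $r_1^{-1}\log r_1$. This relies on two facts from the computation above --- that $h$ has no $\log|x|$ term below order $r^1$ (because $\int g=\int|y|^2 g=0$), and that products of boundary data of mismatched angular frequency integrate to zero over $|x|=r_1$ --- and granting these, every contribution not already displayed is genuinely $O(r_1^{-1})$.
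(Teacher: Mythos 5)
Your proof is correct, and the essential ingredients match the paper's: the exact identity \eqref{eq:R21U} (which you also use to read off the vanishing moments $\int g = \int |y|^2 g = 0$ and $\int y\,g = 2\pi w$ of $g=[\Delta,\chi_1]U_w$), the $R_{20}$ kernel asymptotics from \eqref{eq:R2jk}, and Lemma~\ref{l:bdrypair}. For \eqref{e:r21uwuv} your argument is literally the paper's (it just spells out the Lemma~\ref{l:bdrypair} step that the paper leaves implicit in ``follows from \eqref{eq:R21U}'').

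For \eqref{e:r20uwuv} the organization differs slightly: the paper decomposes the kernel as $R_{20} = \widetilde R_{21} - \tfrac{1}{8\pi}\,x\cdot y + (\text{bounded remainder})$ with $\widetilde R_{21}(x,y) = (\log|x|-\gamma_0-1)R_{21}(x,y)$, then computes the boundary pairing of each piece separately, reusing $R_{21}[\Delta,\chi_1]U_w = -\tfrac12 w\cdot x$ and the formula $\langle[\Delta,\chi_1]U_w, x\cdot\bullet\rangle = 2\pi w\cdot x$; you instead first assemble the full asymptotic expansion $h(x) = -\tfrac12(\log|x|-\gamma_0-\tfrac12)(w\cdot x) + O(1)$ and then evaluate the boundary integral once. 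Both routes produce the same intermediate expansion (note $(\log|x|-\gamma_0-1)(-\tfrac12 w\cdot x) - \tfrac14 w\cdot x = -\tfrac12(\log|x|-\gamma_0-\tfrac12)(w\cdot x)$), so the difference is purely expository; the paper's version is slightly shorter because it leans on the already-proved \eqref{e:r21uwuv} to dispatch the $\log r_1$ piece. Your justification of the $O(r_1^{-1})$ error (moment vanishing kills the spurious $\log|x|$ at order $|x|^0$; the genuine $O(1)$ term is $r$-independent, of angular frequencies $0$ and $2$, so it pairs with $U_v$ only at order $r_1^{-2}$) is sound and if anything a bit more explicit than the paper's.
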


\begin{proof}
Equation \eqref{e:r21uwuv} follows from  \eqref{eq:R21U}. To prove \eqref{e:r20uwuv}, first note that from the resolvent kernel formulas \eqref{eq:R2jk}, putting $ \widetilde R_{21}(x,y) = (\log|x| - \gamma_0 - 1)R_{21}(x,y)$, we have
\[
 R_{20}(x,y) = \widetilde R_{21}(x,y) - \tfrac 1 {8\pi} x \cdot y + O(1), \qquad 
 \partial_r R_{20}(x,y) = \partial_r\Big(\widetilde R_{21}(x,y) - \tfrac 1 {8\pi} x \cdot y\Big) + O(|x|^{-1}).
\]
Next, from \eqref{e:r21uwuv} and the definition \eqref{eq:bdrypair} of $\bpr$,
\[
 \bpr( \widetilde R_{21}[\Delta,\chi_1]U_w,U_v) = \pi (\log r_1 - \gamma_0 - 1) w \cdot v  - r_1^{-1}\int_{|x|=r_1} (R_{21} [\Delta,\chi_1]U_w)\overline{U_v},
\]
while using \eqref{eq:R21U} again gives
\[
 \int_{|x|=r_1} (R_{21} [\Delta,\chi_1]U_w)\overline{U_v} = - \frac 1{2r_1^2} \int_{|x|=r_1} (w \cdot x)(x \cdot v)= - \frac {\pi}{2 }r_1 w \cdot v.
\]
By Lemma \ref{l:sGreens} and \eqref{eq:R21U} again,
\[
 \langle [\Delta, \chi_1]U_w, x \cdot \bullet \rangle = \bpr (U_w, x \cdot \bullet ) = 2 \pi w \cdot x,
\]
so that 
\[
 \bpr  (- \tfrac 1 {8\pi} \langle [\Delta, \chi_1]U_w, x \cdot \bullet \rangle, U_v) = \frac \pi 2 w \cdot v,
\]
which implies   \eqref{e:r20uwuv}.
\end{proof}

\begin{prop}\label{p:B-2negk}
If $\mcg_{-1} = \mcg_{-2}$, then $B_{-2,-k}=0$ for all $k\geq 1$.  If $\mcg_{-1} \ne \mcg_{-2}$, use the functions $U_{w_m}$ of Proposition \ref{p:b-2-1} to define
$$\alpha_m=\lim_{r_1 \to \infty} \Big(\frac{1}{\pi}\langle U_{w_m},U_{w_m}\rangle_{|x|<r_1} - \log r_1\Big).$$
Then, with  $M = \dim(\mcg_{-1} / \mcg_{-2})$, we have
$$\sum_{k=1}^\infty  (\log \lambda)^{-k} B_{-2,-k} = \frac{1}{\pi}\sum_{m=1}^{M}(\log \lambda -\gamma_0-\alpha_m)^{-1} U_{w_m}\otimes U_{w_m},$$ 
provided if $M=2$ that $U_{w_1}$ and $U_{w_2}$ are chosen such that 
\begin{equation}\label{eq:fortho}
\langle U_{w_1},U_{w_2}\rangle_{|x|<r_1} \to 0 \;\text{as $r_1 \to \infty.$}\end{equation}
\end{prop}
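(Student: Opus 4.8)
The plan is to treat the two cases of the proposition separately. When $\mcg_{-1}=\mcg_{-2}$ we have $M=0$, and then $B_{-2,-k}=0$ for all $k\geq 1$ is immediate from Proposition~\ref{p:b-2-1}, so assume from now on $\mcg_{-1}\neq\mcg_{-2}$, i.e. $1\le M\le 2$. The first step is to pin down the form of each $B_{-2,-k}$. By the last assertion of Lemma~\ref{l:vodidnegpwr}, $B_{-2,-k}\mch_c\subset B_{-2,-1}\mch_c=\operatorname{span}\{U_{w_1},\dots,U_{w_M}\}$; applying the same argument to adjoints (using self-adjointness of $P$, exactly as in the proof of Lemma~\ref{l:vodidnegpwr}) shows the range of $(B_{-2,-k})^*$ also lies in $\operatorname{span}\{U_{w_m}\}$. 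Hence $B_{-2,-k}=\sum_{m,m'=1}^{M}(C_k)_{mm'}\,U_{w_m}\otimes U_{w_{m'}}$ for an $M\times M$ matrix $C_k$, and extracting the $\lambda^{-2}(\log\lambda)^{-k}$ coefficient from $\langle R(i\kappa)\phi,\psi\rangle=\langle\phi,R(i\kappa)\psi\rangle$ (using uniqueness of the expansion \eqref{eq:reb}, as around \eqref{e:b2jksa}) shows each $C_k$ is Hermitian. Proposition~\ref{p:b-2-1} gives $C_1=\tfrac1\pi I$.

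Next I would turn the resolvent identity \eqref{e:b-2-1comm} into a multiplicative law for the $C_k$. Feeding $B_{-2,-\ell}\phi=\sum_{m,m'}(C_\ell)_{mm'}U_{w_m}\langle\phi,U_{w_{m'}}\rangle$ into $B_{-2,-k}=B_{-2,-k'}[\Delta,\chi_1]R_{21}[\Delta,\chi_1]B_{-2,k'-k-1}$, and using $R_{21}[\Delta,\chi_1]U_w=-\tfrac12\,w\cdot x$ from \eqref{eq:R21U} together with $\langle[\Delta,\chi_1](w\cdot\bullet),U_v\rangle=-2\pi\,w\cdot v$ (equivalently $\bpr(R_{21}[\Delta,\chi_1]U_w,U_v)=\pi\,w\cdot v$ from Lemma~\ref{l:bplp}) and the orthonormality $w_m\cdot w_{m'}=\delta_{mm'}$, yields $C_k=\pi\,C_{k'}C_{k+1-k'}$ for $1\le k'\le k$. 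Taking $k'=2$ gives the recursion $C_k=\pi\,C_2C_{k-1}$ for $k\ge 2$, hence $C_k=\pi^{\,k-2}C_2^{\,k-1}$ (and $C_1=\tfrac1\pi I$ is consistent). Thus everything reduces to computing the single Hermitian matrix $C_2$.

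To compute $C_2$, I would run the analogue of Step~3 of the proof of Proposition~\ref{p:easysum}. Extract the $\lambda^{0}(\log\lambda)^{-1}$ coefficient of \eqref{eq:lco}; since $R_0(\lambda)=R_{00}+R_{01}\log\lambda+R_{20}\lambda^2+R_{21}\lambda^2\log\lambda+\cdots$ and $R(\lambda)$ has the $\lambda^{-2}$ part $\lambda^{-2}\sum_\ell B_{-2,\ell}(\log\lambda)^\ell$, this coefficient contains the terms $R_{20}[\Delta,\chi_1]B_{-2,-1}$ and $R_{21}[\Delta,\chi_1]B_{-2,-2}$ alongside $B_{0,\cdot}$ and $R_{00},R_{01}$ terms. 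Pair this identity, evaluated on a test vector $\phi$, against $U_{w_{m'}}$ using $\bpr$. The $B_{0,\cdot}$ pieces and the $R_{00},R_{01}$ pieces are harmonic near infinity of growth $O(|x|\log|x|)$ at worst, with $l=\pm1$ Fourier data that annihilates against $U_{w_{m'}}$ in the boundary pairing by Lemma~\ref{l:bdrypair}; by Lemma~\ref{l:bplp}, $\bpr(R_{20}[\Delta,\chi_1]B_{-2,-1}\phi,U_{w_{m'}})=\pi(\log r_1-\gamma_0)\,v_{-1}(B_{-2,-1}\phi)\cdot w_{m'}+O(r_1^{-1})$ while $\bpr(R_{21}[\Delta,\chi_1]B_{-2,-2}\phi,U_{w_{m'}})=\pi\,v_{-1}(B_{-2,-2}\phi)\cdot w_{m'}$; and Lemma~\ref{l:bpid} (Green's formula) identifies the remaining $r_1$-dependent contributions with the truncated inner products $\langle U_{w_m},U_{w_{m'}}\rangle_{|x|<r_1}$. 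Letting $r_1\to\infty$ and matching the $\log r_1$-divergent and the finite parts should give, in the basis $\{U_{w_m}\}$, the relation $\pi C_2=\gamma_0 I+A$, where $A$ is the Hermitian ``renormalized Gram matrix'' with $A_{mm}=\alpha_m$ and, for $m\neq m'$, $A_{mm'}=\tfrac1\pi\lim_{r_1\to\infty}\langle U_{w_m},U_{w_{m'}}\rangle_{|x|<r_1}$ (this off-diagonal limit existing because its $\log r_1$-coefficient is proportional to $w_m\cdot w_{m'}=0$).

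Finally, since $A$ is Hermitian, a unitary change of the orthonormal basis $\{w_m\}$ of $\Complex^2$ — which replaces each $U_{w_m}$ by the corresponding linear combination and conjugates $A$ and every $C_k$ — diagonalizes $A$, and the vanishing of its off-diagonal entries is exactly condition \eqref{eq:fortho} when $M=2$ (there is nothing to arrange when $M=1$). In this basis $\pi C_2=\operatorname{diag}(\gamma_0+\alpha_1,\dots,\gamma_0+\alpha_M)$, so $C_k=\pi^{k-2}C_2^{k-1}=\tfrac1\pi\operatorname{diag}\big((\gamma_0+\alpha_m)^{k-1}\big)$ and $B_{-2,-k}=\tfrac1\pi\sum_m(\gamma_0+\alpha_m)^{k-1}U_{w_m}\otimes U_{w_m}$. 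Summing the geometric series — legitimate by the absolute, uniform convergence in \eqref{eq:reb}/\eqref{eq:expwo} — gives $\sum_{k\ge1}(\log\lambda)^{-k}B_{-2,-k}=\tfrac1\pi\sum_m\frac{1}{\log\lambda-\gamma_0-\alpha_m}U_{w_m}\otimes U_{w_m}$, as claimed. The main obstacle is the third step: carefully isolating from the $\lambda^0(\log\lambda)^{-1}$ identity the exact combination $\gamma_0+\alpha_m$, i.e. keeping track of every $\log r_1$-divergent boundary and interior contribution as $r_1\to\infty$, and then checking that the basis which diagonalizes $A$ is simultaneously the one in which all the $B_{-2,-k}$ are diagonal.
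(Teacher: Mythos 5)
Your proposal is correct in substance and arrives at the claim by a route that is close in spirit to, but organized differently from, the paper's. Both proofs use the matrix form of $B_{-2,-k}$ in the basis $\{U_{w_m}\}$, the boundary pairing computation based on Lemma~\ref{l:bplp} (including \eqref{e:r20uwuv}), the conversion of $\bp(B_{0,-k}\phi,U_{w_m})$ into the truncated inner product $\langle B_{-2,-k}\phi, U_{w_m}\rangle_{|x|<r_1}$ via Lemma~\ref{l:bpid}, and the geometric summation. The organizational difference: the paper constructs the orthogonal basis first (diagonalizing the renormalized Gram form $q$), and then derives the scalar recursion $\beta_{m,-k-1}=(\gamma_0+\alpha_m)\beta_{m,-k}$ directly from the $(\log z)^{-k}$ coefficient of \eqref{eq:lco} for \emph{every} $k$; you instead extract the purely algebraic multiplicative law $C_k=\pi\,C_{k'}C_{k-k'+1}$ from \eqref{e:b-2-1comm}, reduce everything to computing $\pi C_2=\gamma_0 I+A$ by a \emph{single} boundary pairing at $k=1$, and diagonalize the Hermitian renormalized Gram matrix $A$ at the end. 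Your reduction buys a cleaner global structure (the whole tower of $B_{-2,-k}$ is a matrix geometric series in one Hermitian datum), at the cost of importing the extra identity $C_k=\pi C_2 C_{k-1}$, which the paper's argument does not need.

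One inaccuracy: the assertion that ``extracting the $\lambda^{-2}(\log\lambda)^{-k}$ coefficient ... shows each $C_k$ is Hermitian'' fails for $k\ge 2$. The reflection $\lambda\mapsto-\bar\lambda$ used to express $R(\lambda)^*$ mixes different powers of $\log\lambda$, and the argument around \eqref{e:b2jksa} only applies at the extreme indices $k=-1$ or $k=k_0(j)$; for $j=-1$ that leaves only $k\in\{-1,0\}$. Indeed your own formula $\pi C_2=\gamma_0 I+A$ with $\gamma_0\notin\Real$ shows $C_2$ is \emph{not} Hermitian. Fortunately this claim is not used: the representation $B_{-2,-k}=\sum_{m,m'}(C_k)_{mm'}U_{w_m}\otimes U_{w_{m'}}$ requires only that the ranges of $B_{-2,-k}$ and $B_{-2,-k}^*$ lie in $\operatorname{span}\{U_{w_m}\}$, the latter following from the adjoint of \eqref{e:b-2-1comm} (with $k'=k$) together with $R_{21}^*=R_{21}$, $[\Delta,\chi_1]^*=-[\Delta,\chi_1]$, and $B_{-2,-1}^*=B_{-2,-1}$; and what your diagonalization step genuinely needs is that $A$ is Hermitian, which is immediate from its definition as a (renormalized) Gram matrix. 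So the conclusion stands, but the wording about Hermitian $C_k$ should be dropped.
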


\begin{proof} 1. Proposition \ref{p:b-2-1} already proves the first statement of the proposition. 

Next we show that if $M=2$ it is possible to choose $U_{w_1},\; U_{w_2}$ so that \eqref{eq:fortho} holds.  
We begin with  $U_{w_1}, \; U_{w_2}$ from Proposition  \ref{p:b-2-1}, recalling $\{w_1,\; w_2\}$ is an orthonormal set.  Define a map 
$\Complex^2 \rightarrow \mcg_{-1} $ by $v\mapsto U_v$, where if $v=c_1w_1+c_2w_2$, $U_v= c_1U_{w_1}+c_2 U_{w_2}$.
Define a quadratic form on $\Complex^2$ by 
$$q(v,v')= \lim_{r\rightarrow \infty} \left( \langle U_{v},U_{v'}\rangle _{|x|<r_1}-\pi v\cdot v' \log r_1 \right).$$
That the limit exists follows from our expansions of $U_{w_1}$, $U_{w_2}$ at infinity.   There is a self-adjoint operator 
$Q$ so that $q(v,v')=(Qv)\cdot v'$ for all $v,\;v'\in \Complex^2$.  Let $\tilde{w}_1$, $\tilde{w_2}$ be a basis of eigenvectors for $Q$
so that $\tilde{w}_n\cdot \tilde{w}_m=\delta_{nm}$, and then let $\tilde{w}_1, \; \tilde{w}_2$ be the new $w_1$, $w_2$.

The remainder of the proof is similar to Step 3 of Proposition \ref{p:easysum}.

2. Given $\phi\in \mch_c$ and $k\in \Natural, $ there are constants $\beta_{m,-k}=\beta_{m,-k}(\phi)$ such  that $B_{-2,-k}\phi = \sum_{m=1}^M \beta_{m,-k}U_{w_m}$, and we shall show that $\beta_{m,-k-1} = (\gamma_0 + \alpha_m) \beta_{m,-k}$. 

First, using Lemma \ref{l:bplp} and $w_m \cdot w_n = \delta_{mn}$,
\begin{equation}\label{eq:dbp} \bp(R_{21}[\Delta, \chi_1]B_{-2,-k-1}\phi,U_{w_m})=\pi \beta_{m,-k-1}.
\end{equation}
To  compute the left hand side of \eqref{eq:dbp}, note that if 
$\phi_1\in L^2_c(\Real^2)$ and $\phi_1(x)=0$ if $|x|>r_1-1$, then
$\bp( R_{01}\phi_1,U_{w_m})=0=\bp( R_{00}\phi_1,U_{w_m})$ by Lemma \ref{l:bdrypair} and the resolvent formulas \eqref{eq:R2jk}.  Using this and the 
coefficient of $(\log z)^{-k}$ from \eqref{eq:lco} yields that $\bp( B_{0,-k}\phi, U_{w_m})$ equals 
\begin{equation}\label{eq:Friday}
\bp( R_{21}[\Delta, \chi_1]B_{-2,-k-1}\phi, U_{w_m})+\bp( R_{20}[\Delta, \chi_1]B_{-2,-k}\phi, U_{w_m}).
\end{equation}
By Lemma \ref{l:bpid},
\[
\bp( B_{0,-k}\phi, U_{w_m}) = \langle   B_{-2,-k}\phi, U_{w_m}    \rangle_{|x|<r_1}  =
\sum_{n=1}^M\beta_{n,-k}\langle  U_{w_n}, U_{w_m}    \rangle_{|x|<r_1},
\] 
and combining with the results of Lemma \ref{l:bplp} gives that $\pi \beta_{m, -k-1}$ equals
$$\pi ( \gamma_0 - \log r_1) \beta_{m,-k} +  \sum_{n=1}^M\beta_{n,-k}\langle  U_{w_n}, U_{w_m}\rangle_{|x|<r_1} + O(r_1^{-1})\to \pi(\gamma_0 + \alpha_m) \beta_{m,-k}.$$
%The limits exist   because, using $ U_{w_m}(x) = w_m \cdot x/|x|^2 + O(|x|^{-3})$, we have   $\langle U_{w_m},U_{w_{n}}\rangle_{\rho < |x|<r_1} = \pi ( w_m \cdot w_{n} ) \log ( r_1/\rho ) + O(\rho^{-3})$, uniformly as  $\rho$ and $r_1$ tend to $+\infty$ with $\rho < r_1$.

3. Hence
\[\begin{split}
\sum_{k=1}^\infty  &(\log \lambda)^{-k} B_{-2,-k} \phi 
= \sum_{k=1}^\infty \sum_{m=1}^M (\log \lambda)^{-k} \beta_{m,-k}U_{w_m} \\ &= \sum_{k=1}^\infty \sum_{m=1}^M (\log \lambda)^{-k} (\gamma_0+\alpha_m)^{k-1}\beta_{m,-1}U_{w_m} = \sum_{m=1}^M \frac{ \beta_{m,-1}U_{w_m}}{\log \lambda - \gamma_0 - \alpha_m}.
\end{split}\]
The proof is completed by using the explicit expression for $B_{-2,-1}$ from Proposition \ref{p:b-2-1}.
\end{proof}

\subsection*{Acknowledgments} The authors are grateful to Maciej Zworksi for his encouragement and helpful conversations over the course of this project. The BIRS conference ``Mathematical aspects of the physics with non-self-adjoint operators'' provided motivation for considering resolvent expansions for non-self-adjoint operators, and Maciej Zworski pointed out the connection with \cite{ADH}.
We also gratefully
acknowledge partial support from Simons Collaboration Grants for Mathematicians.  KD was, in addition, 
 partially supported by NSF grant DMS-1708511.

%\subsection{Statements and declarations}
%Partial financial support was received by both authors from the Simons  Foundation via Collaboration Grants for Mathematicians.  KD also received partial financial support from the National Science Foundation grant DMS-1708511. The authors have no further financial or non-financial interests to disclose.  There is no data or code associated with this manuscript.  The authors contributed equally.

\end{document}